\newcommand{\X}{\texttt{X}}
\newcommand{\Pref}{\mathrm{Pref}}
\newcommand{\Sym}{\mathfrak{S}}
\def\O{\mathcal{O}}
\def\inv{^{-1}}
\def\ci{\subseteq}
\newcommand{\lucky}{\mathsf{lucky}}  
\newcommand{\LPF}{\mathsf{LuckyPF}}  
\newcommand{\PF}{\mathrm{PF}}
\newcommand{\Lucky}{\mathsf{Lucky}}
\newcommand{\al}[1]{\begin{align*} #1 \end{align*}}
\theoremstyle{definition} 
\newtheorem{theorem}{Theorem}[section]
\newtheorem{corollary}{Corollary}[theorem]
\newtheorem{example}[theorem]{Example}
\newtheorem{lemma}[theorem]{Lemma}
\newtheorem{definition}[theorem]{Definition}
\theoremstyle{remark}
\newtheorem*{remark}{Remark}
\newcommand{\NN}{\mathbb{N}}
\newcommand{\out}{\mathcal{O}}
\newcommand{\bsy}{\boldsymbol}
\title{Enumerating Vector Parking Functions and their Outcomes Based on Specified Lucky Cars}
\author[Ferreri]{Melanie Ferreri}
\address[M.~Ferreri]{Department of Mathematics, William \& Mary, Williamsburg, VA 23187}
\email{\textcolor{blue}{\href{mailto:email}{mjferreri@wm.edu}}}
\author[Harris]{Pamela E. Harris}
\address[P.~E.~Harris]{Department of Mathematical Sciences, University of Wisconsin-Milwaukee, Milwaukee, WI 53211}
\email{\textcolor{blue}{\href{mailto:peharris@uwm.edu}{peharris@uwm.edu}}}
\author[Martinez]{Lucy Martinez}
\address[L.~Martinez]{Department of Mathematics, Rutgers University, Piscataway, NJ 08854}
\email{\textcolor{blue}{\href{mailto:lucy.martinez@rutgers.edu}{lucy.martinez@rutgers.edu}}}
\author[Swartz]{Eric Swartz}
\address[E.~Swartz]{Department of Mathematics, William \& Mary, Williamsburg, VA 23187}
\email{\textcolor{blue}{\href{mailto:easwartz@wm.edu}{easwartz@wm.edu}}}
\begin{document}

\begin{abstract}

In a parking function, a car is considered \emph{lucky} if it is able to park in its preferred spot. Extending work of Harris and Martinez, we enumerate outcomes of parking functions with a fixed set of lucky cars. We then consider a generalization of parking functions known as \emph{vector parking functions} or \emph{$\boldsymbol{u}$-parking functions}, in which a nonnegative integer capacity is given to each parking spot in the street. 
With certain restrictions on $\boldsymbol{u}$, we enumerate outcomes of $\boldsymbol{u}$-parking functions with a fixed set of lucky cars or with a fixed number of lucky cars. We also count outcomes according to which spots contain lucky cars, and give formulas for enumerating $\boldsymbol{u}$-parking functions themselves according to their set of lucky cars.

\end{abstract}

\maketitle

\section{Introduction}

Fix $n\in \NN=\{1,2,3,\ldots\}$ and let $[n]\coloneq\{1,2,\ldots,n\}$. 
A \emph{parking function} of length $n$ is a sequence $a=(a_1,a_2,\ldots, a_n)\in [n]^n$ whose weakly increasing rearrangement $a^\uparrow=(a_{(1)},a_{(2)},\ldots,a_{(n)})$ satisfies $a_{(i)}\leq i$ for all $i\in[n]$. Parking functions can be described by means of a deterministic parking process as follows. Consider a parking lot with $n$ parking spots on a one-way street (directed from left to right) labeled by $[n]$. A sequence of $n$ cars enters the street from the left one by one, with car $i$ having a preferred spot $a_i$, which we call its \emph{parking preference}. 
For each $i\in \NN$, car $i$ drives to its preferred spot $a_i$ and attempts to park. 
If the spot is not available, the car continues to drive to the right and parks in the next available spot, if one exists. 
If there is no available spot, the car exits the lot and is unable to park. 
We call this the \emph{parking rule}, and the set of preference lists $\alpha=(a_1,a_2,\ldots,a_n)$ is called a \emph{parking function} if all cars can park under the parking rule. 
We let $\PF_n$ denote the set of parking functions of length $n$, and Konheim and Weiss established that $|\PF_n|=(n+1)^{n-1}$ \cite{konheim1966occupancy}. 
For a parking function $\alpha$, the order in which the cars park on the street is called the \emph{outcome} of $\alpha$ and is denoted $\out(\alpha)$. If $\Sym_n$ denotes the set of permutations of $[n]$ written in one-line notation, then the outcome of $\alpha$ is 
\[\out(\alpha)=\pi_1\pi_2\cdots\pi_n,\]
where $\pi_i$ denotes that car $\pi_i$ parked in spot $i$ on the street.

Much has been done in studying parking functions, subsets of parking functions, and generalizations. 
This includes MVP parking functions in which a car bumps an earlier car out of their preference \cite{MVP}, Naples parking functions in which a car backs up attempting to park in the spot behind their preference whenever they find their preferred spot occupied \cites{naples,naplesCatalan,naplesandstats}, cases in which a car only tolerates parking in a subset of spots on the street (with some specified order in which they check those spots) \cites{SamSpiro, pullback, ellinterval, Colaric2020IntervalPF,fang2024vacillatingparkingfunctions,bradt2024unitintervalparkingfunctions}, and cases in which some spots are unavailable at the start of parking \cite{completions}. 
There has also been work in parking cars of various lengths \cites{assortments,countingassortments}, and others in which cars are only allowed to stay in their parking spot a fixed length of time \cite{metered}. 
Moreover, certain families of parking objects have connections to noncrossing partitions \cite{stanley1997parking}, computing volumes of flow polytopes \cite{flowpolys}, counting Boolean intervals in the weak Bruhat order of the symmetric group \cite{boolean}, ideal states in the Tower of Hanoi game \cite{hanoi}, the faces of the permutohedron \cite{unit_perm}, and the Quicksort algorithm \cite{quicksort}.
For more on parking functions and other connections, see the survey by Yan \cite{Yan2015}, and for open problems in this area see \cite{PFCYOA}.

As with permutations, there is a desire to understand parking objects based on some statistic. 
Such work includes the discrete statistics of descents, ascents, ties, peaks, valleys, see 
\cites{statsinPFs,celano2025statisticsellintervalparkingfunctions,DescentsInPF}.
One natural statistic of parking functions is the number of cars that park in their preference; such cars are called \emph{lucky}. 
Given a parking function $\alpha\in\PF_n$, we let $\Lucky(\alpha)=\{i\in[n]: \mbox{car } i \mbox{ is lucky} \}$ and $\lucky(\alpha)=|\Lucky(\alpha)|$.

In the case of parking functions, Gessel and Seo \cite{GesselSeo} show that 
\begin{equation}
\label{eq:gesselseo}
\sum_{\alpha\in\PF_n}q^{\lucky(\alpha)}=q\prod_{i=1}^{n-1}(i+(n-i+1)q). 
\end{equation}
A bijective proof of this result was given by Shin, which also demonstrates that parking functions with a given lucky set are in bijection with labeled forests with the same set of \emph{leaders}, i.e., vertices with the property that they are minimal among their descendants \cite{shin2008newbijectionforestsparking}. However, the formula in \Cref{eq:gesselseo} enumerates parking functions with a fixed number of lucky cars and does not account for which cars are lucky.

In this paper, we study lucky sets as ``admissible sets'' with regard to the $\lucky$ statistic for generalized parking functions.
For a given permutation statistic, a set $S$ is said to be \emph{admissible} if there exists a permutation whose instances of that statistic appear exactly at the indices given by $S$. Admissible sets for permutations were introduced by Billey, Burdzy, and Sagan in studying the peak statistic of permutations  \cite{BilleyBurdzySagan}. Similarly, admissible sets have been studied
for various other permutations statistics. Davis, Nelson, Petersen and Tenner considered admissible pinnacle sets \cite{PinnacleSet} and  Gonz\'alez et al. studied the analogous question for signed permutations \cite{SignedPinnacles}. Other work includes studying the descent polynomial, which counts permutations according to their descent sets \cite{diazlopez2017descentpolynomials}. On the other hand, admissible sets for statistics on parking functions have rarely appeared in the literature. The first few instances include the work on descent sets for parking functions studied by Cruz et al. \cite{statsinPFs}, and the $\lucky$ statistic on Stirling permutations considered as parking functions by Colmenarejo et al. \cite{colmenarejo2024luckydisplacementstatisticsstirling}. Such examples demonstrate the potential for further connections between the $\lucky$ statistic in parking functions and other combinatorial objects. Motivated by this, Harris and Martinez determined formulas for the number of parking functions with a fixed set of lucky cars \cite{harrismartinez}. In that work, they observed the close connection between a descent in the outcome permutation and lucky cars. For example, the parking function $(1,3,4,1)$ has parking outcome $1423$. 
Harris and Martinez show that whenever $\pi_{i-1}>\pi_i$ in the outcome permutation, then car $\pi_i$ is lucky, because as car $\pi_i$ parks, the spot to its left was occupied by a car later in the queue, which implies that spot was available upon car $\pi_i$ attempting to park. Thus car $\pi_i$ parked in their preference. 
This observation implies that if the set of lucky cars is fixed, then certain parking outcomes are not possible. Moreover, car 1 is always a lucky car as it is the first to park on the street. Our work begins by answering an open problem of Harris and Martinez stated in \cite{harrismartinez} by giving a formula for the number of parking outcomes given a fixed set of lucky cars.

\setcounter{section}{2} 
\setcounter{theorem}{1}
\begin{theorem}
Let $I = \{1, i_1, i_2, \ldots, i_{k-1}\}$ be a lucky set of $\PF_n$ with $1<i_1 <\cdots < i_{k-1} \leq n$. If $\out_n(I)=\{\out(\alpha):\alpha\in\PF_n\mbox{ and }\Lucky(\alpha)=I\}$, then
\[
|\out_n(I)|=k! \prod_{j\in [n]\setminus I} |I \cap [j]| = 2^{i_2-i_1} 3^{i_3-i_2} \cdots (k-1)^{i_{k-1} - i_{k-2}} k^{n -(i_{k-1}-1)}.
\]

\end{theorem}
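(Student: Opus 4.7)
The plan is to characterize the set of outcomes $\pi \in \Sym_n$ realizable as $\out(\alpha)$ for some $\alpha \in \PF_n$ with $\Lucky(\alpha) = I$, and then count them via a bijection. For a permutation $\pi = \pi_1\pi_2\cdots\pi_n$, introduce the \emph{forced lucky set} $F(\pi) := \{\pi_1\} \cup \{\pi_i : 2 \le i \le n,\ \pi_{i-1} > \pi_i\}$. By the Harris--Martinez descent observation recalled in the introduction, every element of $F(\pi)$ must be lucky in every parking function realizing $\pi$. The first step is to prove the converse: at each position $i \ge 2$ with $\pi_{i-1} < \pi_i$, the preference of car $\pi_i$ can be chosen freely in the interval $[p_{\min}(i), i]$, where $p_{\min}(i)$ is the smallest $p$ such that $\pi_j < \pi_i$ for all $j \in [p, i-1]$; setting $a_{\pi_i} = i$ makes $\pi_i$ lucky, while any smaller value in this interval makes it non-lucky. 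A short induction on the arrival order of cars shows that any combination of such independent choices yields a valid parking function with outcome $\pi$. Consequently, $\pi$ arises from some $\alpha \in \PF_n$ with $\Lucky(\alpha) = I$ if and only if $F(\pi) \subseteq I$, and it suffices to enumerate such $\pi$.

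Set $J := [n] \setminus I$. For $\pi$ with $F(\pi) \subseteq I$, let $1 = p_1 < p_2 < \cdots < p_k$ be the positions of the $I$-entries in $\pi$, and set $p_{k+1} := n+1$. The condition $F(\pi) \subseteq I$ says that whenever $\pi_i \in J$ one has $\pi_{i-1} < \pi_i$, so each block $\pi_{p_r}\pi_{p_r+1}\cdots\pi_{p_{r+1}-1}$ is strictly increasing: it starts with $\sigma_r := \pi_{p_r} \in I$ and continues with (possibly no) elements of $J$, each strictly greater than $\sigma_r$.

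This block decomposition gives a bijection between admissible outcomes $\pi$ and pairs $(\sigma, f)$, where $\sigma = \sigma_1\sigma_2\cdots\sigma_k$ is a linear ordering of $I$ and $f : J \to I$ assigns each $j \in J$ to some element of $I$ strictly less than $j$. In one direction, read off $\sigma$ as the subsequence of $I$-entries of $\pi$ and take $f(j)$ to be the $I$-element starting the block of $\pi$ containing $j$; in the other, concatenate for $r = 1, \ldots, k$ the run consisting of $\sigma_r$ followed by the elements of $f^{-1}(\sigma_r)$ in increasing order. There are $k!$ orderings $\sigma$ and, independently, $|I \cap [j]|$ choices for $f(j)$ at each $j \in J$, proving $|\out_n(I)| = k! \prod_{j \in [n] \setminus I} |I \cap [j]|$. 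The closed form $2^{i_2-i_1}3^{i_3-i_2}\cdots(k-1)^{i_{k-1}-i_{k-2}}k^{n-(i_{k-1}-1)}$ then follows by distributing the factors $1, 2, \ldots, k$ of $k!$ into the gaps $(i_{r-1}, i_r)$ (with $i_0 := 1$ and the final gap extending through $n$) and using $|I \cap [j]| = r$ for $i_{r-1} < j < i_r$ and $|I \cap [j]| = k$ for $j > i_{k-1}$.

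The main technical hurdle is the converse in the first step---verifying that independent preference choices from the admissible intervals actually produce the targeted outcome $\pi$ with the targeted lucky set. This will require an induction on the arrival order of cars, establishing that when car $c = \pi_i$ enters the lot the occupied spots are precisely $\{j : \pi_j < c\}$, so that any preference in $[p_{\min}(i), i]$ sends $c$ to spot $i$. Once this is in place, the structural decomposition and the bijective count are routine.
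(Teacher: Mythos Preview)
Your proposal is correct and follows essentially the same route as the paper. Both arguments decompose an admissible outcome $\pi$ into $k$ increasing blocks, each headed by an element of $I$, and count by choosing an ordering of $I$ (the $k!$) and, independently, assigning each $j\in J=[n]\setminus I$ to a block led by some smaller element of $I$ (the factor $|I\cap[j]|$). The closed-form rewrite is handled identically.

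The one genuine difference is upstream of the count: the paper \emph{cites} the Harris--Martinez characterization (their Theorem~2.1) and then observes that condition~(3) already implies condition~(2), so it only needs to count permutations satisfying~(1) and~(3). You instead re-derive the equivalent characterization $F(\pi)\subseteq I$ from scratch, including the converse direction via the interval-of-preferences argument and the induction on arrival order. This makes your write-up self-contained but adds work that the paper offloads to the citation; the induction you sketch (that when car $c=\pi_i$ enters, the occupied spots are exactly $\{j:\pi_j<c\}$) is correct and is precisely what is needed. Once the characterization is in hand, your bijection with pairs $(\sigma,f)$ is just a repackaging of the paper's ``place each $j$ into a subsequence starting with some $\pi_{a_i}^*<j$, then order the $k$ subsequences.''
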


Our main contributions consider the lucky sets of cars for a generalization of parking functions called vector parking functions, where each parking spot has a nonnegative integer capacity \cites{KungYan, PitmanStanley}. 
To begin, we define vector parking functions and then give a list of our main results. 
Throughout we label the parking spots by $\mathbb{N}$, and let $\bsy{u}=(u_1,\ldots,u_{n})$, where $1\leq u_1\leq \cdots\leq u_{n}$. 
For each $1\leq i\leq n$, the capacity of spot $i$ is the multiplicity of $i$ in the vector $\bsy{u}$, which we denote by $m_{i}(\bsy{u})$. 
In particular, $\sum_{i\geq 1} m_{i}(\bsy{u})=n$. 
With this notation at hand, a \emph{$\bsy{u}$-parking function} can be described via the following parking process \cite{2024primevectorparking}. A queue of $n$ cars with parking preferences given by $\bsy{a}=(a_1,\ldots,a_{n})\in\NN^n$ enters the lot, and each car drives to its preferred spot and attempts to park. 
A car may park at a spot $j$ if there are fewer than $m_{j}(\bsy{u})$ cars already parked there. 
Otherwise, it attempts to park in the next spot $j+1$, and so on. 
If there are no available spots past spot $j$, the car fails to park. 
We say that $\bsy{a}$ is a \emph{$\bsy{u}$-(vector) parking function} if all cars are able to park under this parking rule. We generally denote vector parking functions with bold symbols.
For example, let $n=4$, $\bsy{u}=(2,2,3,3)$ and $\bsy{a}=(1,3,3,1)$. Then we have $m_2(\bsy{u})=2, m_3(\bsy{u})=2$, and $m_{1}(\bsy{u})=0$. Thus, there are $2$ non-empty spots with total capacity $4$: spots~$2$, and~$3$ with capacities~$2$, and $2$, respectively. 
The cars with parking preferences given by $\bsy{a}$ then park as follows: Car $1$ parks in spot~$2$, car $2$ parks in spot $3$, car $3$ parks in spot $3$, and car $4$ parks in spot~$2$. In \Cref{fig:ps example}, we show the street after all of the cars have parked. Note that the spot marked with an $\X$ has capacity zero, i.e.\ this spot is unavailable.

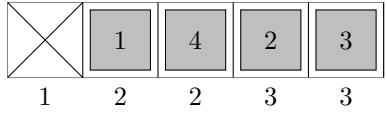
\begin{figure}[!h]
    \centering
    \begin{tikzpicture}
    \draw[step=1cm,gray,very thin] (0,0) rectangle (5,1);
    \foreach \i in {1,2,3,4,5}
    {\draw (\i,0) -- (\i,1);
    }
    \foreach \i in {0}{
    \draw (\i,0)--(\i+1,1);
    \draw (\i+1,0)--(\i,1);
    }
    \draw[fill=gray!50] (1.1,0.1) rectangle (1.9,.9);
    \node at (1.5,.5) {$1$};
    \draw[fill=gray!50] (2.1,0.1) rectangle (2.9,.9);
    \node at (2.5,.5) {$4$};
    \draw[fill=gray!50] (3.1,0.1) rectangle (3.9,.9);
    \node at (3.5,.5) {$2$};
    \draw[fill=gray!50] (4.1,0.1) rectangle (4.9,.9);
    \node at (4.5,.5) {$3$};
    
    \node at (0.5,-.25) {$1$};
    \node at (1.5,-.25) {$2$};
    \node at (2.5,-.25) {$2$};
    \node at (3.5,-.25) {$3$};
    \node at (4.5,-.25) {$3$};
    \end{tikzpicture}
    \caption{Vector parking process with $\bsy{u}=(2,2,3,3)$ and $\bsy{a}=(1,3,3,1)$.}
    \label{fig:ps example}
\end{figure}

When $\bsy{u} = (u_1, u_2, \ldots, u_n)$ is a weakly increasing sequence of positive integers, we let $\PF(\bsy{u})$ denote the set of $\bsy{u}$-parking functions of length $n$. 
A characterization of $\bsy{u}$-parking functions is as follows. 
A preference list sequence $\bsy{a} = (a_1, a_2, \ldots, a_n) \in \mathbb{N}^n$, is in $\PF(\bsy{u})$ if the rearrangement into weakly increasing order $\bsy{a}^\uparrow=(a_{(1)},a_{(2)},\ldots,a_{(n)})$ satisfies $a_{(i)} \leq u_i$ for each $i\in[n]$. 
In this way, when $\bsy{u}=(1,2,\ldots,n)$, the set of $\bsy{u}$ parking functions is precisely the set of classical parking functions. Kung and Yan \cite{KungYan} show that when $\bsy{u}$ is an arithmetic progression, that is $u_i=a+b(i-1)$ for some $a,b \in \NN$, the number of $\bsy{u}$-parking functions is $a(a+bn)^{n-1}$. 
For general $\bsy{u}$, the number of $\bsy{u}$-parking functions is given by a determinantal formula \cites{KungYan, PitmanStanley}, which is not easy to evaluate.

In the classical parking function setting, the parking outcome can be described by a permutation of $[n]$ in one-line notation $\pi=\pi_1\pi_2\cdots\pi_n$, where $\pi_i=j$ means that car $j$ parked in spot $i$. 
We introduce the outcome of a $\bsy{u}$-parking function, which we formally state in \Cref{def:outcome of vector parking}. 
Given the capacities of the parking spaces in a $\bsy{u}$-parking function, 
whenever multiple cars park in the same numbered spot, we adopt the convention of listing those cars in increasing order.
Moreover, whenever there are unavailable parking spots in a $\bsy{u}$-parking function, we denote those with an $\X$.
Namely, if $\bsy{u}=(u_1,u_2,\ldots,u_n)$, with $M=\max(\bsy{u})$, then
the \emph{outcome} of $\bsy{a}=(a_1,a_2,\ldots,a_n)$ is denoted by \[ \out_{\bsy{u}}(\bsy{a})=B_1B_2\ldots B_M,\] where $B_i$ contains the set of cars (in increasing order) that parked in spot $i$, and if a spot $i$ is unavailable (i.e., because $m_i(\bsy{u})=0$), then $B_i=\X$, which means no cars can park there.
In this way, {when ignoring the unavailable spots, i.e.\ any $B_i=\X$}, the outcome of a $\bsy{u}$-parking function is an {ordered} set partition of $[n]$ where the size of the blocks are encoded by positive capacities for the spots described by $\bsy{u}$. For example, the outcome of the $(2,2,3,3)$-parking function  $(1,3,3,1)$, illustrated in \Cref{fig:ps example} is $\out((1,3,3,1))=
\X\{1,4\}\{2,3\}$.

In a vector parking function, a car is said to be \emph{lucky} if it parks in its preferred parking spot. For example, in \Cref{fig:ps example}, the only lucky cars are $2$ and $3$ as they park in their preferred spot $3$, while cars $1$ and $4$ park in spot $2$ making them unlucky. 
We study vector parking functions with a fixed set of lucky cars. We let $\Lucky_{\bsy{u}}(\bsy{a})$ denote the set of lucky cars of $\bsy{a} \in \PF(\bsy{u})$. Unlike the classical parking functions in which the first car is always lucky, this is not always the case for the vector parking functions, as some parking spots might be unavailable at the start of the parking process. For example, if $\bsy{u}=(2,2,3,3)$ and $\bsy{a}=(1,3,3,1)$ then $\Lucky_{\bsy{u}}(\bsy{a})=\{2,3\}$. For any subset $I\subseteq[n]$, we define
\[\LPF_{\bsy{u}}(I)\coloneqq\{\bsy{a}\in \PF(\bsy{u})~:~ \Lucky_{\bsy{u}}(\bsy{a})=I\},\]
which is the set of $\bsy{u}$-parking functions of length $n$ with lucky cars in the set $I$ and unlucky cars in the set $[n]\setminus I$.

Similar to classical parking functions, not every ordered set partition can be the outcome of a $\bsy{u}$-parking function with a fixed set of lucky cars. For example, if $\bsy{u}=(2,2,3,3)$, then the outcome $\out((1,3,3,1))=\X\{1,4\}\{2,3\}$ given in \Cref{fig:ps example} cannot be the outcome of a $\bsy{u}$-parking function with lucky set $I=\{3\}$. This is because car $2$ and car $3$ parked in spot $3$, however when car $2$ enters the street, spot $2$ is not fully occupied. 
This means that parking in spot $3$ would make car $2$ lucky. 

We let $\out_{\bsy{u}}(I)$ denote the set of outcomes of all $\bsy{u}$-parking functions with lucky set $I$. 
In \Cref{subsec: k cars lucky no rep} we  characterize the possible outcomes for $\bsy u$-parking functions given a lucky set $I$, and enumerate outcomes of vector parking functions with no repetitions and where $I$ has a fixed size. 

\setcounter{section}{3} 
\setcounter{theorem}{6}
\begin{theorem}
Fix a lucky set of cars $I$, let $M=\max{(\bsy{u})}$, and let $B=B_1B_2\ldots B_M\in \O_{\bsy{u}}
\coloneqq\{\out_{\bsy{u}}(\bsy{a})~:~\bsy{a}\in\PF(\bsy{u})\}$.
Then $B \in \O_{\bsy{u}}(I)$ 
if and only if the following hold:
\begin{enumerate}
    \item if $B_1\neq \X$, then $B_1\subseteq I$,
    \item if there exists a $b_1\in B_{i-1}\neq \X$ and $b_2\in B_{i}\neq \X$ such that $b_1>b_2$ then $b_2\in I$.
\end{enumerate}
\end{theorem}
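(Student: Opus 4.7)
I would establish the two implications separately. For necessity, fix $\bsy{a}\in\PF(\bsy{u})$ with $\Lucky_{\bsy{u}}(\bsy{a})=I$ and $\out_{\bsy{u}}(\bsy{a})=B$. Part (1) is immediate from the parking rule: cars only advance to the right, so any car parking in spot~$1$ must have preferred spot~$1$ and is therefore lucky. For part (2), suppose toward a contradiction that $b_1\in B_{i-1}\neq \X$, $b_2\in B_i\neq \X$, $b_1>b_2$, and $b_2\notin I$. Since $b_2$ is unlucky we have $a_{b_2}<i$, so on its way to spot~$i$ the car $b_2$ passes through every available spot between $a_{b_2}$ and $i-1$; in particular, since $B_{i-1}\neq\X$, spot $i-1$ is such a spot and must be at capacity when $b_2$ arrives. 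But $b_1>b_2$ arrives strictly later, at which point spot $i-1$ is still at capacity, contradicting $b_1\in B_{i-1}$.

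For sufficiency I would produce an explicit parking function. Given $B\in\O_{\bsy{u}}$ satisfying (1) and (2), for each $j\in[n]$ let $s(j)$ denote the unique index with $j\in B_{s(j)}$, and define
\[
a_j = \begin{cases} s(j) & \text{if } j\in I,\\ s(j)-1 & \text{if } j\notin I. \end{cases}
\]
Condition (1) ensures that $s(j)\geq 2$ whenever $j\notin I$, so $a_j\in\NN$. Since $B\in\O_{\bsy{u}}$ is attained as an outcome, $|B_i|=m_i(\bsy{u})$ for all $i$. I then prove by induction on $j$ that car $j$ parks in spot $s(j)$. When $j\in I$, the cars already in spot $s(j)$ are precisely those in $B_{s(j)}\cap[j-1]$, which by induction occupy $s(j)$ and leave at least one free slot. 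When $j\notin I$, the preference $s(j)-1$ lands either on an $\X$-spot of capacity zero, or, by condition (2) applied with $b_2=j$, on a spot whose $m_{s(j)-1}(\bsy{u})$ inhabitants all have index less than $j$ and have therefore already filled it by the inductive hypothesis; in either case $j$ advances to spot $s(j)$ and parks there as an unlucky car. Hence $\out_{\bsy{u}}(\bsy{a})=B$ and $\Lucky_{\bsy{u}}(\bsy{a})=I$.

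The main technical point is the inductive sufficiency argument: one must treat the $\X$-gap case and the non-$\X$ case uniformly, and in the latter condition (2) is precisely what forces every car in $B_{s(j)-1}$ to arrive before $j$, so that spot $s(j)-1$ is already at capacity when $j$ attempts to park there. The identity $|B_i|=m_i(\bsy{u})$, obtained from $B\in\O_{\bsy{u}}$ together with $\sum_i m_i(\bsy{u})=n$, is used throughout to track available capacity.
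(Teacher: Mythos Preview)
Your proposal is correct and follows essentially the same approach as the paper. In both directions the core ideas coincide: for necessity you exploit that a car reaching spot $i$ must have found every checked spot to the left full, and for sufficiency you build the same explicit preference list (lucky cars prefer $s(j)$, unlucky cars prefer $s(j)-1$). The only cosmetic differences are that in the forward direction you argue by contradiction on $b_1$'s ability to park rather than directly on $b_2$'s preference, and in the backward direction you run a clean induction on the car index where the paper uses a minimal-counterexample argument; these are equivalent packagings of the same reasoning.
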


In Sections \ref{subsec: spot 1 is unavailable}-\ref{subsec: spot 3 is unavailable}, we enumerate outcomes for a general $I$ in certain special cases for $\bsy u$. In \Cref{sec:outcomesbyspots} we enumerate the outcomes from \Cref{thm:upfs characterization} with a fixed set of \emph{lucky spots}, defined in that section. Lastly in \Cref{sec:uparkingfunctions}, we give a formula for the number of $\bsy u$-parking functions with a fixed set of lucky cars or with $k$ lucky cars for a general vector $\bsy u$, and discuss related results. We conclude with \Cref{sec:future work} by providing some directions for future work.

\setcounter{section}{1}
\section{Counting outcomes of classical parking functions with a fixed set of lucky cars}

In this section, we provide a formula for the number of outcomes of parking functions with a fixed lucky set. Harris and Martinez characterized the permutations that are outcomes of parking functions with a fixed lucky set $I$. We recall this result next.

\begin{theorem}[\cite{harrismartinez}, Theorem 2.3]\label{thm:characterizing outcomes}
Fix a lucky set $I\subseteq[n]$ of $\PF_n$. 
Then $\pi=\pi_1\pi_2\cdots\pi_n\in \Sym_n$ is the outcome $\alpha\in\LPF_n(I)$
if and only if 
\begin{enumerate}
    \item $\pi_1\in I$,
    \item if $\pi_i\in I$ and $\pi_{i+1}\notin I$, then $\pi_i<\pi_{i+1}$, and 
    \item if $\pi_{i-1}>\pi_{i}$, then $\pi_{i}\in I$.
\end{enumerate}
\end{theorem}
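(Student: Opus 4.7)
The plan is to prove both implications separately: the forward direction follows from a direct analysis of the parking rule, while the backward direction uses an explicit construction of a preference list realizing the given outcome with the prescribed lucky set.

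For the forward direction, assume $\alpha \in \LPF_n(I)$ has outcome $\pi$ and check each of (1)--(3). For (1), since no car is ever pushed backward in the parking rule, the car $\pi_1$ occupying spot 1 must have preferred spot 1 and is therefore lucky. For (2), if $\pi_i \in I$ and $\pi_{i+1} \notin I$, then car $\pi_{i+1}$ had preference at most $i$ and was displaced at least one spot to the right; this means spot $i$ was already occupied when $\pi_{i+1}$ arrived, so the car currently in spot $i$, namely $\pi_i$, must have entered the queue first, giving $\pi_i < \pi_{i+1}$. For (3), suppose $\pi_{i-1} > \pi_i$. Then car $\pi_{i-1}$ enters after car $\pi_i$, so when $\pi_i$ arrives, spot $i-1$ is still empty. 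If $\pi_i$'s preference were any value in $\{1, \ldots, i-1\}$, it would stop at the first available spot in this range, which is at latest $i-1$, contradicting that $\pi_i$ parks in spot $i$. Hence $\pi_i$'s preference equals $i$, so $\pi_i \in I$.

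For the backward direction, given $\pi$ satisfying (1)--(3), define a preference list by
\[
a_c = \begin{cases} \pi^{-1}(c) & \text{if } c \in I, \\ \pi^{-1}(c) - 1 & \text{if } c \notin I. \end{cases}
\]
Condition (1) ensures $\pi^{-1}(c) \geq 2$ for $c \notin I$, so the preferences are valid positive integers. I then argue by induction on $c$ that after cars $1, 2, \ldots, c$ have parked, the set of occupied spots is exactly $\{\pi^{-1}(c') : c' \leq c\}$. In the inductive step, if $c \in I$ the target spot $j = \pi^{-1}(c)$ is free because $\pi$ is a bijection. If $c \notin I$, the contrapositive of (3) applied at position $j$ gives $\pi_{j-1} \leq c$, hence $\pi_{j-1} < c$ (distinctness), so spot $j-1$ is already occupied by the inductive hypothesis; the car is bumped exactly one spot to the right to land at the free spot $j$. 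This establishes simultaneously that the outcome of $\alpha$ is $\pi$ and that $\Lucky(\alpha) = I$, since lucky cars park at their preferences by construction while unlucky cars park one spot past.

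The main obstacle is showing that the construction for unlucky cars produces a car landing in exactly the intended spot rather than over-bumping further to the right; this is precisely what condition (3) delivers, since $\pi_j = c \notin I$ forbids $\pi_{j-1} > c$ so the displacement halts at $j$. It is worth noting that condition (2) is logically redundant given (3): if $\pi_i \in I$, $\pi_{i+1} \notin I$, and $\pi_i > \pi_{i+1}$, then (3) would force $\pi_{i+1} \in I$, a contradiction. Nonetheless, (2) is retained in the statement as a clean necessary condition that can be read off immediately from the parking rule without appealing to (3).
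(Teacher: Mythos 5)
Your proof is correct. The paper itself states this theorem as a recalled result from Harris--Martinez and gives no proof, so there is nothing to compare line by line; but your argument is the natural one and, in particular, your backward-direction construction (assign each lucky car its parked spot and each unlucky car the spot one to its left, then verify by induction that nobody over-bumps) is exactly the construction the present paper uses in the reverse direction of its generalization to vector parking functions in \Cref{thm:upfs characterization}. Your closing observation that condition (2) is implied by condition (3) is also made explicitly in the paper, at the start of the proof of \Cref{thm:count for outcomes}.
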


We now provide a formula that counts the permutations described in \Cref{thm:characterizing outcomes}.

\begin{theorem}\label{thm:count for outcomes}
Let $I = \{1, i_1, i_2, \ldots, i_{k-1}\}$ be a lucky set of $\PF_n$ with $1<i_1 <\cdots < i_{k-1} \leq n$. If $\out_n(I)=\{\out(\alpha):\alpha\in\PF_n\mbox{ and }\Lucky(\alpha)=I\}$, then
\[
|\out_n(I)|=k! \prod_{j\in [n]\setminus I} |I\cap [j]| = 2^{i_2-i_1} 3^{i_3-i_2} \cdots (k-1)^{i_{k-1} - i_{k-2}} k^{n -(i_{k-1}-1)}.
\]
\end{theorem}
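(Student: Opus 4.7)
The plan is to apply the characterization from \Cref{thm:characterizing outcomes} to decompose any valid outcome $\pi$ into a uniquely determined sequence of blocks, each consisting of a lucky car followed by an increasing run of unlucky cars, and then count the resulting data directly.

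First, I would observe that conditions (1)--(3) of \Cref{thm:characterizing outcomes} force the following structure on $\pi$: condition~(1) places a lucky car first; condition~(3), via its contrapositive, forbids descents except at a position occupied by a lucky car, so between consecutive lucky cars (and after the last one) the unlucky cars form an increasing run; and condition~(2) forces the first unlucky car in any such run to exceed the preceding lucky car. Hence every $\pi \in \out_n(I)$ has a unique decomposition
\[
\pi = L_1\, R_1\, L_2\, R_2\, \cdots\, L_k\, R_k,
\]
where $L_1 L_2 \cdots L_k$ is a permutation of $I$, each $R_j$ is a (possibly empty) increasing sequence drawn from $[n]\setminus I$, and every entry of $R_j$ is greater than $L_j$. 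Conversely, any data of this form yields a permutation satisfying conditions (1)--(3), so the decomposition is a bijection between $\out_n(I)$ and such tuples.

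Next, I would count these tuples. There are $k!$ orderings of the lucky cars $L_1,\ldots,L_k$. Given that order, each unlucky car $u\in[n]\setminus I$ is independently assigned to one of the blocks $R_1,\ldots,R_k$; its position inside that block is forced by the increasing condition, and the inequality $L_j < u$ restricts $u$ to exactly $|\{j : L_j < u\}| = |I \cap [u-1]| = |I \cap [u]|$ legal blocks (the last equality uses $u \notin I$). Multiplying these independent counts gives
\[
|\out_n(I)| \;=\; k!\prod_{u \in [n]\setminus I} |I\cap[u]|,
\]
which is the first equality in the theorem.

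Finally, to obtain the product form, I would partition $[n]\setminus I$ into the intervals $(i_\ell,i_{\ell+1})$ for $\ell=0,1,\ldots,k-2$ (setting $i_0 := 1$) together with $(i_{k-1},n]$, noting that $|I\cap[u]|$ equals $\ell+1$ on the former and $k$ on the latter. Expanding $k! = 1 \cdot 2 \cdots k$ and absorbing each factor $j$ into the corresponding $(\ell+1)^{i_{\ell+1}-i_\ell-1}$ (or $k^{n-i_{k-1}}$) bumps each exponent up by one and collapses the whole expression into $2^{i_2-i_1}\,3^{i_3-i_2}\cdots(k-1)^{i_{k-1}-i_{k-2}}\,k^{n-(i_{k-1}-1)}$. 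I do not expect a genuine obstacle in this argument; the only care needed is verifying that the block decomposition is truly bijective, which follows mechanically from \Cref{thm:characterizing outcomes}, and that the algebraic regrouping of $k!$ with the interval contributions is carried out cleanly.
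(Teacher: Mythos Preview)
Your proposal is correct and follows essentially the same approach as the paper: decompose $\pi$ into $k$ increasing blocks each headed by a lucky car, observe that each unlucky car $u$ may join exactly the $|I\cap[u]|$ blocks headed by a smaller lucky car, and then rearrange the $k$ blocks in $k!$ ways before collapsing the product algebraically. The only cosmetic difference is that the paper first records that condition~(3) already implies condition~(2), whereas you invoke~(2) directly in describing the block structure; the resulting bijection and count are identical.
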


\begin{proof}
We first show that any permutation satisfying condition (3) in \Cref{thm:characterizing outcomes} also satisfies condition (2). Let $\pi \in \Sym_n$. Suppose for all $i \in [n] $ such that $\pi_{i-1}> \pi_i$, $\pi_i \in I$. 
Then by reindexing, we have that for all $i \in[n]$, if $\pi_{i}>\pi_{i+1}$, then $\pi_{i+1}\in I$. By the contrapositive, we have that
if $\pi_{i+1} \not\in I$, then $\pi_{i} < \pi_{i+1}$, which implies that (2) holds for $\pi$ as well.

We proceed to count permutations in $\Sym_n$ satisfying conditions (1) and (3). 
Given a lucky set $I$, let $k = |I|$, and let $J = [n] \setminus I$. 
Let $\pi$ be a permutation satisfying the desired conditions. 
Suppose \[\pi\inv(I) = \{a_1=1, a_2, \ldots, a_k\},\] 
where $a_1 < a_2 < \cdots < a_k$. 
Then $\pi_1 \in I$, and $\pi$ can be partitioned into $|I| = k$ increasing subsequences, where each subsequence begins with some $\pi_{a_i}^* \in I$ and the remaining entries in the subsequence are not in $I$. More technically, $\pi$ (written in one-line notation) is of the form
\[
\pi = \underline{\pi_1^* \pi_2 \cdots \pi_{a_2-1}}\,\, \,\underline{\pi_{a_2}^* \pi_{a_2+1} \cdots \pi_{a_3-1}} \,\, \,\underline{\pi_{a_3}^* \pi_{a_3+1} \cdots \pi_{a_4-1}}  \,\, \,\cdots \,\,\,\underline{\pi_{a_k}^* \pi_{a_k+1} \cdots \pi_{n}},
\]
where the starred entries are the elements of $I$, and the non-starred entries are the elements of $[n] \setminus I$. 
By condition (3), each underlined subsequence must be increasing. 
We now count the number of possible permutations $\pi$ of this form. 

Let $j \in J$. In the permutation $\pi$, $j$ must appear in one of the $k$ subsequences after some $\pi_{a_i}^*$. Since each subsequence is increasing, $j$ must appear in one of the subsequences beginning with a $\pi_{a_i}^*$ that is less than $j$.
Define the set $I_j = \{x\in I: x<j\} = I \cap [j]$. It follows that $|I_j|$ is the number of possible sequences in which $j$ can be placed. Once a subsequence is chosen, the position of $j$ within that subsequence is fixed, since each subsequence is increasing. Then the number of possible placements of all $j\in J$ into the $k$ subsequences is given by $\prod_{j\in J} |I_j|$.
Once all of the $k$ underlined subsequences are constructed, they can be rearranged in $k!$ ways. So the total number of permutations of the desired form is $k! \prod_{j\in [n]\setminus I} |I_j|$.

We check that this product is equivalent to $2^{i_2-i_1} 3^{i_3-i_2} \cdots (k-1)^{i_{k-1} - i_{k-2}} k^{n -(i_{k-1}-1)}$. Let $j \in J$.
Then, $|I_j|=1$ if and only if $1 <j <i-1$, and the
number of $j\in J$ such that $|I_j| = 1$ is given by the number of integers between $1$ and $i_1$; that is, $i_1-2$.
Now let $\ell \in \{2,\ldots, k-1\}$. Then $|I_j| = \ell$ if and only if $i_{\ell-1}<j <i_\ell$. 
It follows that the number of $j\in J$ such that $|I_j| = \ell$ is given by the number of integers  between $i_{\ell-1}$ and $i_\ell$, which is $i_{\ell}-i_{\ell-1}-1$.
Also, the number of $j\in J$ such that $|I_j| = k$ is given by the number of integers in $[n]$ greater than $i_{k-1}$, which is $n-i_{k-1}$.
Then
\al{
\prod_{j\in [n]\setminus I} |I_j| &= (1)^{|\{ j \in [n
] \setminus I: |I_j| = 1\}|}(2)^{| \{ j \in [n] \setminus I: |I_j| = 2\}|} \cdots (k-1)^{| \{ j \in [n] \setminus I: |I_j| = k-1\}|}(k)^{| \{ j \in [n] \setminus I: |I_j| = k\}|}\\
&= (1)^{i_1-2}(2)^{i_2-i_1-1} \cdots(k-1)^{i_{k-1}-i_{k-2}-1} k^{n-i_{k-1}}.
}
By substitution, we obtain
\al{
k! \prod_{j\in [n]\setminus I} |I_j| &= k! (1)^{i_1-2}(2)^{i_2-i_1-1} \cdots (k-1)^{i_{k-1}-i_{k-2}-1}k^{n-i_{k-1}}\\
&= (k)(k-1)\cdots(2)(1) (1)^{i_1-2}(2)^{i_2-i_1-1} \cdots (k-1)^{i_{k-1}-i_{k-2}-1} k^{n-i_{k-1}}\\
&= (1)^{i_1-1}(2)^{i_2-i_1} \cdots (k-1)^{i_{k-1}-i_{k-2}} k^{n-i_{k-1}+1}\\
&= 2^{i_2-i_1} 3^{i_3-i_2} \cdots (k-1)^{i_{k-1} - i_{k-2}} k^{n -(i_{k-1}-1)}.\qedhere
}
\end{proof}

\begin{example}
 We provide an example using the formula in \Cref{thm:count for outcomes} by using the same example in \cite[Example 2]{harrismartinez}, which counts the number of outcome permutations for $I = \{1,4\}$ when $n=5$ to be four. 
 If $I = \{1,4\}$, then $J=\{2,3,5\}$. 
Then $I_2 = \{1\}$, $I_3 = \{1\}$, $I_5=\{1,4\}$.
So the number of outcomes we obtain is 
\[
|I|! \cdot |I_2| \cdot |I_3| \cdot |I_5| = 2!(1)(1)(2) = 4.
\]
\end{example}

We recall the following result with a formula that counts the number of outcomes when the first $k$ cars are lucky.

\begin{theorem}{\cite[Theorem 2.4]{harrismartinez}}
 Let $I=\{1,2,3,\ldots,k\}\subseteq[n]$ be a lucky set of $\PF_n$. Then 
\[|\out_n(I)|=\sum_{J=\{j_1=1,j_2,j_3,\ldots,j_k\}\subseteq[n]}k!\binom{n-k}{j_2-j_1-1,j_3-j_2-1,\ldots,j_{k}-j_{k-1}-1,n-j_{k}}.\] 
\end{theorem}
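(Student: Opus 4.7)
The plan is to apply \Cref{thm:characterizing outcomes} to reduce the problem to an enumeration of permutations satisfying two combinatorial conditions, and then to stratify the count by the positions at which the lucky values appear in the outcome permutation.

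First I would observe that when $I = \{1, 2, \ldots, k\}$, condition~(2) of \Cref{thm:characterizing outcomes} is automatic: if $\pi_i \in I$ and $\pi_{i+1} \notin I$, then $\pi_i \le k < \pi_{i+1}$. So the problem reduces to counting $\pi \in \Sym_n$ with $\pi_1 \in I$ (condition~(1)) and with every descent bottom in $I$ (condition~(3)). I would then stratify by the set $J = \{j_1 < j_2 < \cdots < j_k\} \subseteq [n]$ of positions at which the values $1, 2, \ldots, k$ appear in $\pi$; condition~(1) forces $j_1 = 1$. The structural observation driving the count is that inside each block of positions $j_\ell+1, \ldots, j_{\ell+1}-1$ (together with the tail $j_k+1, \ldots, n$), the entries all belong to $[n]\setminus I$ and must be strictly increasing, since any descent inside such a block would have its bottom at a non-lucky position, contradicting condition~(3).

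With that structural observation in hand, a permutation with a fixed $J$ is determined by two independent choices: a bijection from $\{1, \ldots, k\}$ to the positions of $J$, contributing the factor $k!$, and a distribution of $\{k+1, \ldots, n\}$ into the $k$ increasing blocks of sizes $j_2-j_1-1, j_3-j_2-1, \ldots, j_k-j_{k-1}-1, n-j_k$, contributing the multinomial coefficient in the statement. Summing over admissible $J$ yields the claimed formula.

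The main obstacle is really verifying that this stratification gives a bijection with the permutations satisfying (1) and (3), rather than an overcount or undercount. In the forward direction, the only possible descents in a constructed $\pi$ occur at a transition from the last unlucky entry of a block (a value exceeding $k$) to the next lucky entry (a value at most $k$), whose bottom therefore lies in $I$; in the reverse direction, any valid $\pi$ recovers $J$ uniquely as the positions of $\{1, \ldots, k\}$, and the increasing-block condition reconstructs the remaining entries from their assigned subsets of $[n]\setminus I$. Once that bijection is established, the enumeration is immediate.
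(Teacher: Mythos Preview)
Your argument is correct. The reduction via \Cref{thm:characterizing outcomes} is sound, the observation that condition~(2) is vacuous when $I=[k]$ is right, and the stratification by the position set $J=\pi^{-1}(I)$ together with the increasing-block analysis gives a clean bijection yielding the multinomial term for each $J$.

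However, this is not the route the paper takes. The paper does not prove the summation formula directly; it first establishes the closed form $|\out_n(I)|=k!\,k^{n-k}$ as a specialization of \Cref{thm:count for outcomes} (where one counts, for each unlucky car $j$, the number $|I\cap[j]|$ of increasing runs it may join, and multiplies by $k!$ for the ordering of the runs), and then remarks that the summation in the statement collapses to $k!\,k^{n-k}$ by the multinomial theorem. Your approach instead fixes the \emph{positions} of the lucky values first and distributes the unlucky values into the resulting gaps. Both are valid: the paper's route shows that the summation is in fact a disguised closed form and situates the result as a corollary of the general-$I$ count, while your route gives a self-contained bijective proof of the summation identity as written, without passing through the closed form.
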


Using our formula in \Cref{thm:count for outcomes}, we can obtain the same result, and we remark that this can also be recovered by applying the multinomial theorem to the sum.

\begin{corollary}\label{cor:formula}
If $I = \{1,2,\ldots, k\}$ is a lucky set of $\PF_n$ with $k\leq n$, then
\[ |\out_n(I)|=k! \prod_{j\in \{k+1,\ldots, n\}} |I| \,\,=\,\, k!(k)^{n-k}.\]
\end{corollary}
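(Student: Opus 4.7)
The plan is to specialize Theorem~\ref{thm:count for outcomes} directly to the lucky set $I = \{1, 2, \ldots, k\}$. Since $I$ is an initial segment of $[n]$, its complement is $[n] \setminus I = \{k+1, \ldots, n\}$, and for any $j$ in this complement every element of $I$ lies in $[j]$, so $I \cap [j] = I$ and $|I \cap [j]| = |I| = k$. Substituting into the product form of Theorem~\ref{thm:count for outcomes} gives
\[
|\out_n(I)| = k!\prod_{j = k+1}^{n} |I \cap [j]| = k!\prod_{j=k+1}^{n} k = k!\, k^{n-k},
\]
which yields both equalities in the corollary (the middle expression is just the product written using $|I|$ in place of the equal quantity $|I\cap[j]|$).

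As a sanity check, I would also verify consistency with the exponential form of Theorem~\ref{thm:count for outcomes}. Writing $I = \{1, i_1, \ldots, i_{k-1}\}$ with $i_\ell = \ell+1$, each consecutive difference $i_{\ell+1} - i_\ell$ equals $1$, and the last exponent $n - (i_{k-1} - 1)$ becomes $n-k+1$. The expression
\[
2^{i_2-i_1}\,3^{i_3-i_2}\cdots (k-1)^{i_{k-1}-i_{k-2}}\, k^{n-(i_{k-1}-1)}
\]
then collapses to $2\cdot 3 \cdots (k-1)\cdot k^{n-k+1} = (k-1)!\, k^{n-k+1} = k!\, k^{n-k}$, matching the direct calculation.

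There is no genuine obstacle: the corollary is purely a bookkeeping specialization of Theorem~\ref{thm:count for outcomes}. The only points to keep straight are the indexing convention ($i_\ell = \ell+1$ rather than $\ell$) and the fact that one factor of $k$ gets absorbed into $k!$ when converting between the two forms of the formula.
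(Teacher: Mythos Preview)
Your proposal is correct and follows exactly the approach the paper intends: the corollary is presented there as an immediate specialization of Theorem~\ref{thm:count for outcomes}, and you carry out precisely that substitution, including the cross-check against the exponential form. There is nothing to add.
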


We provide an example that uses the formula in \Cref{cor:formula} and the formula in \cite{harrismartinez} next.

\begin{example} 
If $I = \{1,2,3\}$ and $n=5$, \cite[Theorem 2.4]{harrismartinez} yields
\[|\out_n(\{1,2,3\})|=\sum_{J=\{j_1=1,j_2,j_3,\ldots,j_k\}\subseteq[n]}k!\binom{n-k}{j_2-j_1-1,j_3-j_2-1,\ldots,j_{k}-j_{k-1}-1,n-j_{k}},\]
where in this case $k=3$ and the possible subsets $J$ are $\{1,2,3\}, \,\{1,2,4\}, \{1,2,5\}, \,\{1,3,4\}, \{1,3,5\}, \,\{1,4,5\}$.
 
The sum can be written as
\al{
|\out_n(\{1,2,3\})|&= 3! \binom{2}{0,0,2} + 3! \binom{2}{0,1,1} + 3! \binom{2}{0,2,0}+ 3! \binom{2}{1,0,1} + 3! \binom{2}{1,1,0} + 3! \binom{2}{2,0,0}\\
&= 3!(1 + 2+ 1 + 2+2+1) \\
&= 3!(9) = 3!(3)^{5-3}.
}
\end{example}

In \cite{harrismartinez}, the authors also considered the case for when there are more spots than cars and gave a characterization of the outcomes of parking functions with a fixed lucky set $I$. We let $\PF_{m,n}$ denote the set of $(m,n)$-parking functions with $m$ cars and $n$ spots, and we assume that $1\leq m \leq n$. We begin with a definition of the outcomes in a parking function with more spots than cars. 

\begin{definition}[\cite{harrismartinez}, Definition 3.1]\label{def:outcome for mn}
Let $\Sym_{m,n}$ denote the set of permutations of the multiset $\{\X,\ldots,\X\}\cup[m]$ with $\X$ having multiplicity $n-m$. Given a parking function $\alpha=(a_1,a_2,\ldots,a_m) \in \PF_{m,n}$, define the \textit{outcome} of $\alpha$ by \[\mathcal{O}(\alpha)=\pi_1\pi_2\cdots\pi_n\in \Sym_{m,n},\] where $\pi_i=j\in[m]$ denotes that car $j$ parked in spot $i$, and $\pi_i=\X$ indicates that spot $i$ remained vacant.
\end{definition}

For sake of simplicity in our arguments, we assume that $\X>i$ for any $i\in \mathbb{N}$. 
Moreover, to every element $\pi=\pi_1\pi_2\cdots\pi_n\in\Sym_{m,n}$ we will prepend $\pi_0=\X$.

We recall the characterization of the outcomes of parking functions with more spots than cars with a fixed lucky set $I$.

\begin{theorem}[\cite{harrismartinez}, Theorem 3.3]\label{thm:characterizing outcomes in mn}
Fix a lucky set $I\subseteq[m]$  of $\PF_{m,n}$. Then $\pi=\pi_1\pi_2\cdots\pi_n\in \Sym_{m,n}$ is the outcome of $\alpha\in\LPF_n(I)$
if and only if $\X\pi=\X\pi_1\pi_2\ldots\pi_n$ satisfies the following:
\begin{enumerate}
    \item if $\pi_{i-1}=\X$ and $\pi_i\in[m]$, then $\pi_i\in I$, 

    \item if $\pi_i\in I$ and $ \pi_{i+1} \notin I\cup\{\X\}$, then $\pi_i<\pi_{i+1}$,

    \item if 
    $\pi_{i-1},\pi_{i}\in[m]$ and 
    $\pi_{i-1}>\pi_{i}$, then $\pi_{i}\in I$. 
\end{enumerate}
\end{theorem}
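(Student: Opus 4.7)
The plan is to prove each direction of the biconditional separately, mirroring the proof of \Cref{thm:characterizing outcomes} (the $\PF_n$ version) with the modifications needed to accommodate the $\X$ entries marking vacant spots. Throughout I use two basic dynamical facts: cars arrive in order of their labels, and a spot, once occupied, remains occupied for the rest of the process.

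For the forward direction, assume $\pi$ is the outcome of some $\alpha\in\LPF_n(I)$. For condition (1), if $\pi_{i-1}=\X$ then spot $i-1$ was never occupied, so any car with preference at most $i-1$ that drove past would instead have stopped at $i-1$; thus the car parked at spot $i$ must have had preference exactly $i$, placing it in $I$. For condition (2), if $\pi_i=c\in I$ and $\pi_{i+1}=c'\in[m]\setminus I$, then $c'$'s preference is at most $i$, so $c'$ found spot $i$ occupied by $c$ upon arrival, forcing $c<c'$. For condition (3), if $\pi_{i-1}>\pi_i$ with both in $[m]$, then car $\pi_i$ parked before car $\pi_{i-1}$, so spot $i-1$ was empty when $\pi_i$ arrived; the same reasoning as in (1) forces $\pi_i$ to prefer spot $i$, hence $\pi_i\in I$.

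For the reverse direction, assume $\pi$ satisfies (1)--(3), and for each $c\in[m]$ let $s_c$ be the position with $\pi_{s_c}=c$. Define preferences by $a_c=s_c$ if $c\in I$ and $a_c=s_c-1$ if $c\notin I$; condition (1) applied at $i=1$ (via the convention $\pi_0=\X$) gives $s_c\geq 2$ for $c\notin I$, so the construction is well-defined. I then induct on $c$ to show that car $c$ parks at spot $s_c$. For lucky $c$, spot $s_c$ is empty on arrival since $s_c\notin\{s_1,\ldots,s_{c-1}\}$, and $c$ parks at its preference. For unlucky $c$, conditions (1) and (3) force $\pi_{s_c-1}\in[m]$ and $\pi_{s_c-1}<c$, respectively, so spot $s_c-1$ is blocked by a previously parked car, and $c$ slides to the empty spot $s_c$. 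The resulting lucky set is exactly $I$ because $a_c=s_c$ if and only if $c\in I$.

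The main obstacle is coordinating the reverse construction so that simultaneously every car can park, the outcome equals $\pi$, and the lucky set is exactly $I$. The last point is immediate from the preference definition, and the first two follow from the induction once one handles the slightly delicate point that $s_c\geq 2$ whenever $c\notin I$, which is precisely where the prepended $\pi_0=\X$ and condition (1) enter. Everything else reduces to the same empty-spot-propagation argument used for classical $\PF_n$, applied carefully block-by-block between consecutive $\X$ entries.
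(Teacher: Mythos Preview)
The paper does not prove this theorem; it is quoted verbatim from \cite{harrismartinez} as background for \Cref{thm:count for outcomes in mn}, so there is no proof here to compare against. That said, your argument is correct and matches the template the paper itself uses for the analogous vector result \Cref{thm:upfs characterization}: verify each condition in the forward direction by the empty-spot argument, and in the reverse direction set $a_c=s_c$ for $c\in I$ and $a_c=s_c-1$ for $c\notin I$, using conditions (1) and (3) to guarantee that each unlucky car is displaced exactly one spot. Your implicit observation that condition (2) is not needed in the reverse direction (being a consequence of (3)) is also exactly what the paper exploits in the proof of \Cref{thm:count for outcomes in mn}. One cosmetic wrinkle: your forward argument for condition (1) at $i=1$ refers to ``spot $0$,'' which does not exist; there the conclusion follows directly because preferences lie in $[n]$, so any car parked in spot $1$ must have preferred it.
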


We now provide a formula that counts such permutations described in \Cref{thm:characterizing outcomes in mn}.

\begin{theorem}\label{thm:count for outcomes in mn}
If $I = \{1, i_1,i_2,\ldots, i_{k-2},i_{k-1}\}\subseteq[m]$ is a lucky set of $\PF_{m,n}$ with $1<i_1<i_2<\cdots<i_{k-1}\leq n$, then
\begin{align*}
    |\out_{m,n}(I)| &=\frac{(k+n-m)!}{(n-m)!} \prod_{j\in [m]\setminus I} |I\cap [j]|\\
    &=2^{i_2-i_1-1}3^{i_3-i_2-1}4^{i_4-i_3-1}\cdots (k-1)^{i_{k-1}-i_{k-2}-1}k^{m-i_{k-1}}\frac{(n-m+k)!}{(n-m)!}.
\end{align*}
\end{theorem}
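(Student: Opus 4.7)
The plan is to adapt the proof of \Cref{thm:count for outcomes} (the $m=n$ case) by tracking the $n-m$ additional vacancies represented by $\X$. First, I would verify that condition (3) of \Cref{thm:characterizing outcomes in mn} implies condition (2), so that it suffices to count $\pi\in\Sym_{m,n}$ satisfying only conditions (1) and (3): condition (2) is nontrivial only when $\pi_i\in I$ and $\pi_{i+1}\in[m]\setminus I$, and applying the contrapositive of (3) to position $i+1$ then forces $\pi_i<\pi_{i+1}$.

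Next I would show that any valid $\pi$ decomposes canonically as $n-m$ singleton $\X$-blocks together with $k$ strictly increasing numeric \emph{sub-runs}, one per lucky car. The sub-run led by $\ell\in I$ consists of $\ell$ followed by the consecutive entries of $\pi$ lying in $[m]\setminus I$, extending until an $\X$, the next lucky car, or the end of $\pi$ is reached; conditions (1) and (3) together ensure that each sub-run is strictly increasing and that every non-$\X$ entry belongs to exactly one sub-run. The count then factors into two independent choices:
\begin{enumerate}[label=(\alph*)]
\item assigning each non-lucky car $j\in[m]\setminus I$ to a sub-run; since the sub-run is increasing, its leader must lie in $I\cap[j]$, contributing $\prod_{j\in[m]\setminus I}|I\cap[j]|$;
\item arranging the $k$ distinguishable sub-runs and the $n-m$ indistinguishable $\X$'s in a linear sequence, contributing $\binom{k+n-m}{n-m}k!=\frac{(k+n-m)!}{(n-m)!}$.
\end{enumerate}

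The main thing to verify—and where the only subtlety lies—is that the construction in (a)--(b) is a bijection with the set of valid outcomes. Every arrangement so produced satisfies conditions (1) and (3): (1) holds because the first numeric entry after any $\X$ is a sub-run leader, hence lucky; (3) holds because the entries within a sub-run strictly increase, and the leader of a sub-run adjacent to another numeric sub-run is itself lucky. Conversely, the positions of the lucky cars in $\pi$ recover the sub-runs uniquely. Finally, $\prod_{j\in[m]\setminus I}|I\cap[j]|$ simplifies exactly as in the proof of \Cref{thm:count for outcomes} (with $m$ in place of $n$) to $1^{i_1-2}\cdot 2^{i_2-i_1-1}\cdot 3^{i_3-i_2-1}\cdots(k-1)^{i_{k-1}-i_{k-2}-1}\cdot k^{m-i_{k-1}}$, and multiplying by $\frac{(k+n-m)!}{(n-m)!}$ (and using $1^{i_1-2}=1$) yields the second product form in the statement.
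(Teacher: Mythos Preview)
Your proposal is correct and follows essentially the same approach as the paper's own proof: reduce to conditions (1) and (3), decompose $\pi$ into $k$ increasing sub-runs led by lucky cars together with $n-m$ singleton $\X$-blocks, count the assignment of each $j\in[m]\setminus I$ to a sub-run by $\prod_{j}|I\cap[j]|$, and count the linear arrangement of the $k$ distinguishable sub-runs with the $n-m$ indistinguishable $\X$'s by $\frac{(k+n-m)!}{(n-m)!}$. The simplification of the product is also identical to the paper's.
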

\begin{proof}

We observe, similarly as in the proof of \Cref{thm:count for outcomes}, that for two consecutive nonempty parking spots $\pi_{i-1}, \pi_i \in [m]$, the last two conditions are equivalent to condition $(3)$.

Conditions $(2)$ and $(3)$ in \Cref{thm:characterizing outcomes in mn} both only concern the case where there are two consecutive entries in $\pi$ from the set $[m]$. (This is directly stated in (3), and in (2), $\pi_i \in I \ci [m]$ and $\pi_{i+1} \notin \{\X\}$ implies $\pi_{i+1}\in [m]$.)
So if either $\pi_{i-1} \notin [m]$ or $\pi_{i} \notin [m]$, there are no restrictions imposed by conditions (2) and (3). Then we seek to count permutations $\pi \in \Sym_{m,n}$ satisfying
\begin{enumerate}
    \item if $\pi_{i-1}=\X$ and $\pi_i\in[m]$, then $\pi_i\in I$, and
 
       \item for $\pi_{i-1}, \pi_i \in [m]$: if $\pi_{i}\not\in I$ then $\pi_{i-1}<\pi_{i}$.
    \end{enumerate}
    
Then, similarly as in the proof of \Cref{thm:characterizing outcomes}, a permutation $\pi \in \Sym_{m,n}$ satisfying these conditions can be partitioned into subsequences. 
In this case, we have length 1 subsequences containing only the entry $\X$, and $k$ increasing subsequences (also possibly of length 1), each beginning with some $\pi_{a_i}^* \in I$, and with the remaining entries being from $[m] \setminus I$. 
Because each element of $[m]$ that appears directly after an $\X$ is from $I$, we know that all of the subsequences of elements from $[m]$ will begin with an element from $I$. The assumption that $\pi_0 = \X$ also guarantees that this holds when $\pi_1 \in [m]$.

More technically,
\[
\pi = \X \cdots \X \, \underline{\pi_{a_1}^* \pi_{a_1+1} \cdots \pi_{a_2-1}}\,\X \cdots \X \,\underline{\pi_{a_2}^* \pi_{a_2+1} \cdots \pi_{a_3-1}} \,\X \cdots \X  \,\underline{\pi_{a_3}^* \pi_{a_3+1} \cdots \pi_{a_4-1}}\cdots \underline{\pi_{a_k}^* \pi_{a_k+1} \cdots \pi_{m}} \,\X \cdots \X ,
\]
where the starred entries are the elements of $I$, and the non-starred entries are the elements of $[m] \setminus I$. By condition (2), each underlined subsequence must be increasing. The $\X$'s need not appear between every pair of increasing subsequences.
We count the number of multiset permutations of this form.
We first check the possible ways to sort the elements of $[m]\setminus I$ into the subsequences.

Similarly as in the proof of \Cref{thm:characterizing outcomes}, each $j \in [m]\setminus I$ can be placed into one of the subsequences beginning with a $\pi_{a_i}^*$ that is less than $j$. Let $I_j = \{x\in I: x<j\} = I \cap [j]$. Then $|I_j|$ is the number of possible subsequences in which an element $j$ can be placed.
Once a subsequence is chosen, the position of $j$ within that subsequence is fixed, since the subsequence is increasing.
Then, over all $j\in [m]\setminus I$, the number of possible ways to place them into the $k$ different subsequences of elements from $[m]$ is $\prod_{j \in [m]\setminus I} |I_j|$.

Once all of the $k$ subsequences of elements from $[m]$ are constructed, they may appear in any order. 
We also must place $n-m$ $\X$'s throughout the permutation, and these can be placed anywhere between the $k$ subsequences. By construction, this will not contradict condition (1), since each subsequence we have constructed begins with an element of $I$.
Then, to count all multiset permutations of the desired form, we count the number of ways to arrange $k$ distinguishable subsequences and $n-m$ indistinguishable subsequences (i.e., the subsequences of the form ``$\X$''), which is given by $\frac{(k+n-m)!}{(n-m)!}$.
It follows that $|\mathcal{O}_{m,n}(I)|$ is counted by
\[\frac{(k+n-m)!}{(n-m)!} \prod_{j\in [m]\setminus I} |I_j|.\]

We check that this product is equivalent to \[2^{i_2-i_1-1}3^{i_3-i_2-1}4^{i_4-i_3-1}\cdots (k-1)^{i_{k-1}-i_{k-2}-1}k^{m-i_{k-1}}\frac{(k+n-m)!}{(n-m)!}.\] 

Let $j \in [m]\setminus I$. By the same reasoning as in the proof of \Cref{thm:characterizing outcomes}, we have 
\[
\prod_{j\in {[m]\setminus I}} |I_j| = (1)^{i_1-2}(2)^{i_2-i_1-1} (3)^{i_3-i_2-1} \cdots(k-1)^{i_{k-1}-i_{k-2}-1} k^{m-i_{k-1}}.
\]
By substitution, it follows that 
\al{
\frac{(k+n-m)!}{(n-m)!} \prod_{j\in [m]\setminus I} |I_j| &= \frac{(k+n-m)!}{(n-m)!} (1)^{i_1-2}(2)^{i_2-i_1-1}(3)^{i_3-i_2-1} \cdots(k-1)^{i_{k-1}-i_{k-2}-1} k^{m-i_{k-1}}\\
&= \frac{(k+n-m)!}{(n-m)!}(2)^{i_2-i_1-1}(3)^{i_3-i_2-1} \cdots(k-1)^{i_{k-1}-i_{k-2}-1} k^{m-i_{k-1}}.\qedhere
}
\end{proof}

Using the formula in \Cref{thm:count for outcomes in mn}, we obtain a formula for the number of outcomes of parking functions when the first $k$ cars are lucky. 
\begin{corollary}\label{cor:again}
If $I = \{1, 2, 3, \ldots, k\}\subseteq[m]$ is a lucky set of $\PF_{m,n}$, then
\[
|\out_{m,n}(I)| = k^{m-k}\frac{(n-m+k)!}{(n-m)!}.
\]
\end{corollary}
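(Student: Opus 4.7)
The plan is to obtain this as an immediate specialization of \Cref{thm:count for outcomes in mn} applied to the particular lucky set $I = \{1, 2, \ldots, k\}$. Since the theorem provides two equivalent expressions, I will use the cleaner product form
\[
|\out_{m,n}(I)| \;=\; \frac{(k+n-m)!}{(n-m)!} \prod_{j\in [m]\setminus I} |I\cap [j]|,
\]
as it avoids bookkeeping through the exponent form.

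First I would compute the complement: when $I = \{1, 2, \ldots, k\}$, we have $[m]\setminus I = \{k+1, k+2, \ldots, m\}$. For any $j$ in this complement, $j > k$, so every element of $I$ lies in $[j]$, giving $|I \cap [j]| = k$. The product therefore has exactly $m - k$ factors, all equal to $k$, and collapses to $k^{m-k}$. Substituting back yields $|\out_{m,n}(I)| = k^{m-k} \cdot \frac{(n-m+k)!}{(n-m)!}$, as claimed.

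As a sanity check using the second expression from \Cref{thm:count for outcomes in mn}: here $i_\ell = \ell + 1$ for $\ell = 1, \ldots, k-1$, so each difference $i_{\ell+1} - i_\ell - 1$ in the exponents equals $0$, killing every base from $2$ through $k-1$; the final exponent is $m - i_{k-1} = m - k$, and the surviving factor is $k^{m-k} \cdot \frac{(k+n-m)!}{(n-m)!}$, in agreement.

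There is no real obstacle here: the content of the corollary is entirely contained in the preceding theorem, and the only step is to verify that for the initial-segment lucky set $I = [k]$, the product $\prod_{j \in [m] \setminus I} |I \cap [j]|$ degenerates to a single power of $k$. Still, it may be worthwhile to remark that this recovers and extends \Cref{cor:formula} to the setting of more spots than cars: setting $m = n$ collapses the factorial ratio to $k!$, recovering $k! \cdot k^{n-k}$.
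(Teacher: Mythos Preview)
Your proposal is correct and is essentially the same as the paper's approach: the corollary is stated there as an immediate specialization of \Cref{thm:count for outcomes in mn}, and your computation of $\prod_{j\in[m]\setminus I}|I\cap[j]|=k^{m-k}$ is exactly the required verification. The additional sanity check via the exponent form and the remark linking back to \Cref{cor:formula} when $m=n$ are fine observations but not needed for the argument.
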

We note that \Cref{cor:again} gives an alternative expression for the count given in \cite[Theorem 3.4]{harrismartinez}.
Next we determine how many outcomes there are such that exactly $k$ cars are lucky. Note that this cannot be obtained by simply summing the formula above over all possible lucky sets of size $k$, since this would cause us to overcount. See \Cref{ex:two lucky set one outcome}.

\begin{example}\label{ex:two lucky set one outcome}
If $m=n=5$, the parking function $(1,2,1,1,1)$ has outcome $\pi=12345$ and lucky set $\{1,2\}$.  Also, the parking function $(1,1,3,1,1)$ has outcome $\pi'=12345$ and lucky set $\{1,3\}$. So this single outcome occurs in correspondence with two different lucky sets of the same size.
\end{example}

\begin{lemma}
If $S$ is an outcome of an $(m,n)$-parking function in which $k$ cars are lucky, then $S$ is also an outcome for a parking function in which $\ell$ cars are lucky, for any $k<\ell\leq m$. Consequently the number of outcomes in which at most $k$ cars are lucky is the same as the number of outcomes in which exactly $k$ cars are lucky.
\end{lemma}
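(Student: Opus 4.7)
The plan is to prove the main statement by iteratively promoting one unlucky car to lucky while preserving the outcome. Let $\alpha\in\PF_{m,n}$ have outcome $S$ with lucky set $I$ of size $k$, and pick any car $c\in[m]\setminus I$. Let $p$ be the spot where car $c$ parks under $\alpha$, so $p>a_c$ because $c$ is unlucky. Define $\alpha'$ by replacing the preference $a_c$ with $p$ in $\alpha$, leaving all other preferences unchanged.

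Next, I would verify that $\alpha'\in\PF_{m,n}$, that $\mathcal{O}(\alpha')=\mathcal{O}(\alpha)=S$, and that $\Lucky(\alpha')=I\cup\{c\}$. The key observation is that cars $1,\ldots,c-1$ have identical preferences in $\alpha$ and $\alpha'$, so the state of the lot when car $c$ enters is the same in both processes. Under $\alpha$ car $c$ drove past the occupied spots $a_c,a_c+1,\ldots,p-1$ and parked at $p$, witnessing that spot $p$ was available upon $c$'s arrival; under $\alpha'$ car $c$ now prefers $p$ and parks there immediately. Thus car $c$ occupies $p$ in both processes, the lot state after $c$ parks agrees, and the remaining cars $c+1,\ldots,m$ park in the same spots. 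Car $c$ becomes lucky in $\alpha'$ while every other car's lucky status is unchanged, so $\Lucky(\alpha')=I\cup\{c\}$. Iterating this promotion step by choosing unlucky cars one at a time, I obtain parking functions whose outcome is $S$ and whose lucky sets have every size in $\{k,k+1,\ldots,m\}$, which proves the first assertion.

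For the ``consequently'' clause, let $A_\ell\subseteq\Sym_{m,n}$ denote the set of outcomes produced by $(m,n)$-parking functions with exactly $\ell$ lucky cars. The main part shows $A_k\subseteq A_\ell$ for every $k\leq\ell\leq m$, so the family is nested as $A_1\subseteq A_2\subseteq\cdots\subseteq A_m$. Hence $\bigcup_{\ell\leq k}A_\ell=A_k$, and the two counts agree.

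The only delicate step is the invariance of the outcome under the single-preference modification. This is essentially routine once one notes that a car's parking spot is determined by the state of the lot on its arrival together with the fact that its preference is at most that spot, both of which the construction preserves; nevertheless it must be stated explicitly, since this is the only place where one uses that the promoted car's preference can be freely raised to the spot it actually occupies.
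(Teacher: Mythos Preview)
Your proposal is correct and follows essentially the same approach as the paper: promote unlucky cars to lucky by resetting their preferences to the spots where they actually park, then use the resulting nesting $A_1\subseteq A_2\subseteq\cdots\subseteq A_m$ to conclude. The paper does the promotion of $\ell-k$ cars in one step rather than iteratively, and is terser about why the outcome is preserved, but the argument is the same.
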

\begin{proof}
Suppose that $k$ cars in $S$ are lucky. We may specify $\ell -k$ unlucky cars in $S$ and alter the corresponding parking function so that those cars' preferences are now the spots where they parked. This yields a parking function with outcome $S$ in which now $\ell$ cars are lucky. So any outcome for which $k$ cars are lucky is also an outcome for which $\ell>k$ cars are lucky. 
    
That is, if $\out_{m,n}(k)$ denotes the set of outcomes of $(m,n)$-parking functions with exactly $k$ lucky cars, then $\out_{m,n}(1) \subseteq \out_{m,n}(2) \subseteq \cdots \subseteq \out_{m,n}(m)$. The number of parking functions in which at most $k$ cars are lucky is given by
\[
\bigcup_{i=1}^k \out_{m,n}(i) = \out_{m,n}(k),
\]
since all sets in the union are equal to or contained in $\out_{m,n}(k)$.
\end{proof}

Let $\genfrac{\langle}{\rangle}{0pt}{}{n}{k}$ denote the Eulerian number $A(n,k)$, i.e., the number of permutations of $[n]$ with exactly $k$ descents. We now connect this sequence to the number of outcomes of $(m,n)$-parking functions.

\begin{theorem}\label{thm:mn pf with k lucky}
    Let $1\leq m \leq n $. The number of outcomes for $(m,n)$-parking functions in which exactly $k$ cars are lucky is given by
    \[   \left|\bigcup_{|I|=k}\out_{m,n}(I) \right|=\sum_{j=0}^{k-1} \left[\sum_{d=0}^{k-j-1} \binom{m-j-1}{d}  \binom{n-m+j+1}{j+1+d}\right] \genfrac{\langle}{\rangle}{0pt}{}{m}{j}.
    \]
\end{theorem}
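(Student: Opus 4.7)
The plan is to reduce to a count of outcomes by the size of a canonical minimum lucky set, and then do a stars-and-bars calculation stratified by the underlying permutation of $[m]$. By the preceding lemma, $\bigcup_{|I|=k}\out_{m,n}(I)$ equals the set of outcomes $\pi \in \Sym_{m,n}$ for which \emph{some} valid lucky set of size at most $k$ exists. The first step is to verify a monotonicity statement: if $I \subseteq I' \subseteq [m]$ and $I$ satisfies conditions (1)--(3) of \Cref{thm:characterizing outcomes in mn} relative to $\pi$, then so does $I'$. Conditions (1) and (3) only force certain cars into the lucky set, so they are immediately monotone. For condition (2), I would use the equivalent reformulation from the proof of \Cref{thm:count for outcomes in mn} that (2) and (3) together read: for $\pi_{i-1}, \pi_i \in [m]$, if $\pi_i \notin I$ then $\pi_{i-1} < \pi_i$; this hypothesis only weakens as $I$ grows. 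Hence there is a well-defined minimum $I_{\min}(\pi)$, and $\pi$ lies in the union iff $|I_{\min}(\pi)| \leq k$.

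Next I would compute $|I_{\min}(\pi)|$ by decomposing $\pi$. Let $\sigma = \sigma_1 \sigma_2 \cdots \sigma_m \in \Sym_m$ be the permutation obtained by deleting all $\X$'s from $\pi$; then $\pi$ is recovered by distributing $n-m$ indistinguishable $\X$'s into the $m+1$ gaps of $\sigma$ (before $\sigma_1$, between each consecutive pair, and after $\sigma_m$). Applying the conditions, $\sigma_1$ is always in $I_{\min}(\pi)$, and for $i \geq 1$, $\sigma_{i+1}$ is in $I_{\min}(\pi)$ iff either $\sigma_i > \sigma_{i+1}$ in $\sigma$ (a descent) or the gap between $\sigma_i$ and $\sigma_{i+1}$ contains at least one inserted $\X$. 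Writing $j$ for the number of descents of $\sigma$ and $d$ for the number of ascent gaps of $\sigma$ containing at least one $\X$, this gives $|I_{\min}(\pi)| = 1 + j + d$, so $\pi$ lies in the union iff $j + d \leq k-1$.

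For the enumeration I stratify by $\sigma$ and by $d$. For each $j \in \{0,1,\ldots,k-1\}$ there are $\genfrac{\langle}{\rangle}{0pt}{}{m}{j}$ permutations $\sigma \in \Sym_m$ with $j$ descents. For each such $\sigma$ and each $d \in \{0,1,\ldots,k-1-j\}$, I choose which $d$ of the $m-1-j$ ascent gaps of $\sigma$ are the nonempty ones ($\binom{m-j-1}{d}$ ways) and then distribute the $n-m$ $\X$'s among those $d$ specified ascent gaps (each receiving at least one) together with the remaining $j+2$ non-ascent gaps (the two boundary gaps and the $j$ descent gaps, each receiving any non-negative count). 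After the standard substitution $y_i = x_i - 1$ for the $d$ lower-bounded bins, stars and bars yields $\binom{n-m+j+1}{j+1+d}$ distributions. Multiplying these three factors and summing over $j$ and $d$ with $j + d \leq k - 1$ recovers the claimed formula.

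The main obstacle is the monotonicity step together with the precise identification of $I_{\min}(\pi)$; once these are in hand, the identity $|I_{\min}(\pi)| = 1 + j + d$ is immediate and the remaining enumeration is a routine stars-and-bars computation. A small secondary check is that the implicit upper bounds $d \leq m-1-j$ and $d \leq n-m$ are already enforced by the vanishing of the relevant binomial coefficients, so the sum as written requires no extra truncation.
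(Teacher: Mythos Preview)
Your proposal is correct and follows essentially the same approach as the paper: both strip the $\X$'s to obtain an underlying permutation $\sigma\in\Sym_m$ with $j$ descents, identify the extra forced-lucky cars as those $\sigma_{i+1}$ lying at ascents of $\sigma$ whose gap receives at least one $\X$ (your parameter $d$), and then perform the same $\binom{m-j-1}{d}$ choice followed by the same stars-and-bars distribution of the remaining $\X$'s among the $j+d+2$ unrestricted gaps. Your explicit formulation via a minimal lucky set $I_{\min}(\pi)$ with $|I_{\min}(\pi)|=1+j+d$ makes the monotonicity cleaner than the paper's more informal phrasing, but the decomposition and the arithmetic are identical.
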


\begin{proof}
As noted earlier, only lucky cars can form descents with the car to their left. Suppose there are a total of $k$ lucky cars, and in the outcome, we have $j$ lucky cars that form descents; that is, $j$ lucky cars which park directly to the right of a car of higher index. Since the car that parks in the leftmost occupied spot is always lucky, it follows that $j\leq k-1$.

Also, for $n\geq m$, we have $n-m$ unoccupied spots in the outcome. We count the possible outcomes with $j$ descents and $n-m$ unoccupied spots, for all possible $j$.

Fix $j \in \{0,1,\ldots, k-1\}$. Omitting the unoccupied spots, we may view the outcome as a  permutation of $[m]$ with $j$ descents. There are $\genfrac{\langle}{\rangle}{0pt}{}{m}{j}$ such options.

Let $d$ be the number of lucky cars that park directly to the right of an unoccupied spot, not including the leftmost car (which may or may not appear to the right of an unoccupied spot). Then $d+j \leq k-1$, since the leftmost car is always lucky and is not included in those sets counted by $j$ and $d$. 
Among the $m$ total cars, we have already selected $j$ of them to form descents, and the leftmost one is excluded from the count, so we select $d$ among the remaining $m-j-1$ cars to park directly to the right of unoccupied spots. There are $\binom{m-j-1}{d}$ ways to choose them. 

Once these $d$ cars are chosen, we know the positions of $d$ unoccupied spots in the outcome. It remains to place the remaining $n-m-d$ unoccupied spots. These cannot be placed to the left of unlucky cars, since this would cause those cars to be lucky. If they were to be placed directly to the left of a lucky car that did not form a descent, other than the leftmost lucky car, they would have already been placed when the $d$ such cars were selected. It remains that the $n-m-d$ unoccupied spots may be placed either to the left of one of the $d$ existing unoccupied spots, to the left of one of the $j$ descents, at the far left before all occupied spots, or at the far right, after all cars. We may view this as having $n-m-d$ $\X$'s and $j+d+1$ {contiguous subsequences whose restriction to integers form increasing runs}, beginning with either descents, the leftmost car, or the $d$ unoccupied spots placed earlier, and we must choose how to place the $\X$'s among them. These runs appear in a fixed relative order. The number of such arrangements is counted by \[ \binom{n-m-d+j+d+1}{j+d+1} = \binom{n-m+j+1}{j+d+1}.\]

To find the total number of possible outcomes, we apply these counts summing over all possible $j$ and $d$. This yields
\[
    \sum_{j=0}^{k-1} \genfrac{\langle}{\rangle}{0pt}{}{m}{j} \left[\sum_{d=0}^{k-j-1} \binom{m-j }{d} \binom{n-m+j+1}{ j+d+1} \right],
\]  
as claimed.
\end{proof}

Simplifying the formula in \Cref{thm:mn pf with k lucky} to the case where $m=n$, we have
\begin{align}
\label{eq:1}\sum_{j=0}^{k-1}\left[\sum_{d=0}^{k-j-1}\binom{n-j-1}{d}\binom{j+1}{j+1+d}\right]\genfrac{\langle}{\rangle}{0pt}{}{n}{j}.\end{align}
Now notice $\binom{j+1}{j+1+d}=0$ for all $d\geq 1$, so we can further simplify \Cref{eq:1} to

\begin{align*}
\sum_{j=0}^{k-1}\left[\binom{n-j-1}{0}\binom{j+1}{j+1}\right]\genfrac{\langle}{\rangle}{0pt}{}{n}{j}=&\sum_{j=0}^{k-1}\genfrac{\langle}{\rangle}{0pt}{}{n}{j},
\end{align*}
which counts the permutations with at most $k-1$  descents. We recall that every descent in an outcome forces a lucky car. Moreover, the car parking in the first spot is always lucky, but this is not a descent of the permutation. Hence we see that this gives the count of possible outcomes having exactly $k$ lucky cars. Thus, we have established the following.
\begin{corollary}
    For any $n\geq 1$, the number of outcomes of elements in $\PF_n$ having exactly $k$ lucky cars is given by 
    $\sum_{j=0}^{k-1}\genfrac{\langle}{\rangle}{0pt}{}{n}{j}$,
which counts the permutations with at most $k-1$  descents.
\end{corollary}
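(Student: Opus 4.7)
The plan is to simply specialize the formula in \Cref{thm:mn pf with k lucky} to the case $m = n$, which is actually already essentially carried out in the paragraph preceding the corollary. First I would substitute $m = n$ into
\[
\sum_{j=0}^{k-1} \left[\sum_{d=0}^{k-j-1} \binom{m-j-1}{d}  \binom{n-m+j+1}{j+1+d}\right] \genfrac{\langle}{\rangle}{0pt}{}{m}{j},
\]
obtaining
\[
\sum_{j=0}^{k-1}\left[\sum_{d=0}^{k-j-1}\binom{n-j-1}{d}\binom{j+1}{j+1+d}\right]\genfrac{\langle}{\rangle}{0pt}{}{n}{j}.
\]
The key observation is that $\binom{j+1}{j+1+d} = 0$ whenever $d \geq 1$, so the inner sum collapses entirely to its $d = 0$ term, contributing $\binom{n-j-1}{0}\binom{j+1}{j+1} = 1$. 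This immediately yields $\sum_{j=0}^{k-1}\genfrac{\langle}{\rangle}{0pt}{}{n}{j}$, matching the claimed formula.

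For the combinatorial interpretation, I would invoke the by-now standard observation (used repeatedly in the paper, and going back to Harris--Martinez) that in an outcome of a classical parking function, every descent of the outcome permutation forces the right-hand entry of that descent to be a lucky car, while the leftmost car is automatically lucky without contributing a descent. Thus a permutation in $\Sym_n$ with exactly $j$ descents, viewed as a candidate outcome, has at least $j+1$ lucky cars, and conversely, \Cref{thm:characterizing outcomes} guarantees that any such permutation arises as an outcome. Combining this with the nesting $\out_{n}(1)\subseteq \out_n(2)\subseteq\cdots\subseteq \out_n(n)$ (which is the $m=n$ case of the lemma preceding \Cref{thm:mn pf with k lucky}) identifies the outcomes with at most $k$ lucky cars with permutations of $[n]$ having at most $k-1$ descents, and hence the set counted is exactly enumerated by $\sum_{j=0}^{k-1}\genfrac{\langle}{\rangle}{0pt}{}{n}{j}$ by the defining property of the Eulerian numbers.

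No step here is particularly delicate; the main point that must be handled carefully is making sure the $d=0$ simplification is justified for all $j$ in range, and that the boundary case $k=1$ (where only the $j=0$ term survives and one recovers $\genfrac{\langle}{\rangle}{0pt}{}{n}{0} = 1$, the single increasing outcome) is consistent. The only other thing worth stating explicitly is that the nested inclusion $\out_n(i)\subseteq \out_n(i+1)$ specialized to $m=n$ requires noting that the argument in the preceding lemma (switching an unlucky car's preference to its actual parking spot) produces a new classical parking function with the same outcome, which is immediate.
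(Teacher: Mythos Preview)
Your proposal is correct and follows essentially the same approach as the paper: substitute $m=n$ into \Cref{thm:mn pf with k lucky}, observe that $\binom{j+1}{j+1+d}=0$ for $d\ge 1$ so only the $d=0$ term survives, and then interpret the resulting sum combinatorially via the descent/lucky-car correspondence. Your additional remarks on the nesting $\out_n(1)\subseteq\cdots\subseteq\out_n(n)$ and the $k=1$ boundary case are fine elaborations but not needed beyond what the paper already uses.
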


\section{Counting outcomes for vector parking functions}\label{sec:outcomes for vector pfs}

In this section, we begin by describing all of the possible lucky sets for $\bsy{u}$-parking functions. 
Then, we consider counting outcomes for families of $\bsy{u}$-parking functions for various $\bsy{u}$. We first recall, as described in \cite[pg.\ 4]{2024primevectorparking}, that $\bsy{u}$ is a vector that encodes the capacities of the parking spots on the street. If $\bsy{u}=(u_1,u_2,\ldots,u_n)$ with $1\leq u_1\leq u_2\leq \cdots \leq  u_n $ and $u_i\in \NN=\{1,2,\ldots\}$ then the capacity of spot $i$ is the multiplicity of $i$ in the vector $\bsy{u}$.

\begin{definition}
Let $\bsy{a}=(a_1,a_2,\ldots,a_{n})$ be a $\bsy{u}$-parking function. We say that $\bsy{a}$ has \emph{lucky set} $I$ if all cars indexed by $I$ can park in their preferred spot, and all other cars park in a spot other than their preference. We denote this by $\Lucky_{\bsy{u}}(\bsy{a})=I$.
\end{definition}

The first observation is that in contrast to classical parking functions, $\bsy{u}$-parking functions need not have~$1$ appear in their lucky sets. Consider the following example.

\begin{example}
If $\bsy{u}=(1,1,3,3,3)$ and $\bsy{a}=(2,1,2,1,1)$, then $\Lucky_{\bsy{u}}(\bsy{a})=\{2,4\}$ because cars $2$ and $4$ park in their preferred spot $1$ and cars $1,3$ and $5$ park in spot $3$.
\end{example}

\begin{lemma}\label{lemma: if spot 1 is available}
If $m_{1}(\bsy{u})=m$, i.e. the multiplicity of $1$ in $\bsy{u}$ is $m$, then every car with preference $1$ is lucky.
\end{lemma}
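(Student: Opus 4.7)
The plan is to exploit the monotonicity of the vector parking rule: a car at spot $j$ either parks there or advances to spot $j+1$, but it never retreats to an earlier spot. Since spot 1 is the leftmost spot in the street, no car can ever arrive at spot 1 by moving forward from elsewhere. Consequently, the only cars that can ever occupy any of the $m$ slots at spot 1 are those whose preference was already 1.

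Concretely, I would proceed as follows. First, I would formalize the no-backtracking property as a direct consequence of the parking rule given in the introduction. Second, I would use this to argue that at every stage of the parking process, the multiset of cars already sitting in spot 1 is contained in $\{i \in [n] : a_i = 1\}$; no car with preference $\ge 2$ ever reaches spot 1. Third, I would take an arbitrary car $i$ with $a_i = 1$: it drives directly to spot 1. Because the $m$ slots of spot 1 are accessible only to cars preferring spot 1, car $i$ finds room there upon arrival and parks at spot 1. Since spot 1 is its preferred spot, $i \in \Lucky_{\bsy{u}}(\bsy{a})$ by definition.

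I do not anticipate a deep technical obstacle here; the entire argument is a one-line consequence of the directional nature of the parking rule combined with spot 1 being the leftmost spot. The main care needed is in articulating the monotonicity observation cleanly, and then in combining it with the capacity count $m_1(\bsy{u}) = m$ to conclude that spot 1 is guaranteed to accommodate every car that prefers it, so every such car parks at its preference and is lucky.
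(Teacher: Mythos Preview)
Your argument has a genuine gap at the crucial step. You correctly observe that only cars with preference $1$ can ever occupy spot $1$ (by the no-backtracking property). But from this you conclude that an arbitrary car $i$ with $a_i = 1$ ``finds room there upon arrival.'' This does not follow: nothing prevents more than $m$ cars from preferring spot $1$. For a concrete counterexample, take $\bsy{u} = (1,2)$ (so $m_1(\bsy{u}) = 1$) and $\bsy{a} = (1,1)$. Car $2$ prefers spot $1$, finds it full, and parks in spot $2$---unlucky. Your capacity argument only shows that at most $m$ cars \emph{park} in spot $1$, not that at most $m$ cars \emph{prefer} it.

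In fact the lemma as literally stated is false, and the paper's own proof establishes a different (and correct) claim: all cars that \emph{park} in spot $1$ are lucky. This is the version actually used later (in the proof of Theorem~\ref{thm:upfs characterization}, where condition (1) asserts $B_1 \subseteq I$). Your monotonicity observation is exactly what proves this correct version: any car parked in spot $1$ must have preferred spot $1$ (since it cannot have arrived there from the right), hence it parked in its preference and is lucky. So your first two steps already give the right argument---just drop the third step and rephrase the conclusion to be about cars that park in spot $1$ rather than cars that prefer it.
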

\begin{proof}
As in the classical case, the car that parks in spot $1$ is lucky \cite[Theorem 2.3]{harrismartinez}. Hence, if spot $1$ has capacity $m$, all $m$ cars that park in spot $1$ are lucky.
\end{proof}

Next, we formally define the outcome of a $\bsy{u}$-parking function.

\begin{definition}\label{def:outcome of vector parking}
Let $\bsy{u}=(u_1,u_2,\ldots,u_n)$, with $M=\max(\bsy{u})$.
Given a $\bsy{u}$-parking function $\bsy{a}=(a_1,a_2,\ldots,a_n)$, the \emph{outcome} of $\bsy{a}$ is denoted by \[ \out_{\bsy{u}}(\bsy{a})=B_1B_2\ldots B_M,\] 
where $B_i$ contains the set of cars (in increasing order) that parked in spot $i$, and if a spot $i$ is unavailable (i.e., because $m_i(\bsy{u})=0$), then $B_i=\X$, which means no cars can park there.
We denote by $\out_{\bsy{u}}$ the set of all possible outcomes of $\bsy{u}$-parking functions.
\end{definition}

\begin{example}\label{ex:outcome def and lucky}
If $n=5$, $\bsy{u}=(1,1,3,3,3)$, and $\bsy{a}=(2,1,2,1,2)$, then the outcome is the ordered set partition of $\{\X\}\cup[5]$ given by $B=B_1B_2B_3=\{2,4\}\X\{1,3,5\}$,  because cars $2$ and $4$ parked in spot $1$, cars $1, 3$ and $5$ parked in spot $3$, and spot $2$ is unavailable since it does not appear in the vector $\bsy{u}$. 
Furthermore, cars $2$ and $4$ both park in their preferred spot; namely, spot $1$. 
All other cars park in a spot other than their preferred spot. 
Hence, the $\bsy{u}$-parking function $\bsy{a}$ has lucky set $I = \{2,4\}$.
\end{example}

Next, we observe that not every ordered set partition can be the outcome of a $\bsy{u}$-parking function with a fixed set of lucky cars. 

\begin{example}
Let $\bsy{a}$ be a $\bsy{u}$-parking function with $\bsy{u}=(2,3,5,5)$ and suppose that $\Lucky_{\bsy{u}}(\bsy{a})=\{3\}$. 
Notice that $B=\X\{4\}\{2\}\X\{1,3\}$ is not a possible outcome since when car $2$ entered the street, car $4$ had not entered the street and so spot $2$ would be empty. This means that in order for car $2$ to park in spot $3$, it would have to be its preference. Thus, the only way for car $2$ to park in spot $3$ is if it preferred that spot, in which case car $2$ would be lucky, implying $2\in \Lucky_{\bsy{u}}(\bsy{a})$, which is a contradiction. 
\end{example}

In the case of the classical parking functions, descents forced certain cars to be lucky. This still holds for $\bsy{u}$-parking functions. However, if a car follows an unavailable spot, then that car may or may not be lucky. We now give a characterization of the types of outcomes that arise in a $\bsy{u}$-parking function with a fixed set of lucky cars. Recall that $ \LPF_{\bsy{u}}(I)$ denotes the set of $\bsy{u}$-parking functions that have lucky set of cars $I$ and that $\out_{\bsy{u}}(I)$ denotes the set of outcomes of all $\bsy{u}$-parking functions with lucky set $I$.

\begin{theorem}\label{thm:upfs characterization}
Fix a lucky set of cars $I$, let $M=\max{(\bsy{u})}$, and let $B=B_1B_2\ldots B_M\in \O_{\bsy{u}}\coloneqq\{\out_{\bsy{u}}(\bsy{a})~:~\bsy{a}\in\PF(\bsy{u})\}$.
Then $B \in \O_{\bsy{u}}(I)$
if and only if the following hold:
\begin{enumerate}
    \item if $B_1\neq \X$, then $B_1\subseteq I$,
    \item if there exists a $b_1\in B_{i-1}\neq \X$ and $b_2\in B_{i}\neq \X$ such that $b_1>b_2$, then $b_2\in I$.
\end{enumerate}
\end{theorem}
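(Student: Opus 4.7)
My plan is to prove the biconditional by handling each direction separately: the forward direction via a direct analysis of the parking rule, and the reverse via an explicit construction of a preference list.

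For the forward direction, I would assume $B=\out_{\bsy u}(\bsy a)$ with $\Lucky_{\bsy u}(\bsy a)=I$. Condition (1) follows at once: if $j\in B_1$ then car $j$ parks in spot $1$, so its preference satisfies $1\le a_j\le 1$, forcing $a_j=1$ and hence $j\in I$ (cf.\ \Cref{lemma: if spot 1 is available}). For condition (2), suppose $b_1\in B_{i-1}$, $b_2\in B_i$ with both blocks non-$\X$ and $b_1>b_2$. Since the total capacity equals $n$, every occupied spot is filled to capacity at the end of the process, so $|B_{i-1}|=m_{i-1}(\bsy u)$. Arguing by contradiction, assume $b_2\notin I$; then $a_{b_2}\le i-1$ and the parking rule forces every positive-capacity spot in $\{a_{b_2},a_{b_2}+1,\dots,i-1\}$ to be full the moment $b_2$ enters. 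In particular, spot $i-1$ would then contain $m_{i-1}(\bsy u)$ cars. But $b_1>b_2$ has not yet arrived, so at most $|B_{i-1}|-1$ cars of $B_{i-1}$ are present, a contradiction.

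For the reverse direction, given $B\in\O_{\bsy u}$ satisfying (1) and (2), I would define $\bsy a$ by
\[
a_j=\begin{cases} i,& \text{if } j\in I\cap B_i,\\ i-1,& \text{if } j\in B_i\setminus I,\end{cases}
\]
which is well-defined because condition (1) forbids $j\in B_1\setminus I$, ensuring $i\ge 2$ in the second case. I would then show by induction on $j$ that car $j$ parks in spot $i$, where $j\in B_i$. For $j\in I$, spot $i$ contains at most $|\{c\in B_i : c<j\}|\le |B_i|-1<m_i(\bsy u)$ cars when $j$ arrives, so $j$ parks in $i$. For $j\notin I$, car $j$ first visits spot $i-1$: if $B_{i-1}=\X$ the spot has capacity zero and $j$ advances to $i$; if $B_{i-1}\ne \X$ then condition (2) forces $\max B_{i-1}<j$, the inductive hypothesis says every car of $B_{i-1}$ has already parked, spot $i-1$ is full, and $j$ again advances to $i$, which still has capacity. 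Thus the outcome is $B$, and since $a_j$ equals $j$'s parking spot precisely when $j\in I$, we have $\Lucky_{\bsy u}(\bsy a)=I$.

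The main subtlety I anticipate is the unlucky case in the induction when $B_{i-1}=\X$: condition (2) gives no direct constraint between $B_i$ and any earlier non-$\X$ block, so one might worry that choosing $a_j=i-1$ could leave cars from blocks $B_1,\dots,B_{i-2}$ stranded or redirect $j$ somewhere unexpected. The observation that resolves this is that a car whose preferred spot has capacity zero never interacts with any earlier spot and simply advances to the next available spot, so the uniform rule $a_j=i-1$ for unlucky cars routes $j$ to $i$ without any further case analysis.
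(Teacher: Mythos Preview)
Your proof is correct and follows essentially the same approach as the paper: the forward direction is identical, and for the reverse direction you use the very same preference list (lucky cars prefer their spot, unlucky cars prefer one spot to the left) and the same key observation that condition~(2) forces $\max B_{i-1}<j$ whenever $j\in B_i\setminus I$. The only cosmetic difference is that you verify the outcome equals $B$ by a direct induction on the car index, whereas the paper argues by contradiction via a minimal differing car; your presentation is arguably cleaner.
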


\begin{proof}
Let $B=B_1B_2\ldots B_M$, where $M=\max{(\bsy{u})}$.

($\Rightarrow$) For the forward direction, we verify that when $B=B_1B_2\ldots B_M$ is the outcome of a $\bsy{u}$-parking function with lucky set of cars $I$, it satisfies each of the conditions. 
\begin{enumerate}
    \item This follows by \Cref{lemma: if spot 1 is available}. 
    \item Suppose that there exists a $b_1\in B_{i-1}\neq \X$ and $b_2\in B_{i}\neq \X$ such that $b_1>b_2$. This implies that car $b_2$ parks on the street before car $b_1$ enters the street to park. Thus, when car $b_2$ entered the street to park, spot $i-1$ was not completely occupied. Assume for contradiction that car $b_2$ has a preference $1\leq a_i\leq i-1$. Then car $b_2$ would park in the first available spot past $a_i$. Among the spots numbered $a_i,a_i+1,\ldots, i-1$, at least spot $i-1$ still has not reached capacity. So car $b_2$ would park in spot $x$ satisfying $a_i\leq x\leq i-1<i$, contradicting the assumption that car $b_2$ parks in spot $i$. Thus, car $b_2$ parks in spot $i$ and must have done so by preferring spot $i$. This makes car $b_2$ a lucky car, and so whenever $b_1\in B_{i-1}\neq \X$ and $b_2\in B_i\neq \X$ we have that $b_2\in I$, as claimed.
\end{enumerate}

($\Leftarrow$) Let $B=B_1B_2\ldots B_M $ satisfy conditions $(1)$ and $(2)$ above. We construct a $\bsy u$-parking function whose lucky set is $I$ and outcome is $B$. Let $\bsy{a} = (a_1,\ldots, a_n)$ be a vector of preferences, in which car $i$ prefers spot $a_i$. For each car $h$, we set $a_h$ to be the spot where car $h$ appears in $B$. It follows that the outcome of $\bsy a$ is $B$, and all cars are lucky in $\bsy a$.
We construct a parking function $\bsy{a}'$ in which only the cars indexed by $I$ are lucky. For each car $i$ in $I$, let $a_i' = a_i$. Otherwise for $j \notin I$, let $a_j' = a_j-1$. Since $j \notin I$, condition (1) implies $a_j \neq 1$, so $a_j'\geq 1$. Hence $\bsy{a}'$ is a vector consisting of positive integers with $a_h'\leq a_h$ for all $h$. We know that $\bsy{a}$ is a valid $\bsy u$-parking function since all cars are able to park. Then by the previous inequality, $\bsy{a}'$ also satisfies the definition of a $\bsy u$-parking function.

Suppose for contradiction that the outcome of $\bsy{a}'$ is different from the outcome of $\bsy{a}$. Let $j$ be the car of lowest index whose parking spot differs in the outcomes of $\bsy a$ and $\bsy a'$. Only unlucky cars' preferences are changed.
If $j$ were a lucky car which ended up parking in a different spot, then the parking spot of an earlier unlucky car must have been changed. By minimality of $j$, we have that $j$ is then unlucky.   Then if car $j$ parks in spot $d$ in the outcome of $\bsy a$, its preference is changed to $d-1$ in $\bsy a'$, and, since the spot that $j$ parks in changes in the outcome of $\bsy a'$, we have that $j$ is now able to park in spot ${d-1}$ in the outcome of $\bsy a'$. 
This is only possible if spot ${d-1}$ is available and not entirely occupied when car $j$ attempts to park, which implies that in the outcome of $\bsy a$, there must have been some car $f>d$ which parks in spot $B_{d-1}$.
However, by property (2), this implies $j\in I$, meaning $j$ was not unlucky, which is a contradiction.
Then $\bsy a'$ is a valid $\bsy u$-parking function with outcome $B$ and lucky set $I$. So if $B$ satisfies (1) and (2), then $B \in \out_{\bsy u}(I)$.
\end{proof}

\subsection{Counting outcomes when $k$ cars are lucky and the vector has no repetition}\label{subsec: k cars lucky no rep}
In this section, we consider removing $m$ spots from a street of length $n+m$ and let $n$ be the number of cars. Moreover, we let $\bsy{u}=(u_1,u_2,\ldots, u_n)$ where none of the entries of $\bsy{u}$ repeat. Later, we consider the case in which there are repeated entries in the vector $\bsy{u}$. It turns out that $\bsy{u}$-parking functions are a generalization of \emph{parking completions}, in which all spots have capacity 1, and various spots are presumed to be unavailable. Suppose there are $m$ of the $n+m$ spots already taken. Then, a parking completion $a=(a_1,a_2,\ldots, a_n)$  for a sequence $s=(s_1,s_2,\ldots,s_m)$ is a sequence in which all $n$ cars can successfully park where the entries of $s$ denote which spots are unavailable in increasing order \cite{completions}. In this context, $\bsy{u}$ is a vector that contains the $n$ spots that are available on the street. More precisely, \[\bsy{u}=(1,2,\ldots, s_1-1, s_1+1, s_1+2, \ldots, s_2-1, s_2+1,\ldots, s_{m-1}-1,s_{m-1}+1,\ldots, s_m-1,s_m+1,\ldots, n+m),\] where $\bsy{u}$ is of length $n$.

\begin{example}
Let $n=3$ and $\bsy{u}=(2,4,6)$. Then, $s=(1,3,5)$ and some possible $\bsy{u}$-parking functions or parking completions are $(1, 1, 1),
 (1, 1, 6),
 (1, 3, 6),
 (1, 5, 2),
 (1, 5, 3),
 (1, 5, 4)$.
\end{example}

Throughout this section, we fix the size of a lucky set $I$ and let $|I|=k$. We stress that in fixing the size of the lucky set, we are not concerned with the elements in the lucky set. We give a formula in terms of the Eulerian numbers that counts the number of distinct outcomes when $|I|=k$. We only count distinct outcomes, as two different lucky sets with the same size could yield the same outcome. See \Cref{ex: same outcome}.

\begin{example}\label{ex: same outcome}
Consider $\bsy{u}=(1,2,3,5), k=2$ and consider two lucky sets $I=\{1,3\}$ and $I'=\{1,4\}$. A possible outcome for both lucky sets is $\out_{\bsy{u}}(I)=\out_{\bsy{u}}(I')=\{1\}\{2\}\{4\}\X\{3\}$ with $\bsy{a}=(1,1,5,1)$ and $\bsy{a'}=(1,1,4,3)$, respectively. 
\end{example}

Following notation from \cite{diazlopez2017descentpolynomials}, let $\delta = (\delta_1, \ldots, \delta_k)$ be a weak composition of $n$. We denote the multinomial coefficient \[\binom{n }{ \delta} \coloneqq \frac{n!}{\delta_1!\, \delta_2 ! \,\cdots\, \delta_k!}.\]

The following result can be viewed as a formula for enumerating outcomes of parking completions with $k$ lucky cars.

\begin{theorem}\label{thm:outcomes with m unavailable spots}
Let $\bsy{u}=(u_1,u_2,\ldots, u_n)$ such that all $u_i$ are distinct and in increasing order. Suppose spots $s_1,s_2,\ldots, s_m$ are unavailable. Let $\delta_1 = s_1-1$, $\delta_2 = s_2-s_1-1$, $\ldots$, $\delta_m = s_m-s_{m-1}-1$, $\delta_{m+1} = n+m - s_m$. The number of distinct outcomes with $k$ lucky cars is
\[
\left|\bigcup_{|I| = k}\out_{\bsy{u}}(I)\right|=\binom{n}{\delta}\sum_{\stackanchor{$(i_1,\ldots, i_{m+1})$ nonnegative,}{$\sum_{p}i_p \leq  \max\{k-\delta_1, k-1\}$}} \left( \prod_{j=1}^{m+1} \genfrac{\langle}{\rangle}{0pt}{}{\delta_j}{i_j}
\right),
\]
where 
$\genfrac{\langle}{\rangle}{0pt}{}{\delta_j}{i_j}$
is the Eulerian number with initial conditions $\genfrac{\langle}{\rangle}{0pt}{}{\delta_j}{0}=1$
for any $\delta_j\geq 0$.
\end{theorem}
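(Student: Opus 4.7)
The plan is to apply Theorem~\ref{thm:upfs characterization} to each outcome in $\O_{\bsy{u}}$ and count those whose \emph{forced lucky set} has size at most $k$. Since the entries of $\bsy{u}$ are distinct, every available spot has capacity one, so an outcome $B\in\O_{\bsy{u}}$ is simply a permutation of $[n]$ placed in the $n$ available spots, with an $\X$ pinned at each position $s_1,\ldots,s_m$; these $\X$'s partition the available spots into $m+1$ contiguous runs of sizes $\delta_1,\delta_2,\ldots,\delta_{m+1}$ (any of which may be zero). Conversely, every such arrangement arises as an outcome, since having each car prefer the spot where it is placed yields a valid $\bsy{u}$-parking function with that outcome.

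For such an outcome $B$, define its \emph{forced lucky set} $F(B)$ to be the set of cars that any compatible lucky set $I$ must contain by Theorem~\ref{thm:upfs characterization}: the single car in $B_1$ (when $\delta_1\ge 1$), together with every ``descent car'' $b_2$ arising from a pair $b_1\in B_{i-1}\neq\X$, $b_2\in B_i\neq\X$ with $b_1>b_2$. Theorem~\ref{thm:upfs characterization} then gives $B\in\O_{\bsy{u}}(I)$ iff $F(B)\subseteq I$, since its backward direction constructs a $\bsy{u}$-parking function with outcome $B$ and lucky set exactly $I$ for every such $I$. Hence $B$ lies in $\bigcup_{|I|=k}\O_{\bsy{u}}(I)$ iff $|F(B)|\le k$. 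Writing $i_j$ for the number of descents in the $j$-th run of $B$, we have $|F(B)| = \sum_{j=1}^{m+1} i_j$ when $\delta_1=0$ and $|F(B)| = 1 + \sum_{j=1}^{m+1} i_j$ when $\delta_1\ge 1$, so the inequality $|F(B)|\le k$ is captured uniformly by $\sum_{j=1}^{m+1} i_j \le \max\{k-\delta_1,\,k-1\}$.

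It then remains to count outcomes realizing a prescribed descent profile $(i_1,\ldots,i_{m+1})$. First partition the $n$ cars into ordered blocks of sizes $\delta_1,\ldots,\delta_{m+1}$ in $\binom{n}{\delta}$ ways, and then independently arrange each block as a permutation of its $\delta_j$ cars with exactly $i_j$ descents, which by labeling invariance of the descent statistic can be done in $\genfrac{\langle}{\rangle}{0pt}{}{\delta_j}{i_j}$ ways (with the convention $\genfrac{\langle}{\rangle}{0pt}{}{0}{0}=1$ for empty runs). Summing over all admissible tuples produces the stated formula. The main subtlety, and the step requiring the most care, is justifying that the car opening runs $2,\ldots,m+1$ need \emph{not} be lucky: condition~(2) of Theorem~\ref{thm:upfs characterization} only triggers when the preceding spot is occupied, and the construction in that proof legitimately assigns such a post-$\X$ car the preference of the unavailable spot to its left, which then pushes it forward into its correct final position without disturbing the outcome.
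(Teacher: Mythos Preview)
Your proposal is correct and follows essentially the same approach as the paper: both arguments partition the available spots into $m+1$ contiguous runs of lengths $\delta_1,\ldots,\delta_{m+1}$, distribute the $n$ cars among the runs via the multinomial $\binom{n}{\delta}$, and then count internal arrangements by Eulerian numbers subject to a global bound on the total number of descents. Your framing via the ``forced lucky set'' $F(B)$ and explicit appeal to Theorem~\ref{thm:upfs characterization} makes the reduction to counting descents, and the case split governing $\max\{k-\delta_1,k-1\}$, somewhat more transparent than the paper's terser treatment, but the substance is the same.
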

\begin{proof}

Suppose that spots $s_1, s_2, \ldots, s_m$ are forbidden, and there are $n$ cars to park in the $n$ available spots. We construct a (weak) composition $\delta$ of $n$ as follows: Let $\delta_1 = s_1-1$, $\delta_2 = s_2-s_1-1$, $\ldots$, $\delta_m = s_m-s_{m-1}-1$, $\delta_{m+1} = n+m - s_m$. The nonzero parts of $\delta$ are the lengths of contiguous runs of available spots. This construction is illustrated in \Cref{fig:constructing composition}.

\begin{figure}[!h]
    \centering
    \begin{tikzpicture}
    \draw[step=1cm,gray,very thin] (0,0) rectangle (2,1);
    \draw[step=1cm,gray,very thin] (3,0) rectangle (7,1);
    \draw[step=1cm,gray,very thin] (8,0) rectangle (12,1);
    \draw[step=1cm,gray,very thin] (13,0) rectangle (14,1);
    \foreach \i in {1,2,3,4,5,6,7,8,9,10,11,12,13,14}
    {\draw (\i,0) -- (\i,1);
    }
    \foreach \i in {4,9,10}{
    \draw (\i,0)--(\i+1,1);
    \draw (\i+1,0)--(\i,1);
    }

    \node at (2.5,.5) {$\ldots$};
    \node at (7.5,.5) {$\ldots$};
    \node at (12.5,.5) {$\ldots$};
    \node at (2,-.6) {$\underbrace{\phantom{xxxxxxxxxxxxxxxxxxx}}_{ \delta_1}$};
    \node at (4.5,-.6) {$s_1$};
    \node at (7,-.6) {$\underbrace{\phantom{xxxxxxxxxxxxxxxxxxx}}_{ \delta_2}$};
    \node at (9.5,-.6) {$s_2$};
    \node at (10.5,-.6) {$s_3$};
    \node at (12.5,-.6) {$\underbrace{\phantom{xxxxxxxxxxxxxx}}_{\delta_4} $};
    \end{tikzpicture}
    \caption{An example where there are $3$ unavailable spots: $s_1, s_2$, and $s_3$. The lengths of the contiguous blocks of available spots determine the parts of the weak composition $\delta$. In this example, since $s_2$ and $s_3$ are consecutive unavailable spots, we have $s_3-s_2-1=\delta_3=0 $.}
    \label{fig:constructing composition}
\end{figure}
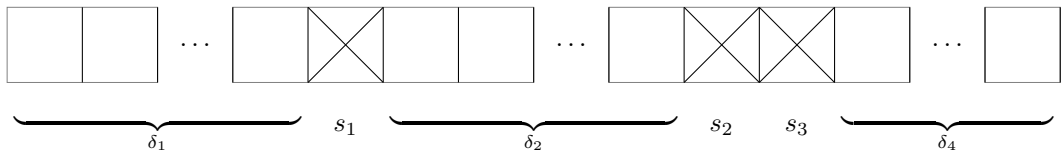

We then distribute the $n$ cars into subsets of sizes $\delta_1, \delta_2, \ldots, \delta_{m+1}$. This determines the block of contiguous available spots in which each car will park. 
There are $\binom{n }{\delta} = \frac{n!}{\delta_1! \cdots \delta_{m+1}!}$ ways to do this.

Once this distribution is chosen, we then may have up to $k-1$ total descents among all blocks (if the first spot is available; i.e.\ $s_1 \neq 1$), or $k$ descents if the first spot is unavailable. Supposing that the $j$th subsequence has $i_j$ descents, we have that $i_1 + \cdots + i_{m+1} \leq \max\{k-\delta_1, k-1\}$.
For each choice of $(i_1,\ldots, i_{m+1})$ we then have $\prod_{j=1}^{m+1} \genfrac{\langle}{\rangle}{0pt}{}{\delta_j}{i_j}$ choices over all subsequences.

In total we obtain
\[
\binom{n }{\delta}\sum_{\stackanchor{$(i_1,\ldots, i_{m+1})$ nonnegative,}{$\sum_{r}i_r \leq  \max\{k-\delta_1, k-1\}$}} \left( \prod_{j=1}^{m+1} \genfrac{\langle}{\rangle}{0pt}{}{\delta_j}{i_j} \right)
\]
possible outcomes when there are $k$ lucky cars and spots $s_1,s_2,\ldots,s_m$ are forbidden.
\end{proof}

\emph{Remark.} For the classical case in which $\bsy{u} = (1,2,\ldots, n)$, we have the composition $\delta = (n)$, and the formula in \Cref{thm:outcomes with m unavailable spots} specializes to \Cref{cor:again}:
\[
\binom{n }{n}\sum_{i \leq k-1} \left( \prod_{j=1}^{1} \genfrac{\langle}{\rangle}{0pt}{}{n}{i} \right) = \sum_{i=1}^{k-1} \genfrac{\langle}{\rangle}{0pt}{}{n}{i}.
\]
So again we have the $(k-1)$th summation of the Euler triangle, counting the number of permutations of $[n]$ with at most $k-1$ descents.

\subsection{Counting outcomes with one unavailable spot}\label{sec:one unavailable spot}

In this subsection, we are interested in enumerating outcomes based on which cars are lucky, regardless of which spots they end up choosing, when there is exactly one unavailable spot.

\subsubsection{Counting outcomes when the only unavailable spot is spot $1$}\label{subsec: spot 1 is unavailable}
In this section, we give a formula for the number of outcomes of $\bsy{u}$-parking functions for $\bsy{u} = (2,3,\ldots, n , n+1)$ with lucky set $I = \{i_1, i_2 \ldots, i_k\}$ where $k\leq n$. In this scenario, there are $n+1$ spots on the street with spot $1$ unavailable and the remaining $n$ spots are available, and for any outcome $B\in \out_{\bsy{u}}(I)$, $B_1=\X$. In this case we can imagine that a lucky car $0$ has first come to occupy the unavailable spot $1$.

\begin{theorem}\label{thm:count for outcomes with forbidden spot one}
Let $\bsy{u} = (2,3,\ldots,n,  n+1)$. If $I = \{i_1, i_2, \ldots, i_{k}\}$ is a lucky set of $\PF_n(\bsy{u})$ with $1\leq i_1 <\cdots < i_{k} \leq n$, then
\begin{align}\label{eq:rhs}
|\out_{\bsy u}(I)|=k! \prod_{j\in [n]\setminus I} (|I \cap [j]|+1) =2^{i_2-i_1}3^{i_3-i_2}\cdots k^{i_{k}-i_{k-1}} (k+1)^{n-i_k}.
\end{align}
\end{theorem}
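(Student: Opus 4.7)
The plan is to reduce the count to a permutation-counting argument in the style of \Cref{thm:count for outcomes}, after using a ``virtual car'' trick to account for the unavailable first spot. Since $\bsy u=(2,3,\ldots,n+1)$ has every nonzero capacity equal to $1$ and $m_1(\bsy u)=0$, every element of $\out_{\bsy u}$ has the form $B=\X\{\pi_1\}\{\pi_2\}\cdots\{\pi_n\}$ for a unique $\pi=\pi_1\pi_2\cdots\pi_n\in\Sym_n$, so counting $|\out_{\bsy u}(I)|$ reduces to counting permutations $\pi$. From \Cref{thm:upfs characterization}, condition (1) is vacuous (since $B_1=\X$), and condition (2) becomes: for every $i\geq 2$, if $\pi_{i-1}>\pi_i$ then $\pi_i\in I$. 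The essential difference from the setting of \Cref{thm:count for outcomes} is that $\pi_1$ carries no constraint and, in particular, need not lie in $I$.

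To handle this, I would prepend a virtual entry $0$ to form the extended sequence $0\pi_1\pi_2\cdots\pi_n$ on $\{0\}\cup[n]$ and work with the extended lucky set $I^*=\{0\}\cup I$ of size $k+1$. Because $0<\pi_1$ trivially, the extended sequence satisfies the analogue of the conditions of \Cref{thm:characterizing outcomes} relative to $I^*$: its first entry lies in $I^*$, and every descent in $0\pi_1\pi_2\cdots\pi_n$ has its smaller entry in $I^*$. The subsequence decomposition from the proof of \Cref{thm:count for outcomes} then applies verbatim: the extended sequence splits into $k+1$ increasing blocks, one headed by each element of $I^*$, with the non-lucky elements of $[n]\setminus I$ distributed among the blocks and each block listed in increasing order.

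For the enumeration, each non-lucky $j\in[n]\setminus I$ must be placed in a block whose head is strictly less than $j$. Since $0<j$ always, the $0$-block is always available, and the $|I\cap[j]|$ elements of $I$ below $j$ each supply a valid block, giving $|I\cap[j]|+1$ choices. The $0$-block is pinned to the leftmost position (it corresponds to the forbidden spot), while the remaining $k$ blocks headed by elements of $I$ may be arranged in any of $k!$ orders. Deleting the prepended $0$ after the arrangement establishes a bijection between valid permutations $\pi$ and (placement, ordering) pairs, yielding the middle expression $k!\prod_{j\in[n]\setminus I}(|I\cap[j]|+1)$.

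Finally, I would obtain the closed form by partitioning $[n]\setminus I$ according to which open interval $(i_{r-1},i_r)$ contains $j$, with the conventions $i_0\coloneq 0$ and $i_{k+1}\coloneq n+1$. On such an interval, $|I\cap[j]|+1$ equals $r$, contributing $r^{i_r-i_{r-1}-1}$ to the product, while the interval above $i_k$ contributes $(k+1)^{n-i_k}$. Multiplying by $k!=1\cdot 2\cdots k$ absorbs each $-1$ in the exponents and produces $2^{i_2-i_1}3^{i_3-i_2}\cdots k^{i_k-i_{k-1}}(k+1)^{n-i_k}$, as claimed. The most delicate step is verifying that the virtual-car construction indeed defines a bijection (in particular, that the decomposition is well-defined when $1\notin I$, which is where this proof must differ from that of \Cref{thm:count for outcomes}); the remaining computation is routine.
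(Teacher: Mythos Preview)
Your proposal is correct and follows essentially the same approach as the paper: both arguments introduce an imaginary car~$0$ occupying the forbidden first spot, decompose the resulting word $0\pi_1\cdots\pi_n$ into $k+1$ increasing blocks headed by the elements of $\{0\}\cup I$, count $|I\cap[j]|+1$ placements for each unlucky $j$, fix the $0$-block first while permuting the other $k$ blocks, and then simplify the product to the closed form exactly as you describe. Your write-up is a bit more explicit in invoking \Cref{thm:upfs characterization} to justify that the only surviving constraint on $\pi$ is the descent condition, but otherwise the proofs coincide.
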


\begin{proof}
To account for the assumption that spot 1 is the only unavailable spot, we assume that an imaginary car 0 parks in spot 1, and proceed similarly as in the classical case. Each unlucky car $j$ can park after any lucky car $i$ where $i<j$, $i=0$ included. In this case, the number of options for each unlucky car $j$ is the number of entries in $I$ which are less than $j$, plus one to account for spot 0.

Then the $j$th car has $|I \cap [j]| + 1$ options for which increasing subsequence to park in. The sequence starting with 0 is fixed to appear first, and the remaining $k$ other subsequences may appear in any order. It follows that the number of outcomes in this case is 
\[
|\out_n(I)|=k! \prod_{j\in [n]\setminus I}( |I\cap [j]|+1).
\]

Next we establish an alternative count similar to the formula appearing in \Cref{thm:count for outcomes}, which will give the right-hand-side of \eqref{eq:rhs}.

The number of unlucky cars which have only one option is the number of unlucky cars with index less than $i_1$. In this case, these cars must park in the sequence following car 0. There are $i_1-1$ such cars.
The number of unlucky cars which have exactly two options is the number of unlucky cars with index between $i_1$ and $i_2$. There are $i_2-i_1-1$ such cars. Similarly, for $\ell = 3,\ldots, k$, the number of unlucky cars which have exactly $\ell$ options is the number of unlucky cars with index between $i_{\ell-1}$ and $i_\ell$. There are $i_\ell-i_{\ell-1}-1$ such cars.
The number of unlucky cars which have $k+1$ options is the number of cars with index greater than {that of} every car indexed by $I$. There are $n-i_k$ such cars.

Then, to place the unlucky cars into the $k+1$ subsequences, the total number of options is
\[
2^{i_2-i_1-1}3^{i_3-i_2-1}\cdots k^{i_{k}-i_{k-1}-1} (k+1)^{n-i_k}.
\]
We may then arrange the subsequences in any order, as long as the subsequence beginning with car 0 is always first. There are $k!$ such arrangements.
In total the number of outcomes is
\begin{align*}
    k!\,2^{i_2-i_1-1}3^{i_3-i_2-1}\cdots k^{i_{k}-i_{k-1}-1} (k+1)^{n-i_k} = 2^{i_2-i_1}3^{i_3-i_2}\cdots k^{i_{k}-i_{k-1}} (k+1)^{n-i_k},
\end{align*}
and this completes the proof.
\end{proof}

\begin{remark}
\Cref{thm:count for outcomes with forbidden spot one} could be viewed as a corollary of \Cref{thm:count for outcomes} where we let $I' =\{ 1, i_1 +1, i_2 +1,\ldots, i_k+1\}$ be the lucky set. If there are $k+1$ lucky cars, the probability that car $1$ parks in spot $1$ is $\frac{1}{k+1}$, since {the way that outcomes are constructed gives equal likelihood for any of the lucky cars to appear first}.
We can then divide the formula in \Cref{thm:count for outcomes} by $k+1$ to obtain the same result.
\end{remark}

\begin{corollary}\label{cor:first k forbidden spot one}
If $\bsy{u} = (2,3,\ldots,n,  n+1)$ and $I = \{1,2,\ldots,k\}$. Then
\[
|\out_{\bsy u}(I)|=k! (k+1)^{n-k}.
\]
\end{corollary}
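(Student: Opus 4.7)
The plan is to obtain this corollary as a direct specialization of \Cref{thm:count for outcomes with forbidden spot one} to the lucky set $I=\{1,2,\ldots,k\}$, where the indices $i_1,\ldots,i_k$ are consecutive. Since the theorem has already been established, no new combinatorial argument is needed; this really is an algebraic simplification.

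I would proceed in two short steps. First, I would apply the product formula $|\out_{\bsy u}(I)|=k!\prod_{j\in[n]\setminus I}(|I\cap[j]|+1)$. With $I=\{1,\ldots,k\}$, the complement is $[n]\setminus I=\{k+1,\ldots,n\}$, and for every $j$ in this range we have $|I\cap[j]|=k$. Hence each of the $n-k$ factors in the product equals $k+1$, yielding $k!(k+1)^{n-k}$ immediately. Second, as a cross-check, I would verify consistency with the exponent form of the same theorem: setting $i_\ell=\ell$ gives $i_{\ell}-i_{\ell-1}=1$ for $\ell=2,\ldots,k$ and $n-i_k=n-k$, so
\[
2^{i_2-i_1}3^{i_3-i_2}\cdots k^{i_k-i_{k-1}}(k+1)^{n-i_k}=2\cdot 3\cdots k\cdot (k+1)^{n-k}=k!\,(k+1)^{n-k},
\]
matching the first computation.

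There is no real obstacle here; the only thing to be careful about is the degenerate case $k=1$, where $I=\{1\}$, there is no index $i_2$, the exponent product is empty, and the formula should reduce to $1!\cdot 2^{n-1}=2^{n-1}$. Under the product formulation this is $1!\prod_{j=2}^n(|I\cap[j]|+1)=\prod_{j=2}^n 2=2^{n-1}$, which is consistent, so the proof handles all $1\leq k\leq n$ uniformly. I would present the proof as a single short paragraph invoking \Cref{thm:count for outcomes with forbidden spot one} and performing the simplification above.
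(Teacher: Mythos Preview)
Your proposal is correct and follows essentially the same approach as the paper: both specialize \Cref{thm:count for outcomes with forbidden spot one} to $i_\ell=\ell$ and simplify, with the paper using only the exponent form while you additionally verify via the product form and the $k=1$ edge case.
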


\begin{proof}
We write $I = \{i_1,i_2,\ldots, i_k\}$ with $i_1=1,\,i_2=2,\,\ldots,\, i_k=k$. Then by \Cref{thm:count for outcomes with forbidden spot one}, \[|\out_{\bsy u}(I)| =2^{i_2-i_1}3^{i_3-i_2}\cdots k^{i_{k}-i_{k-1}} (k+1)^{n-i_k} = 2^{1}3^{1}\cdots k^{1} (k+1)^{n-k} = k! (k+1)^{n-k}.\qedhere\]

\end{proof}

\subsubsection{Counting outcomes when the only unavailable spot is spot $2$}\label{subsec: spot 2 is unavailable}
In this section, we give a formula for the number of outcomes of $\bsy{u}$-parking functions for $\bsy{u}=(1,3,4,\ldots, n,n+1)$ with lucky set $I=\{i_1,i_2,\ldots,i_k\}$ where $k\leq n$. In this scenario, there are $n+1$ spots on the street with spot $2$ unavailable and the remaining $n$ spots are available, and for any outcome $B\in \out_{\bsy{u}}(I)$, $B_2=\X$.

\begin{definition}
For $j_1 < \dots < j_\ell \in [n]$, we define the set $I_{(j_1,\dots, j_\ell)}$ to be the set constructed from $I \backslash \{j_1, \dots, j_\ell\}$, in which each element $i \in I \backslash \{j_1, \dots, j_\ell\}$ is decreased by $|\{j_1, \dots, j_\ell\} \cap [i]|$, i.e.
\[ I_{(j_1,\dots, j_\ell)} \colonequals \{i -  |\{j_1, \dots, j_\ell\} \cap [i]| : i \in I \backslash \{j_1, \dots, j_\ell\}\}.\]

\end{definition}

\begin{example}
Let $I = \{1,4,5\}$.  Then,
\begin{itemize}
    \item $I_{(1,3)}= \{2, 3\}$ since $I \setminus \{1,3\} = \{4,5\}$, and both entries in $(1,3)$ are less than $4,5$, so the entries $4$ and $5$ are decreased by $2$ each.
    \item $I_{(4)} = \{1, 4\}$ since $I \setminus \{4\} = \{1,5\}$, and $4$ is less than $5$, so $5$ is decreased by $1$.
    \item $I_{(2,3)} = \{1,2,3\}$ since $I \setminus \{2,3\} = \{1,4,5\}$, and $1$, $2$, and $3$ are less than $4$ and $5$, so $4$ and $5$ are decreased by $3$. 
\end{itemize}

\end{example}

\begin{definition}\label{def:notation for number of outcomes}
For a given lucky set of cars $I = \{i_1, i_2 \ldots, i_k\}$ {with $1\leq i_1 <\cdots < i_{k} \leq n$}, we define $c_s(n,I)$ to be the number of outcomes for $\bsy{u} = (1,2,3,\ldots, s-1, s+1, \ldots, n+1)$ where spot $s$ is unavailable and there are $n$ cars.
\end{definition}

\begin{example}
Using the result in \Cref{thm:count for outcomes with forbidden spot one}, we can write $c_1(n,I)$ as follows
\[ c_1(n,I) = 2^{i_2 - i_1}3^{i_3 - i_2} \cdots (k+1)^{n - i_k}.\]
\end{example}

\begin{theorem}
\label{thm:count for outcomes with forbidden spot two}
Suppose $\bsy{u}=(1,3,4,\ldots, n,n+1)$. If $I = \{i_1, \dots, i_k\}$ {with $1\leq i_1 <\cdots < i_{k} \leq n$} and $c_2(n,I)$ denotes the number of outcomes of $\bsy u$-parking functions with lucky set $I$, then 
\[ c_2(n, I) = \sum_{j = 1}^k c_1(n-1, I_{(i_j)}).\]
\end{theorem}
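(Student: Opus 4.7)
The plan is to partition $\out_{\bsy u}(I)$ according to which lucky car parks in spot~$1$. Since $m_1(\bsy u)=1$ and $m_2(\bsy u)=0$, every outcome has the shape $B=B_1\,\X\, B_3B_4\cdots B_{n+1}$ with $|B_1|=1$, and condition~(1) of \Cref{thm:upfs characterization} forces $B_1=\{i_j\}$ for some $j\in[k]$. This yields the disjoint decomposition
\[\out_{\bsy u}(I)=\bigsqcup_{j=1}^{k}\bigl\{B\in\out_{\bsy u}(I):B_1=\{i_j\}\bigr\},\]
so it suffices to show that for each fixed $j$ the $j$-th part has cardinality $c_1(n-1,I_{(i_j)})$.

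To that end, I would construct a bijection between $\{B\in\out_{\bsy u}(I):B_1=\{i_j\}\}$ and the set of outcomes counted by $c_1(n-1,I_{(i_j)})$, namely the outcomes of $\bsy u'$-parking functions on $n-1$ cars with $\bsy u'=(2,3,\ldots,n)$ having lucky set $I_{(i_j)}$. Given $B=\{i_j\}\,\X\,B_3\cdots B_{n+1}$, the map deletes the block $B_1$, shifts block indices down by one so that $B_{\ell+1}$ becomes block $\ell$, and relabels each remaining car $c$ by $c-|\{i_j\}\cap[c]|$. The result is a sequence $\X\,B_2'B_3'\cdots B_n'$ on the relabeled car set $[n-1]$ whose first block is $\X$, as required by the $\bsy u'$-parking set-up. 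The inverse map (insert $\{i_j\}$ at the front, shift block indices up by one, and increase each car label that is at least $i_j$ by one) is plainly well-defined, making the map a bijection between the two sets of sequences.

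The key verification is that $B\in\out_{\bsy u}(I)$ with $B_1=\{i_j\}$ if and only if its image lies in $\out_{\bsy u'}(I_{(i_j)})$. By \Cref{thm:upfs characterization} this reduces to checking condition~(2). Crucially, because $B_2=\X$, condition~(2) in the original imposes no constraint involving $B_1$: the block $B_1$ and the next non-$\X$ block $B_3$ are not consecutive in the sense of condition~(2), since position~$2$ between them is $\X$. Hence the only active constraints on $B$ involve pairs of consecutive non-$\X$ blocks drawn from $B_3,B_4,\ldots,B_{n+1}$, and these correspond exactly -- after the index shift -- to the constraints on consecutive non-$\X$ blocks among $B_2',B_3',\ldots,B_n'$ in the image. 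For any car $c\neq i_j$ appearing in the image, the defining formula for $I_{(i_j)}$ gives $c\in I$ if and only if $c-|\{i_j\}\cap[c]|\in I_{(i_j)}$, so the ``$b_2\in I$'' clause of condition~(2) transfers faithfully between the two pictures.

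Summing $|\out_{\bsy u'}(I_{(i_j)})|=c_1(n-1,I_{(i_j)})$ over $j\in[k]$ then produces the claimed identity. The main obstacle I anticipate is this bookkeeping step: making it airtight that relabeling cars and shifting block indices commute with the lucky-set characterization, and in particular confirming that the $\X$ at position~$2$ correctly decouples $B_1$ from $B_3$ so that no spurious descent constraint between them appears in the original picture. Once that decoupling is established, the rest of the argument is a clean partition-and-bijection computation.
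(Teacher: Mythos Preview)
Your proposal is correct and follows essentially the same approach as the paper: both partition the outcomes by the lucky car $i_j$ parked in spot~$1$ and then identify the remaining street (spot~$2$ unavailable, spots $3$ through $n+1$) with the $c_1(n-1,\cdot)$ scenario after the order-preserving relabeling $[n]\setminus\{i_j\}\to[n-1]$. The paper phrases this reduction informally as ``a copy of the parking situation described in \Cref{thm:count for outcomes with forbidden spot one}'', whereas you make the bijection explicit and verify it against the characterization of \Cref{thm:upfs characterization}; in particular, your observation that the $\X$ at position~$2$ decouples $B_1$ from $B_3$ in condition~(2) is exactly the point that makes the reduction valid, and it goes through as you describe.
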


\begin{proof}
    Since spot $1$ is available, we have that the car parking in that spot is always lucky. Suppose that car $i_j$ parks in spot $1$ where $1\leq j \leq k$. 
    
    The rest of the street consists of the unavailable spot 2, followed by spots $3$ through $n+1$, where the cars parking are indexed by the set $[n+1]\setminus \{i_j\}$ with lucky set $I \setminus\{i_j\}$.
    We may view this as a copy of the parking situation described in \Cref{thm:count for outcomes with forbidden spot one}, in which
     we have $n$ spots and the first is unavailable, where the cars in $[n]\setminus \{i_j\}$ are viewed in correspondence with the set $[n-1]$. The corresponding lucky set in this setting can be formed from $I\setminus \{i_j\}$
     as follows: for each car whose index is higher than $i_j$, we decrease its index by $1$. That is, the lucky set is $I_{(i_j)}$. The number of ways for the cars to park in spots $3,\ldots, n+1$ is then given by $c_1 (n-1,I_{(i_j)})$.

     The lucky car $i_j$ can be any car from the set $I = \{i_1,\ldots, i_k\}$. Then in total, the number of possible outcomes is
     \[
     c_2(n, I)=\sum_{j=1}^k c_1 (n-1,I_{(i_j)}),
     \]
     as claimed.
\end{proof}

\subsubsection{Counting outcomes when the only unavailable spot is spot $3$}\label{subsec: spot 3 is unavailable}

We now give a formula for the number of outcomes of $\bsy{u}$-parking functions for $\bsy{u} = (1,2,4, \dots, n+1)$ with lucky set $I = \{i_1, i_2 \ldots, i_k\}$ where $k\leq n$. In this scenario, there are $n+1$ spots on the street with spot $3$ unavailable and the remaining $n$ spots are available, and for any outcome $B\in \out_{\bsy{u}}(I)$, $B_3=\X$. We follow the notation from \Cref{def:notation for number of outcomes}, and let $c_3(n,I)$ denote the number of possible outcomes in this case.

 \begin{theorem}
Suppose $\bsy{u}=(1,2,4,\ldots, n,n+1)$.
    If $I = \{i_1, \dots, i_k\}$ {with $1\leq i_1 <\cdots < i_{k} \leq n$} and $c_3(n,I)$ denotes the number of outcomes of $\bsy u$-parking functions with lucky set $I$, then 
\[ c_3(n,I) = \sum_{j = 1}^k c_2(n-1, I_{(i_j)}) + \sum_{j = 1}^k \left(\sum_{s \in [n] \backslash I, s > i_j} c_1(n-2, I_{(i_j,s)})\right),\]
where $c_2(n,I)$ is as in \Cref{def:notation for number of outcomes}.
\end{theorem}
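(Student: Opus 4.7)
The plan is to prove the formula by conditioning on the identities of the cars occupying spots $1$ and $2$, in direct analogy with the proof of \Cref{thm:count for outcomes with forbidden spot two} for $c_2$. By \Cref{thm:upfs characterization}(1), since spot $1$ is available, any outcome $B=B_1B_2\,\X\,B_4\cdots B_{n+1}$ with lucky set $I$ must have $B_1=\{i_j\}$ for some $i_j\in I$. Fixing such a choice of $i_j$, I would split the count according to whether the unique car occupying $B_2$ is lucky (lies in $I$) or unlucky (lies in $[n]\setminus I$). These two cases are mutually exclusive and exhaustive, so summing them over all $i_j\in I$ yields $c_3(n,I)$.

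In the first case, $B_2=\{i\}$ with $i\in I\setminus\{i_j\}$. After deleting $B_1$ and relabeling the surviving cars $[n]\setminus\{i_j\}$ as $[n-1]$ via the shift defining $I_{(i_j)}$, the truncated word $B_2\,\X\,B_4\cdots B_{n+1}$ becomes an outcome of an $(n-1)$-car $\bsy u'$-parking function with $\bsy u'=(1,3,4,\ldots,n)$, i.e.\ with spot $2$ unavailable, whose lucky set is $I_{(i_j)}$. Conversely, any such sub-outcome glues back uniquely by prepending $\{i_j\}$ to $B_1$; the only condition of \Cref{thm:upfs characterization} that might be newly violated at the seam is the descent condition between $B_1$ and $B_2$, but since the car in $B_2$ lies in $I$, the condition holds automatically. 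Hence this case contributes exactly $c_2(n-1,I_{(i_j)})$.

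In the second case, $B_2=\{s\}$ with $s\in[n]\setminus I$. Condition $(2)$ of \Cref{thm:upfs characterization} forces $i_j<s$, since $i_j>s$ would require $s\in I$, contradicting the assumption. Removing both $B_1$ and $B_2$ and relabeling the surviving cars $[n]\setminus\{i_j,s\}$ as $[n-2]$ via the shift defining $I_{(i_j,s)}$, the tail $B_4\cdots B_{n+1}$ becomes an outcome of an $(n-2)$-car $\bsy u''$-parking function with $\bsy u''=(2,3,\ldots,n-1)$, i.e.\ with spot $1$ unavailable, whose lucky set is $I_{(i_j,s)}$. All conditions of \Cref{thm:upfs characterization} transfer cleanly, again because the only new boundary interactions are either automatic or enforced by $s>i_j$. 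This case therefore contributes $c_1(n-2,I_{(i_j,s)})$, and summing over admissible pairs $(i_j,s)$ produces the second sum in the claimed identity.

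The main obstacle is to verify cleanly that the relabelings are genuine bijections between outcomes satisfying \Cref{thm:upfs characterization} on the original street and on the truncated streets. Concretely, one must check that (a) the descent conditions within the truncated tail are identical under the shift $I\mapsto I_{(i_j)}$ or $I\mapsto I_{(i_j,s)}$, and (b) the only new seam conditions introduced by gluing are those that are automatic in Case A (because $B_2\subseteq I$) and enforced in Case B (because $s>i_j$). Once this is in place, the proof reduces to the bookkeeping of summing over $i_j\in I$ in the first case and over $(i_j,s)$ with $i_j\in I$ and $s\in[n]\setminus I$, $s>i_j$ in the second.
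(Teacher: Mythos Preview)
Your proposal is correct and follows essentially the same approach as the paper's proof: condition on the lucky car $i_j$ in spot~$1$, then split according to whether the car in spot~$2$ is lucky (yielding the $c_2(n-1,I_{(i_j)})$ term) or unlucky with index $s>i_j$ (yielding the $c_1(n-2,I_{(i_j,s)})$ term). Your treatment is slightly more explicit in checking that the relabelings give bijections via the conditions of \Cref{thm:upfs characterization}, whereas the paper asserts the correspondences more informally, but the decomposition and logic are identical.
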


\begin{proof}
Similarly as before, since spot $1$ is available, the car parking in that spot is always lucky. Suppose this car is car $i_j$. The car that parks in spot 2 may or may not be lucky. If the car in spot 2 is lucky, then for the last $n$ spots $\{2,3,\ldots, n+1\}$ where spot $3$ is unavailable, we have a copy of the parking scenario in \Cref{thm:count for outcomes with forbidden spot two} in which the corresponding lucky set is $I_{(i_j)}$ where we shift the vector $\bsy{u'}=(2,4,\ldots,n,n+1)$ to $\bsy{u}=(1,3,4,\ldots,n-1,n)$. In this case, there are $n-1$ cars and $n-1$ available spots with spot $2$ unavailable. The possible outcomes in this case are then counted by $\sum_{j = 1}^k c_2(n-1, I_{(i_j)})$.

Otherwise suppose the car that parks in spot 2 is unlucky. In this case, the car in spot 2 must be from the set $[n]\setminus I$, and its preference must have been spot 1. In order for it to have parked in spot 2, it must have parked after car $i_j$, so its index must be higher than $i_j$. Let $s$ be the index of the car in spot 2. Then we have $s\in [n]\setminus I$, and $s>i_j$.

Once car $s$ has been chosen, we count the possible options for how the remaining cars occupy the remaining spots (the forbidden spot 3, and then spots 4 through $n+1$). Similarly as in the proof of \Cref{thm:count for outcomes with forbidden spot two}, we can view this as a parking scenario for a street with $n-1$ spots in which the first spot is forbidden, and the cars indexed by $[n-2]$ are viewed in correspondence with $[n] \setminus \{i_j, s\}$ with the same relative order. In this case, the lucky set is $I_{(i_j,s)}$. Then for a fixed choice of $i_j$ and $s$, we have $c_1(n-2,I_{(i_j,s)})$ options for the last $n-1$ spots. Over all possible choice of $i_j$ and $s$, we then have $\sum_{j=1}^k \sum_{s \in [n] \setminus I, s>i_j}c_1(n-2,I_{(i_j,s)})$ options.

Then the total number of possible outcomes is given by
\[ c_3(n,I)=\sum_{j = 1}^k c_2(n-1, I_{(i_j)}) + \sum_{j = 1}^k \sum_{s \in [n] \backslash I, s > i_j} c_1(n-2, I_{(i_j,s)}). \qedhere\]
\end{proof}

\begin{remark}
If we keep moving down the street, only removing one spot at a time, we should obtain a recurrence relation that depends on the previous cases. That is, if the only unavailable spot is $s$ then the recurrence should depend on $c_1(x_1,I_{y_1}), c_2(x_2,I_{y_2}), \ldots, c_{s-1}(x_{s-1},I_{y_{s-1}})$ where $x_1, x_2,\ldots,x_{s-1}$ are some lengths (depending on how many spots are available) and $y_1,y_2,\ldots,y_{s-1}$ are the corresponding sets of cars depending on which cars we are removing from the lucky set $I_{y_i}$. We pose this as an open problem in \Cref{sec:future work}.
\end{remark}

\section{Counting outcomes of vector parking functions with a fixed set of lucky spots}\label{sec:outcomesbyspots}

In this section, given a vector $\bsy u$, we count outcomes corresponding to a given set $L$ of lucky spots, where a parking spot $\ell$ is a \emph{lucky spot} if it is the preference of a lucky car.
By construction, $L$ is a subset of the entries in $\bsy u$. 
Notice that if any car parks in its preferred spot $\ell$, that spot is said to be a lucky spot (even if other cars park in spot $\ell$ that did not prefer $\ell$).
Throughout we let $L$ be a set containing the indices of lucky spots; i.e.\ the spots in which lucky cars park.

\begin{definition}\label{def:delta}
Given a vector $\bsy u$ of length $n$, and $L$ a subset of the entries in $\bsy u$, we associate a composition of $n$, constructed by separating the entries in $\bsy u$ into sub-intervals as follows:
    \begin{enumerate}
    \item Place a bar to the left of the first entry in $\bsy u$. 
    \item For any $i \in \{1,2,\ldots, \max(\bsy u)\}$ such that $i \in \bsy u$ but $i+1 \notin \bsy u$, place a bar to the right of $i$ in $\bsy u$.
    \item For each spot $\ell\in L$ where {$\ell>1$}, place a bar to the left of the first instance of this value in $\bsy u$.
\end{enumerate}
Let $k$ denote the total number of contiguous blocks of unavailable spots plus the number of lucky spots. 
Once the bars have been placed,
for all $1\leq i\leq k$, let $\delta_i$ be the length of the contiguous set of entries between the $i$th and $(i+1)$th bar. 
Now define the \emph{associated composition} $\delta_{\bsy u, L} =  (\delta_1,\ldots, \delta_k)$ to be a weak composition of $n$ which is given via this construction. 

\end{definition}
We illustrate \Cref{def:delta} next.

\begin{example}
Let $n=6$, $\bsy u=(1,1,2,4,4,5)$ and $L=\{1,5\}$. We construct the associate composition by placing the following bars:
 \[ \mid 112 \mid 44  \mid 5 \mid.\]
 
\noindent The resulting associated composition is $\delta_{\bsy u, L}=(3,2,1)$.

If $\bsy u$ is the same and instead we consider $L'=\{1,4\}$, then we would place the following bars:
\[\mid 112 \mid \mid 44  5 \mid.\]
The resulting associated composition is $\delta_{\bsy u, L'}=(3,0,3)$.
\end{example}

\begin{definition}
Viewing the outcome of a $\bsy u$-parking function as a permutation of $[n]$ with $\X$'s inserted to represent unavailable spots between entries, we define the \emph{underlying permutation} to be the sequence of entries with $\X$'s removed. If multiple cars park in the same spot, we represent them as a sequence in increasing order.
\end{definition}

\begin{example}
If the outcome of a $\bsy{u}$-parking function is $\{1\}\X\{2,3,6\}\X\X\{4\}\{7,8\}\X\{5\}$, the underlying permutation (in one-line notation) is $12364785$.
\end{example}
\begin{theorem}
    Let $\bsy u$ be a vector of length $n$, and let $L$ be a subset of the entries in $\bsy u$ indexing the lucky spots. 
    Let $\delta_{\bsy u, L} = (\delta_1,\ldots, \delta_k)$ be the associated composition of $n$.
Then the number of outcomes for $\bsy u$-parking functions with lucky spots indexed by $L$ is  \[\binom{n }{\delta_{\bsy u, L}} = \binom{n }{\delta_1,\ldots,\delta_k} = \frac{n!}{\delta_1!\, \delta_2 ! \,\cdots\, \delta_k!}.\]
\end{theorem}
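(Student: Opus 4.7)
The plan is to establish a size-preserving bijection between the outcomes with lucky spots equal to $L$ and ordered partitions $(S_1, \ldots, S_k)$ of $[n]$ with $|S_i| = \delta_i$, from which the formula will follow by standard multinomial counting.

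First I would prove the forward direction. Let $B = B_1 \cdots B_M$ be an outcome arising from some $\bsy u$-parking function with lucky spot set $L$. Define $S_i$ to be the set of cars appearing in the spots making up the $i$-th block of the associated composition $\delta_{\bsy u, L}$. I claim that when the cars of $S_i$ are read left to right across these spots (with cars in a single spot taken in their convention-mandated increasing order), they form a strictly increasing sequence. Indeed, if $\ell-1$ and $\ell$ are consecutive available spots lying in the same block with $\ell$ not the first spot of that block, then by construction $\ell \notin L$, so no car in $B_\ell$ is lucky. Theorem~\ref{thm:upfs characterization}(2) then forces $\max B_{\ell-1} < \min B_\ell$, establishing the claim. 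In particular, $B$ is completely determined by $(S_1, \ldots, S_k)$, so the map sending $B \mapsto (S_1,\ldots, S_k)$ is injective.

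Next I would prove the reverse direction. Given any ordered partition $(S_1, \ldots, S_k)$ of $[n]$ with $|S_i| = \delta_i$, define $B$ by placing the elements of $S_i$ into the spots of block $i$ in increasing order, respecting capacities. To realize $B$ as the outcome of a $\bsy u$-parking function with lucky spot set exactly $L$, I would construct the preference vector $\bsy a = (a_1, \ldots, a_n)$ by setting $a_c = \ell$ whenever car $c$ parks at a spot $\ell \in L$ in $B$, and $a_c = \ell - 1$ otherwise. For $\ell \notin L$: if $\ell$ is the first spot of its block, then the block must begin right after a gap, so $\ell - 1$ is unavailable; otherwise $\ell - 1$ lies in the same block as $\ell$ and, by the increasing arrangement of $S_i$, is occupied by cars whose indices are all less than $c$ by the time $c$ arrives. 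In either case spot $\ell - 1$ is blocked when $c$ arrives, so $c$ is pushed to $\ell$ and parks unluckily. A straightforward induction on car index then shows that $\bsy a$ produces outcome $B$ and that the set of lucky cars is exactly the set of cars parking in spots of $L$, making the lucky spot set equal to $L$.

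The counting step is then immediate: the number of ordered partitions of $[n]$ into a sequence of blocks of prescribed sizes $(\delta_1, \ldots, \delta_k)$ is $\binom{n}{\delta_1, \ldots, \delta_k} = \frac{n!}{\delta_1! \cdots \delta_k!}$. I expect the main obstacle to be the verification in the reverse direction that the explicit preference vector $\bsy a$ yields exactly the intended outcome: one must check, for each car $c$ and each intermediate spot on its path, that the spot is either unavailable or already filled by cars of smaller index. This hinges on the block structure imposed by $\delta_{\bsy u, L}$, namely that within each block the cars of $S_i$ arrive in increasing order and fill spots from left to right, and that each lucky-spot boundary or gap between consecutive blocks supplies the necessary blocker directly preceding the first spot of the next block.
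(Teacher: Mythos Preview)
Your approach is correct and matches the paper's: both identify outcomes with lucky spot set $L$ as precisely those whose underlying permutation decomposes into increasing runs of the prescribed lengths $\delta_1,\ldots,\delta_k$, and then count via the multinomial coefficient (the paper by invoking MacMahon's classical formula, you by the direct bijection with ordered set partitions). Your explicit construction of a witness preference vector $\bsy a$ in the reverse direction is in fact more complete than the paper's argument, which leaves surjectivity implicit.
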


\begin{proof}

Once the lucky spots are chosen for the lucky cars, the unlucky cars may park in the remaining available spots.
So, an unlucky car may park to the right of a lucky car who parked prior, or to the right of an unavailable spot. Within the available ``unlucky'' spots---i.e., spots not indexed by $L$, the unlucky cars must form increasing subsequences (starting with each lucky car, or unlucky car following an unavailable spot), by the same reasoning as in the proof of Theorem \ref{thm:count for outcomes}. Each car that parks in an unlucky spot must have preferred an earlier spot, and it must have been completely occupied or unavailable before that car checked it. So within each lucky spot, the cars parking in that spot must all have smaller indices than each of the unlucky spots in a contiguous run to the right (even if not all cars in the lucky spot preferred this spot).

By the discussion above, the underlying permutation of an outcome with lucky spots given by a set $L$ will be constructed of increasing runs, which may be broken at the beginning of lucky spots (where the first entry corresponding to the lucky spot is the starting position of a new increasing subsequence) or at unavailable spots (where in the underlying permutation, a new increasing run may start following where an unavailable spot appeared).  By construction, the lengths of these increasing runs are exactly the parts of the associated composition $\delta_{\bsy u, L} =  (\delta_1, \ldots, \delta_k)$. We may alternatively view these objects as permutations with possible descents either at the beginning of a lucky spot, 
or where there are ``gaps'' in which spots are unavailable.

To count such permutations, we refer to a result of MacMahon \cite[Art. 157]{MacMahon}, and obtain that the number of permutations of the desired form is given by the multinomial coefficient \[\binom{n}{\delta_{\bsy u, L}} = \frac{n!}{\delta_1!\, \delta_2 ! \,\cdots\, \delta_k!}.\qedhere\]
\end{proof}
\begin{example}
    Let $\bsy{u} = (1,2,4,5)$.
    Let us count the outcomes in which the lucky spots are given by $L = \{1, 5\}$. 
    To do so, we partition $\bsy{u}$ as follows:
    \[
    \mid 12 \mid 4  \mid 5\mid \;.
    \]
    Then the associated composition $\delta_{\bsy u, L}=(2,1,1)$ and the number of outcomes is $\binom{4 }{ 2,1,1} = \frac{4!}{2!1!1!} = 12$. 
    One can verify that the possible outcomes are
    \[ \left\{
    \begin{tabular}{cccc}
        $\{1\}\{2\} \X \{3\}\{4\}$, &
        $\{1\}\{4\}\X\{2\}\{3\}$, &
        $\{2\}\{3\}\X \{1\}\{4\}$, &
        $\{2\}\{3\} \X\{4\}\{1\}$,\\
        $\{1\}\{2\}\X \{4\}\{3\}$, &
        $\{1\}\{3\}\X\{4\}\{2\}$, &
        $\{2\}\{4\} \X\{1\}\{3\}$, &
        $\{2\}\{4\}\X\{3\}\{1\}$, \\     
        $\{1\}\{3\}\X\{2\}\{4\}$, &
        $\{1\}\{4\}\X \{3\}\{2\}$, &
        $\{3\}\{4\} \X\{1\}\{2\}$, &
        $\{3\}\{4\} \X\{2\}\{1\}$
         \end{tabular}
    \right\}.
    \]
\end{example}

\begin{example}
    Let $\bsy{u} = (2,2,3,5)$.
    Let us count the outcomes in which the lucky spots are given by $L = \{3, 5\}$. 
    To do so, we partition $\bsy{u}$ as follows:
    \[
    \mid 22 \mid 3  \mid\mid 5\mid
    \]
    Then the associated composition is $\delta_{\bsy u, L}=(2,1,0,1)$ and the number of outcomes is again $\binom{4}{2,1,0,1} = \frac{4!}{2!1!0!1!} = 12$. 
    One can verify that the outcomes are
    \[ \left\{
    \begin{tabular}{cccc}
        $\X\{1, 2\} \{3\}\X\{4\}$, &
        $\X\{1, 4\}\{2\}\X\{3\}$, &
        $\X\{2, 3\} \{1\}\X\{4\}$, &
        $\X\{2, 3\} \{4\}\X\{1\}$,\\
        $\X\{1, 2\}\{4\}\X\{3\}$, &
        $\X\{1,3\}\{4\}\X\{2\}$, &
        $\X\{2, 4\} \{1\}\X\{3\}$, &
        $\X\{2, 4\}\{3\}\X\{1\}$, \\     
        $\X\{1, 3\}\{2\}\X\{4\}$, &
        $\X\{1,4\} \{3\}\X\{2\}$, &
        $\X\{3, 4\} \{1\}\X\{2\}$, &
        $\X\{3, 4\} \{2\}\X\{1\}$
         \end{tabular}
    \right\}.
    \]
In this case, the two cars that park in spot 2 are both unlucky but still park in ascending order. The cars in spots 3 and 5 are both lucky and can be chosen as any ordered pair among the four possible cars.
\end{example}

We remark that this construction counts outcomes based on which spots are occupied by lucky cars. However, the actual cars that end up being lucky are not necessarily restricted.

\section{Counting $\bsy{u}$-parking functions with certain sets of lucky cars}\label{sec:uparkingfunctions}
So far, we have looked at counting outcomes for families of certain vector parking functions {with certain lucky sets and lucky spots}. 
In this section, we provide a formula for the number of vector parking functions with a fixed set of lucky cars $I$. We also provide a formula for the number of vector parking functions with a lucky set of fixed size; that is, $|I|=k$. For sake of simplicity in our arguments, we assume that $\X<i$ for any $i\in \mathbb{N}$. 

\subsection{Counting vector parking functions with a fixed set of lucky cars}

Recall that if there exists some $i \notin \bsy{u}$ where $i<\max(\bsy{u})$, then spot $i$ has capacity zero, i.e.\ spot $i$ is unavailable. We begin with a definition.

\begin{definition}
For an ordered set partition $B=B_1B_2\cdots B_M\in \out_{\bsy{u}}(\bsy{a})$ where $M=\max(\bsy{u})$, let $b_i=\max(B_i)$ and if $B_i=\X$ then $b_i=\X$. For each $b\in B_i$, let $s_B(b)$ be the length of the longest subsequence $b_jb_{j+1}\cdots b_{i-1}$ with $b_t<b$  for all $j\leq t\leq i-1$. We denote the length of a sequence $b_1b_{2}\cdots b_{i}$ by $|b_1b_{2}\cdots b_{i}|$.

\end{definition}

\begin{example}\label{ex:outcome2}
Let $\bsy{u}=(1,1,3,3,3)$ and $B=\{2,3\}\X\{1,4,5\}$ where $B_1=\{2,3\}, B_2=\X$ and $B_3=\{1,4,5\}$. Then $b_1=3, b_2=\X$ and $b_3=5$. Then, $s_B(b=2)=0$ and $s_B(b=3)=0$, since $2,3 \in B_1$ and there are no $b_i$'s with index $i$ less than $1$. Now, for each $b\in B_3$, we have the following. For $b=1$, since $\X=b_2<1$ and $3=b_1\nless 1$, we have $s_B(b=1)=|b_2|=1$. For $b=4$, since $\X=b_2<4$ and $3=b_1<4$, it follows that $s_B(b=4)=|b_1b_2|=2$. Similarly, for $b=5$, since $\X=b_2<5$ and $3=b_1<5$, we have $s_B(b=5)=|b_1b_2|=2$. It follows that $s_B(b=1)=1, s_B(b=2)=0, s_B(b=3)=0,  s_B(b=4)=2$ and $s_B(b=5)=2$.
\end{example}

For each $b\in B_i$, the definition $s_B(b)$ counts the number of distinct spots where cars arriving before car $b$ parked contiguously and immediately to the left of car $b$ (if any) and the number of spots that have capacity $0$ contiguously and immediately to the left of car $b$ (if any). We stress that $s_B(b)$ is the number of spots and not the number of cars. We show (in \Cref{lem:possible prefs given an outcome}) that in the case that car $b$ is an unlucky car, the number $s_B(b)$ is exactly the number of potential distinct spots car $b$ could prefer and which car $b$ would find either occupied by cars that arrived and parked before car $b$ or spots that have capacity $0$. We make this precise next.

\begin{definition}
Fix a lucky set $I$ of $\PF(\bsy{u})$ and an outcome $B=B_1B_2\cdots B_M\in \out_{\bsy{u}}(I)$ where $M=\max(\bsy{u})$. For each $b\in B_i$, let $\Pref_I(b,\bsy{u})$ denote the set of preferences of car $b$ in a $\bsy{u}$-parking function $\bsy{a}$ of length $n$ satisfying $\Lucky_{\bsy{u}}(\bsy{a})=I$ and $\out_{\bsy{u}}(\bsy{a})=B$.
\end{definition}

\begin{example}[Continuing \Cref{ex:outcome2}]\label{ex:l function}
If $B=\{2,3\}\X\{1,4,5\}$ and if car $1$ is unlucky, then it could only prefer the unavailable spot to its left and it cannot prefer spot $1$ as it would find it empty and park there. 
This agrees with $s_B(b=1)=1$. If car $4$ is unlucky, then it could only prefer spot $1$, which is occupied by cars $2$ and $3$, or the unavailable spot $2$. This is true because cars $2$ and $3$ are parked to the left of car $4$ and arrived prior to car $4$, and spot $2$ is unavailable, so car $4$ could have preferred this spot and be unlucky. This agrees with $s_B(b=4)=2$.
\end{example}

The count for the number of preferences of unlucky cars detailed in \Cref{ex:l function} holds in general.

\begin{lemma}\label{lem:possible prefs given an outcome}
Fix a lucky set $I$ of $\PF(\bsy{u})$ and fix $B=B_1B_2 \cdots B_M\in\out_{\bsy{u}}(I)$, where $M=\max(\bsy{u})$. Then for each $b\in B_i$,
\[|\Pref_I(b,\bsy{u})| 
    = \begin{cases}
      1&\text{ if } b\in I\\
      s_B(b) & \text{ if } b \notin I,
     \end{cases}
    \]
    and the number of possible $\bsy{u}$-parking functions $\bsy{a}$ with outcome $B$ and lucky set $I$ is equal to
\[\prod_{b\in [n]} |\Pref_{I}(b,\bsy{u})|=\prod_{b\notin I}s_B(b).\]

\end{lemma}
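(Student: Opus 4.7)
The plan is to analyze, for each car $b$, the possible preferences consistent with the given outcome $B$ and lucky set $I$, and then to show that these per-car choices combine independently. Throughout, I would use the convention (stated at the start of the section) that $\X < b$ for every $b \in \NN$, so the single condition $b_r < b$ simultaneously captures both ``spot $r$ is unavailable'' and ``every car in $B_r$ has index less than $b$''.

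Fix $b \in B_i$. If $b \in I$ then car $b$ is lucky, so it parked in its preferred spot; since the outcome places $b$ in spot $i$, the only valid choice is $a_b = i$, giving $|\Pref_I(b, \bsy{u})| = 1$. If $b \notin I$ then $a_b < i$, and I would characterize which $t < i$ cause car $b$ to end up in spot $i$. Assuming inductively that cars $1, 2, \ldots, b-1$ have parked in the spots prescribed by $B$, the cars currently in spot $r$ when car $b$ arrives are exactly $B_r \cap [b-1]$. Hence spot $r$ is unavailable precisely when $b_r = \X$, and full precisely when $\max(B_r) = b_r < b$; by the convention above, both cases collapse to the single condition $b_r < b$. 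Therefore a preference $a_b = t$ routes car $b$ to spot $i$ if and only if $b_r < b$ for every $r$ with $t \leq r \leq i-1$. The set of such $t$ is precisely $\{j, j+1, \ldots, i-1\}$, where $j$ is the smallest index so that $b_j, b_{j+1}, \ldots, b_{i-1}$ all satisfy $b_t < b$, and this set has cardinality $s_B(b)$ by definition.

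For the product formula, I would show that every tuple $(a_b)_{b\in[n]}$ with $a_b \in \Pref_I(b, \bsy{u})$ for all $b$ yields a valid $\bsy{u}$-parking function with outcome $B$ and lucky set $I$. Induct on $b$, maintaining the invariant that cars $1, \ldots, b-1$ have parked as dictated by $B$. If $b \in I$ with $a_b = i$, then spot $i$ currently holds $|B_i \cap [b-1]| < |B_i|$ cars, so it has room and car $b$ parks there as a lucky car. If $b \notin I$ with $a_b \in \{j, \ldots, i-1\}$, then by the first part every spot from $a_b$ through $i-1$ is full or unavailable when car $b$ arrives, so car $b$ passes each one and parks in spot $i$ as an unlucky car. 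Since distinct preference tuples yield distinct parking functions, the total count is $\prod_{b\in[n]} |\Pref_I(b, \bsy{u})| = \prod_{b\notin I} s_B(b)$.

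The main subtlety, and the chief obstacle to a crisp write-up, is verifying that the occupancy/availability of each spot $r$ at the moment car $b$ arrives depends only on $b$ and the outcome $B$, not on the particular valid preferences chosen for cars $1,\ldots,b-1$. Once this invariance-under-choice is pinned down, the condition defining $\Pref_I(b,\bsy{u})$ decouples from earlier choices, which is exactly what licenses the multiplicative count and prevents any double-counting.
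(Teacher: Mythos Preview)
Your proposal is correct and follows essentially the same approach as the paper's proof: identify, for each car $b$, the admissible preferences by looking leftward from spot $i$ at the maximal run of spots that are either unavailable or completely filled by lower-indexed cars, and then take the product over all cars. Your write-up is in fact more careful than the paper's on the point you flag as the ``main subtlety'': the paper simply asserts that the total count is the product $\prod_{b\in[n]}|\Pref_I(b,\bsy{u})|$ without explicitly verifying that every tuple of admissible preferences yields a valid $\bsy{u}$-parking function with outcome $B$ and lucky set $I$, whereas your inductive argument on $b$ supplies exactly this missing verification and thereby justifies both the reverse inclusion for $\Pref_I(b,\bsy{u})$ and the independence needed for the product.
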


\begin{proof}
We count possible preferred parking spots for each car $b \in [n]$.
If $b$ is a lucky car, its preference must be the spot in which it parked. So for  $b \in I$, $|\Pref_{I}(b,\bsy{u})| = 1$.

Otherwise, suppose car $b$ is unlucky. In this case, its preference must have been to the left of the spot in which it parked. 
We look to the left at all spots which it could have preferred. It could not have preferred a spot that is occupied by a car of higher index, since if that was the case, then it would have parked in that spot first. Similarly, it could not have preferred any spot to the left of a car of higher index. 

So we look only at consecutive spots to the left, between the spot containing car $b$ and the spot containing the nearest car of higher index. If no car of higher index appears to the left of the spot where car $b$ parked, we consider all spots to the left.

Among spots to the left that are either unavailable or occupied completely by cars of lower index, the unlucky car $b$ could have preferred any of these to end up parking where it did. So each of these spots are possible preferences for car $b$, and they are counted by $s_B(b)$. So we have, for $b \notin I$, $|\Pref_{I}(b,\bsy{u})|= s_B(b)$. 

Then we may count the total number of parking functions corresponding to this outcome as the product of possible preferences for each of the $n$ cars; that is, \[\prod_{b\in [n]} |\Pref_{I}(b,\bsy{u})|.\]

Since $|\Pref_{I}(b,\bsy{u})| = 1$ for all $b \in I$, these terms do not change the product so we may restrict attention to all $b \notin I$. The formula reduces to \[\prod_{b\notin I}s_B(b). \qedhere\]
\end{proof}

\begin{example}[Continuing \Cref{ex:outcome2}]\label{ex:pref}
 Suppose that $I=\{2,3,4\}$ and $B=\{2,3\}\X\{1,4,5\}$. 
 By \Cref{lem:possible prefs given an outcome}, the number of $\bsy{u}$-parking functions $\bsy{a}$ satisfying $\out_{\bsy{u}}(\bsy{a})=B$ and $\Lucky_{\bsy{u}}(\bsy{a})=I$ is given by
\[
     \prod_{b\in [n]} |\Pref_{I}(b,\bsy{u})|=\prod_{b\notin I}s_B(b)= s_B(b=1)\cdot s_B(b=5) =2.
\]
Indeed, if $\bsy{u}=(1,1,3,3,3)$ then the only $\bsy{u}$-parking functions with lucky set $I=\{2,3,4\}$ and outcome $B=\{2,3\}\X\{1,4,5\}$ are
\[
\bsy{a}=(2,1,1,3,\fbox{1}) \mbox{ and } \bsy{a}'=(2,1,1,3,\fbox{2}), 
\]
where we box the difference between these $\bsy{u}$-parking functions, which denote the possible preferences of car $5$ being spots $1$ and $2$. 
Since cars $2$, $3$, and $4$ are lucky, their preferences are fixed to be the spots where they parked.
Since car $1$ parks in spot $3$ and is unlucky, and since cars $2$ and $3$ park in spot $1$, we have that car $1$ may only prefer spot $2$.
Since car $5$ is unlucky, and spot $1$ is fully occupied when car $5$ goes to park and spot $2$ is unavailable, car $5$ may prefer either spot $1$ or spot $2$.
\end{example}

We now give a formula for the number of $\bsy{u}$-parking functions with a fixed set of lucky cars.

\begin{theorem}
Fix a lucky set $I$ of $\PF(\bsy{u})$. Let $\out_{\bsy{u}}(I)$ denote the set of outcomes $B=B_1B_2\ldots B_M$ where $M=\max(\bsy{u})$ and lucky set $I$ where $B_i$ denotes the set of cars that parked in spot $i$ and if $B_i=\X$ then spot $i$ is unavailable (that is, no cars can park there). 
Then
\[|\LPF_{\bsy{u}}(I)|=\sum_{B\in \out_{\bsy{u}}(I)} \left( \prod_{b\notin I}s_B(b)\right).\]
\end{theorem}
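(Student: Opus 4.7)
The plan is to partition $\LPF_{\bsy{u}}(I)$ according to the outcome and then apply the previous lemma to count each block of the partition. Specifically, every $\bsy{u}$-parking function $\bsy{a}\in \LPF_{\bsy{u}}(I)$ produces a unique outcome $\out_{\bsy{u}}(\bsy{a})$, and this outcome belongs to $\out_{\bsy{u}}(I)$ by definition. Hence the sets
\[
\LPF_{\bsy{u}}(I,B) \coloneqq \{\bsy{a}\in \LPF_{\bsy{u}}(I) : \out_{\bsy{u}}(\bsy{a})=B\}
\]
for $B\in \out_{\bsy{u}}(I)$ are pairwise disjoint and their union equals $\LPF_{\bsy{u}}(I)$.

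Next, I would fix an arbitrary $B\in \out_{\bsy{u}}(I)$ and count $|\LPF_{\bsy{u}}(I,B)|$. This is precisely the content of \Cref{lem:possible prefs given an outcome}: a $\bsy{u}$-parking function with lucky set $I$ and outcome $B$ is determined entirely by specifying a preference $a_b\in \Pref_I(b,\bsy{u})$ for each car $b\in[n]$, and these choices can be made independently. Since $|\Pref_I(b,\bsy{u})|=1$ for lucky cars $b\in I$ and $|\Pref_I(b,\bsy{u})|=s_B(b)$ for unlucky cars $b\notin I$, the lemma yields
\[
|\LPF_{\bsy{u}}(I,B)| \;=\; \prod_{b\in [n]} |\Pref_I(b,\bsy{u})| \;=\; \prod_{b\notin I} s_B(b).
\]

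Finally, summing over all outcomes in $\out_{\bsy{u}}(I)$ gives
\[
|\LPF_{\bsy{u}}(I)| \;=\; \sum_{B\in \out_{\bsy{u}}(I)} |\LPF_{\bsy{u}}(I,B)| \;=\; \sum_{B\in \out_{\bsy{u}}(I)} \left(\prod_{b\notin I} s_B(b)\right),
\]
as claimed. There is no real obstacle here: all of the substantive combinatorial work has already been absorbed into \Cref{lem:possible prefs given an outcome} (which characterizes the available preferences per car given a target outcome and lucky set). The only thing to verify carefully is that different choices of preference vectors in $\prod_{b\in[n]}\Pref_I(b,\bsy{u})$ really do produce distinct parking functions with the specified outcome and lucky set, which is immediate since the resulting preference list $(a_1,\ldots,a_n)$ is itself the parking function, and the construction in the lemma guarantees that each such preference vector yields outcome $B$ with lucky set exactly $I$.
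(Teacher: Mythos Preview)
Your proposal is correct and follows essentially the same approach as the paper: partition $\LPF_{\bsy{u}}(I)$ by outcome, invoke \Cref{lem:possible prefs given an outcome} to count each fiber as $\prod_{b\notin I}s_B(b)$, and sum over $B\in\out_{\bsy{u}}(I)$. The paper's proof is just a terser version of the same argument.
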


\begin{proof}
\Cref{lem:possible prefs given an outcome} established that for a specific outcome and lucky set, the number of $\bsy{u}$-parking functions yielding that outcome and lucky set is given by $\prod_{b\notin I}s_B(b)$. Since each $\bsy{u}$-parking function corresponds to exactly one outcome, we may compute the total number of $\bsy{u}$-parking functions with a given lucky set $I$ by summing over all possible outcomes with lucky set $I$, and counting the $\bsy{u}$-parking functions corresponding to each. The result follows.      
\end{proof}

\subsection{Counting vector parking functions when $k$ cars are lucky}
In this section, we give a formula for the number of $\bsy{u}$-parking functions with a lucky set of size $k$. {For ease of our arguments, our convention is that $0^0=1$}. 
In this section, we view an outcome of a vector parking function as usual, but without the unavailable spots $\X$ and we reindex the nonempty parts in the outcome. This will make the outcome an ordered set partition of $[n]$. See the following for an example.

\begin{example}\label{ex: dropping the unavailable spots}
If $\bsy{u}=(3,3,5,5)$ and $\out_{\bsy{u}}(\bsy{a})=\X\X\{2,3\}\X\{1,4\}$ for some $\bsy{a}$, then the outcome becomes $B=\{2,3\}\{1,4\}$ where $B_1=\{2,3\}$ and $B_2=\{1,4\}$.
\end{example}

\noindent We first set some notation as follows. Let 
\[\bsy{u}=(\underbrace{u_1,u_1,\ldots, u_1}_{m_1}, \underbrace{u_2,u_2,\ldots, u_2}_{m_2}, \ldots, \underbrace{u_r,u_r,\ldots ,u_r}_{m_r}),\]
where for each $u_j\in \bsy{u}$, $u_j$ has multiplicity $m_j$ for all $j\in [r]$.

Suppose there are $k$ lucky cars. Let $B=(B_1,B_2,\ldots,B_r)$ be an ordered set partition of the interval of integers $[\sum_{j=1}^r m_j]=\{1,2,\ldots, m_1+m_2+\cdots+m_r\}$ where $|B_j|=m_j$ (each $B_j$ contains the cars that will park in spot $i_j$). Let $\kappa = (k_1,\ldots, k_r)$ be a partition of $k$ such that $k_j$ denotes the number of lucky cars in set $B_j$. Moreover, let $L_j$ be the set that contains the lucky cars in $B_j$ where $|L_j|=k_j$.

\begin{definition}
For $0<t<j$, let $\ell_{j,t} $ be the number of unlucky cars in set $B_j$ such that when they park, spots {$u_t$} through {$u_{j-1}$} are completely occupied. We define $\ell_{j,j}=|B_j\setminus L_j|=m_j-k_j$ (total number of unlucky cars in $B_j$) and $\ell_{j,0}=0$. 
\end{definition}
\begin{lemma}
For $0<t<j$, the count $\ell_{j,t} $ is given by
    \[
\ell_{j,t} \coloneq \left|\left\{ b\in B_j \; : \; b>\max\left(\bigcup_{d=t}^{j-1}B_d\right) \right\}  \cap \left(B_j\setminus L_j\right)\right| .
\]

\end{lemma}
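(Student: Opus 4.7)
The plan is to unpack both descriptions and show they count the same set, namely
\[
E_{j,t} \;\coloneq\; \bigl\{\, b \in B_j \setminus L_j \;:\; b > \max\bigl(\textstyle\bigcup_{d=t}^{j-1}B_d\bigr)\,\bigr\}.
\]
By definition, $\ell_{j,t}$ counts unlucky cars $b \in B_j$ for which spots $u_t, u_{t+1}, \ldots, u_{j-1}$ are completely occupied at the moment $b$ enters the street. The right-hand side is precisely $|E_{j,t}|$. So it suffices to prove that an unlucky car $b \in B_j$ finds spots $u_t, \ldots, u_{j-1}$ completely occupied if and only if $b \in E_{j,t}$.

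First I would record the key bookkeeping identity: since $B = (B_1, \ldots, B_r)$ records, in order, which cars park in the $r$ occupied spots $u_1 < u_2 < \cdots < u_r$, the set of cars that end up parked in spots $u_t, \ldots, u_{j-1}$ is exactly $U \coloneq \bigcup_{d=t}^{j-1} B_d$, and the total capacity of these spots is $m_t + \cdots + m_{j-1} = |U|$. Thus these spots are completely occupied at some moment in the parking process if and only if every car of $U$ has already entered the street by that moment.

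Now fix $b \in B_j \setminus L_j$ and consider the moment car $b$ enters the street. Cars enter in order of index, so the cars already present on the street at that moment are a subset of $\{1, 2, \ldots, b-1\}$ (in fact, they are exactly $\{1,\ldots,b-1\}$). By the bookkeeping identity, spots $u_t,\ldots,u_{j-1}$ are completely occupied when $b$ arrives if and only if $U \subseteq \{1,\ldots,b-1\}$, which is equivalent to $\max(U) < b$. This gives the desired equivalence $b \in E_{j,t}$, so $\ell_{j,t} = |E_{j,t}|$, which is exactly the stated formula.

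The main thing to be careful about is the direction ``spots completely occupied $\Rightarrow$ $b > \max U$'': we need the \emph{only-if} direction, which uses that the cars eventually filling these spots are precisely those in $U$ (no more, no fewer), so if any car of $U$ has index $\geq b$ then at least one of spots $u_t,\ldots,u_{j-1}$ must still have unused capacity when $b$ arrives. The capacity-counting step above handles this, and it is the only slightly delicate point; the rest is a direct translation of definitions.
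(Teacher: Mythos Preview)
Your proof is correct and follows essentially the same idea as the paper: translating ``spots $u_t,\ldots,u_{j-1}$ are completely occupied when $b$ arrives'' into the index comparison $b>\max\bigl(\bigcup_{d=t}^{j-1}B_d\bigr)$. If anything, your argument is more careful than the paper's, which only writes out the forward implication explicitly; your bookkeeping identity makes the converse transparent as well.
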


\begin{proof}

Let $b$ be a car included in the set counted by $\ell_{j,t}$.
When this car parks, every spot from {$u_t$} to {$u_{j-1}$} is filled. So every car included in those spots in the outcome has already parked; therefore they are all smaller in index than car $b$. That is, $b>\max\left(\bigcup_{d=t}^{j-1}B_d\right)$. Also, in order for car $b$ to be included in this count, it must be in the set $B_j$ and be unlucky, so it is in $B_j \setminus L_j$. This is accounted for with the intersection and the result follows.
\end{proof}

\begin{theorem}\label{thm:countforupfs}
Let $\bsy{u}=(\underbrace{u_1,u_1,\ldots, u_1}_{m_1}, \underbrace{u_2,u_2,\ldots, u_2}_{m_2}, \ldots, \underbrace{u_r,u_r,\ldots ,u_r}_{m_r})$ where for each $u_j\in \bsy{u}$, $u_j$ has multiplicity $m_j$ for all $j\in [r]$. Then the number of $\bsy{u}$-parking functions with $k$ lucky cars is
\[ 
\left|\bigcup_{|I|=k} \LPF_{\bsy{u}}(I)\right| = 
\sum_{{\substack{\kappa = (k_1,k_2,\ldots, k_r), \\
\sum_{i=1}^{r}k_i=k,\\
0\leq k_i\leq m_i}}}
\,\sum_{{B = (B_1,B_2,\ldots, B_r) \in \out_{\bsy u}}}\binom{m_1}{k_1}(u_1-1)^{m_1-k_1} S(B ; \kappa),
\]
where
\[S(B;\kappa) \coloneq \sum_{{\substack{(L_2,L_3,\ldots,L_r), \\ L_d\subseteq B_d, |L_d|=k_d}}}\prod_{j=2}^r \prod_{t=1}^{j} (u_{j}-u_{t-1}-1)^{\ell_{j,t}-\ell_{j,t-1}}.\]

\end{theorem}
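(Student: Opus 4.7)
The plan is to count the $\bsy u$-parking functions with exactly $k$ lucky cars by stratifying first by the outcome $B = (B_1, \ldots, B_r)$ (viewed, after suppressing unavailable spots, as an ordered set partition of $[n]$ with $|B_j| = m_j$), then by the composition $\kappa = (k_1, \ldots, k_r)$ recording how many lucky cars lie in each block. For each fixed pair $(B, \kappa)$, I then sum over the choices of lucky subsets $L_j \subseteq B_j$ with $|L_j| = k_j$, and finally apply \Cref{lem:possible prefs given an outcome}, which says that given the outcome $B$ and the lucky set $L = L_1 \sqcup \cdots \sqcup L_r$, the number of preference vectors $\bsy a$ realizing them is exactly $\prod_{b \notin L} s_B(b)$. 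The whole task reduces to evaluating this product carefully and arranging the sums in the order above.

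The block $B_1$ is handled separately and accounts for the prefactor $\binom{m_1}{k_1}(u_1 - 1)^{m_1 - k_1}$. Indeed, every spot in $\{1, 2, \ldots, u_1 - 1\}$ is unavailable, so for each $b \in B_1$ one has $s_B(b) = u_1 - 1$ (with the convention $0^0 = 1$ covering $u_1 = 1$, which forces $k_1 = m_1$). Choosing which $k_1$ among the $m_1$ cars of $B_1$ are lucky contributes $\binom{m_1}{k_1}$, and each of the $m_1 - k_1$ unlucky cars contributes $u_1 - 1$ preferences independently.

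For $j \geq 2$, I would compute $s_B(b)$ for each unlucky $b \in B_j \setminus L_j$ by a \emph{depth} analysis. Call a block $B_d$ with $d < j$ \emph{green for $b$} if $\max(B_d) < b$ and \emph{red} otherwise (the gaps between consecutive $u_d$'s are always unavailable and so automatically contribute). Scanning leftward from $u_j - 1$, the value $s_B(b)$ is the number of positions passed before encountering the first red $B_d$; thus if the rightmost red block is $B_{t-1}$ (using $u_0 = 0$ when no red block exists, so that $t = 1$), then
\[
s_B(b) = u_j - u_{t-1} - 1.
\]
Call $t$ the depth of $b$; it ranges over $\{1, \ldots, j\}$. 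Comparing with the definition in the theorem, the set of unlucky $b \in B_j$ of depth at most $t$ is precisely those satisfying $b > \max(\bigcup_{d=t}^{j-1} B_d)$, so it has cardinality $\ell_{j,t}$, and hence the number of unlucky cars of depth exactly $t$ equals $\ell_{j,t} - \ell_{j,t-1}$ (using $\ell_{j,0} = 0$ and $\ell_{j,j} = m_j - k_j$). Multiplying over all depths and blocks gives
\[
\prod_{b \in B_j \setminus L_j} s_B(b) \;=\; \prod_{t=1}^{j} (u_j - u_{t-1} - 1)^{\ell_{j,t} - \ell_{j,t-1}},
\]
and taking the product over $j = 2, \ldots, r$ and summing over admissible $(L_2, \ldots, L_r)$ yields $S(B; \kappa)$.

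Combining the prefactor from $B_1$ with $S(B; \kappa)$ and summing over all ordered set partitions $B \in \out_{\bsy u}$ and all compositions $\kappa$ of $k$ with $0 \leq k_i \leq m_i$ produces the claimed formula. The main technical obstacle is the depth computation in the third paragraph: one must carefully verify, using the dynamics of the parking rule, that an unlucky car $b \in B_j$ could have preferred any spot from $u_{t-1} + 1$ up to $u_j - 1$ and no earlier spot, which amounts to showing that the spots $u_t, u_{t+1}, \ldots, u_{j-1}$ are already fully occupied when $b$ arrives (because the cars there have smaller index) while $B_{t-1}$ is not yet full (because it contains a car of larger index that has not yet parked). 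Once this translation between the depth of $b$ and the condition defining $\ell_{j,t}$ is established, the bookkeeping to recover the stated product is routine.
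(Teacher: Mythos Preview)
Your proposal is correct and follows essentially the same strategy as the paper: stratify by outcome $B$, by the composition $\kappa$, and by the lucky subsets $(L_j)$, then count preference vectors block by block via the ``depth'' of each unlucky car. The only cosmetic differences are that you invoke \Cref{lem:possible prefs given an outcome} explicitly to reduce to computing $\prod_{b\notin L} s_B(b)$, whereas the paper re-derives that count inline, and your red/green block language is a tidy repackaging of the paper's case analysis of which spots $u_t,\ldots,u_{j-1}$ are full when car $b$ arrives; in both presentations the crux is the identification $s_B(b)=u_j-u_{t-1}-1$ and the observation that exactly $\ell_{j,t}-\ell_{j,t-1}$ unlucky cars in $B_j$ have depth $t$.
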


\begin{proof}
Recall that in this subsection, $B_i$ is no longer the set of cars that parked in spot $i$, but given the reindexing it is now is the $i$th nonempty set that appears in the outcome. 
By assumption, we have $r$ spots with capacities given by $m_1,\ldots, m_r$. Fix an outcome $B =(B_1,B_2,\ldots, B_r)\in\out_{\bsy{u}}$, and suppose that there are $k$ lucky cars. Let $\kappa = (k_1,k_2,\ldots, k_r)$ be a weak composition of $k$, such that the $i$th spot contains $k_i$ lucky cars. Then we have that $0 \leq k_i \leq m_i$ for all $i$.
We count the number of parking functions corresponding to this outcome.

The first available spot is $u_1$.
We have that $B_1$ is the set of cars which parked in spot $u_1$, and $k_1$ of them are lucky. 
First we choose which cars are lucky; there are $ \binom{m_1}{k_1}$ options. For these cars, their preference is fixed to be $u_1$. 

Then there are $m_1-k_1$ unlucky cars remaining in $B_1$. We count the possible preferences for the unlucky cars in this set. 
For each such car, their preference could have been any spot earlier than $u_1$, so they each have $(u_1-1)$ options for their preference. Hence, the number of ways to count the preferences of cars in the set $B_1$ is given by 
$\binom{m_1}{k_1}(u_1-1)^{m_1-k_1}$.

We now consider the possible preferences for the cars in spots $B_2,\ldots, B_r$. Let $S(B,\kappa)$ denote the number of possible preferences among the cars parking in the remaining $r-1$ available spots.

Fix $u_j$ to be one of the spots after $u_1$, and let $L_j$ be the set of lucky cars in spot $u_j$ (so $L_j$ is a subset of $B_j$ consisting of $k_j$ cars). 
For the cars in $L_j$, their preference is fixed to be $u_j$ (the spot where they parked). 
We consider sets of  possible preferences for the remaining $m_j-k_j$ unlucky cars.

Fix a spot $u_t\leq u_{j-1}$ and define $u_0 =0$. 
We count the number of unlucky cars $c$ in $B_j$ whose set of possible preferences contains any spot between $u_{t-1}$ and $u_j$, not including $u_{t-1}$ (or $u_j$, since $c$ is an unlucky car). When such a car $c$ went to park, its preference was after spot $u_{t-1}$, so it did not check this spot, and all spots from $u_t$ to $u_{j-1}$ were full. However, spot $u_{t-1}$ would not have been full since if it was, then car $c$ could have preferred spot $u_{t-1}$ as well. 

The number of such cars is given by $\ell_{j,t}-\ell_{j,t-1}$. These cars could have preferred any spot in the set $\{u_{t-1}+1, ,u_{t-1}+2,\ldots, u_{j}-1\}$, and in particular not any spot $u_{t-1}$ or earlier, to end up parking in spot $u_j$.

For each such car, there are {$u_{j}-1 - (u_{t-1} +1 ) +1 = u_{j} - u_{t-1} -1 $} options for its preference. 
Then the number of possible preferences among all cars which could have preferred any spot $u_t \leq u_{j-1}$ is given by $\prod_{t=1}^{j-1} (u_{j}-u_{t-1}-1)^{\ell_{j,t}-\ell_{j,{t-1}}}$.

The unlucky cars that remain to be accounted for are those that preferred a spot between $u_{j-1}$ and $u_j$, but were unable to park there since it was unavailable.
There are {$u_j-u_{j-1}-1$} unavailable spots between $u_{j-1}$ and $u_j$. To avoid double-counting cars that were accounted for in the previous product, we only consider cars such that when they parked, spot $u_{j-1}$ was available; in other words spot $u_{j-1}$ was not full yet. This guarantees that their preference could not have been $u_{j-1}$ or earlier (or else they would have parked there instead).

The number of unlucky cars such that spot {$u_{j-1}$} was {available} when they parked is given by $m_j-k_j-\ell_{j,j-1}$. 
It follows that $(u_j-u_{j-1}-1)^{m_j-k_j-\ell_{j,j-1}}$ is the number of possible preferences among unlucky cars preferring a spot after $u_{j-1}$ and parking in spot $u_j$.
In total we have, for a fixed spot $u_j$, $$ \left(\prod_{t=1}^{j-1} (u_{j}-u_{t-1}-1)^{\ell_{j,t}-\ell_{j,t-1}}\right)\left(u_j-u_{j-1}-1\right)^{m_j-k_j-\ell_{j,j-1}}$$ possible preferences for the cars in the set $B_j$.
With the convention that $\ell_{j,j} = m_j - k_j$, this product simplifies to
\[ \prod_{t=1}^{j} (u_{j}-u_{t-1}-1)^{\ell_{j,t}-\ell_{j,t-1}}.\] 
Taking the product over all $j=2,\ldots, r$, we have for
a given outcome $B = (B_1,B_2,\ldots, B_r)$,  with lucky cars in each of the spots given by $(L_2,\ldots, L_r)$, a total of
\[ \prod_{j=2}^r \prod_{t=1}^{j} (u_{j}-u_{t-1}-1)^{\ell_{j,t}-\ell_{j,t-1}}\] possible preference lists for the cars in spots $2$ through $r$.
Summing over all possible distributions of lucky cars $(L_2,\ldots, L_r)$, we have 
\[S(B;\kappa) = \sum_{\substack{(L_2,L_3,\ldots,L_r), \\L_d\subseteq B_d, |L_d|=k_d}}\prod_{j=2}^r \prod_{t=1}^{j} (u_{j}-u_{t-1}-1)^{\ell_{j,t}-\ell_{j,t-1}}\]
total options for the preferences of cars parking in the last $r-1$ spots. Then for a fixed outcome $B = (B_1,B_2,\ldots, B_r) \in \out(\bsy u)$, the total number of corresponding parking functions is
\[
\binom{m_1}{k_1}(u_1-1)^{m_1-k_1} S(B ; \kappa).
\]
Since each parking function corresponds to one outcome, we may enumerate all parking functions with $k$ lucky cars by summing over all possible outcomes with $k$ lucky cars. Then the number of parking functions of the desired form is 

\[
\sum_{\substack{\kappa = (k_1,\ldots, k_r),\\0\leq k_i\leq m_i,\\\sum_{i=1}^{r}k_i=k}}
\,\sum_{{B = (B_1,B_2,\ldots, B_r) \in \out(\bsy u)}}\binom{m_1}{k_1}(u_1-1)^{m_1-k_1} S(B ; \kappa). \qedhere
\]
\end{proof}

With restrictions on the form of $\bsy{u}$, the formula in \Cref{thm:countforupfs} can be simplified further.

\begin{corollary}
Let
$\bsy{u}=(\underbrace{i,i,\ldots, i,}_{n-1 \text{ times }} j)$. Then the number of $\bsy{u}$-parking functions with $k$ lucky cars is
\[ 
\left|\bigcup_{|I|=k} \LPF_{\bsy{u}}(I)\right| = \binom{n-1}{k}(i-1)^{n-1-k} \left((n-1)  (j-i-1)+ (j-1) \right)+n\binom{n-1}{k-1}(i-1)^{n-k}.
\]
\end{corollary}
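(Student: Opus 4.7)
The plan is to specialize \Cref{thm:countforupfs} to the case $r = 2$, where $\bsy u$ has multiplicities $m_1 = n-1$ on $u_1 = i$ and $m_2 = 1$ on $u_2 = j$. The outer sum in \Cref{thm:countforupfs} then ranges over compositions $\kappa = (k_1,k_2)$ of $k$ satisfying $0 \le k_1 \le n-1$ and $0 \le k_2 \le 1$, so it naturally splits into the two cases $k_2 = 0$ and $k_2 = 1$. For each fixed $\kappa$, an outcome $B = (B_1, B_2) \in \out_{\bsy u}$ is an ordered set partition of $[n]$ with $|B_2| = 1$, giving exactly $n$ outcomes (one for each choice of singleton element).

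First I would handle the case $k_2 = 0$ (so $k_1 = k$). Here the unique car in $B_2$ is unlucky, $L_2 = \emptyset$, and the inner product appearing in $S(B;\kappa)$ collapses to $(j-1)^{\ell_{2,1}}(j-i-1)^{1-\ell_{2,1}}$ since $\ell_{2,0} = 0$ and $\ell_{2,2} = m_2 - k_2 = 1$. By the definition of $\ell_{2,1}$, we have $\ell_{2,1} = 1$ exactly when the sole element of $B_2$ exceeds $\max(B_1)$, which occurs only for the unique outcome $B_2 = \{n\}$; the remaining $n-1$ outcomes contribute $\ell_{2,1} = 0$. Summing over the $n$ outcomes yields $(j-1) + (n-1)(j-i-1)$, and multiplying by $\binom{n-1}{k}(i-1)^{n-1-k}$ produces the first summand of the claimed formula.

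Next I would treat the case $k_2 = 1$ (so $k_1 = k-1$). Here the unique car in $B_2$ is lucky, so $L_2 = B_2$ and $B_2 \setminus L_2 = \emptyset$, forcing $\ell_{2,1} = \ell_{2,2} = 0$. The inner product collapses to $1$ for every outcome, so $\sum_{B} S(B;\kappa) = n$, and multiplying by $\binom{n-1}{k-1}(i-1)^{n-k}$ gives the second summand. Adding the contributions from the two cases yields the stated formula; the edge cases $k = 0$ and $k = n$ are handled automatically by the conventions $\binom{n-1}{-1} = \binom{n-1}{n} = 0$.

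The only subtle step is the case analysis for $\ell_{2,1}$ in the $k_2 = 0$ case, which depends on whether the singleton element of $B_2$ equals $n$. Beyond that, since $r = 2$ the doubly-indexed product in \Cref{thm:countforupfs} has at most two factors, so no further combinatorial manipulation is required.
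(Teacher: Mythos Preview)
Your proposal is correct and follows essentially the same route as the paper's own proof: specialize \Cref{thm:countforupfs} to $r=2$, split on $k_2\in\{0,1\}$, and in the $k_2=0$ case observe that $\ell_{2,1}=1$ precisely when $B_2=\{n\}$ (the paper phrases this as whether the car $a$ in spot $j$ is the $n$th car). The computations and the resulting two summands match exactly.
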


\begin{proof}
We apply the formula in \Cref{thm:countforupfs}, with $m_1 = n-1$, $m_2 =1$, $u_1=i$, and $u_2=j$.
Suppose $a$ is the car that ends up parking in spot $j$. Then $B_2=\{a\}$. Either $a$ is a lucky car or an unlucky car. Hence, $k_2 = 0$ or $k_2 =1$. If $k_2=0$ then all of the lucky cars park in spot $i$, and if $k_2=1$ then one lucky car parks in spot $j$ and the remaining $k-1$ lucky cars park in spot $i$.
The summation is then

\begin{equation}
  \label{eq:corollaryupfs}
\underbrace{\sum_{B_2 = \{a\}}\binom{n-1}{k_1}(i-1)^{n-1-k_1} S(B ; \kappa)}_{k_1=k,k_2=0} + \underbrace{\sum_{B_2 = \{a\}}\binom{n-1}{k_1}(i-1)^{n-1-k_1} S(B ; \kappa)}_{k_1=k-1,k_2=1},
\end{equation}
and for a composition $\kappa = (k_1, k_2)$, we have \begin{align*}
S(B;\kappa) &= \sum_{{\substack{(L_2,L_3,\ldots,L_r),\\L_d\subseteq B_d, |L_d|=k_d}}}\prod_{j=2}^r \prod_{t=1}^{j} (u_{j}-u_{t-1}-1)^{\ell_{j,t}-\ell_{j,t-1}} \\
&=\sum_{{\substack{(L_2),\\L_2\subseteq B_2, |L_2|=k_2}}}\prod_{j=2}^2 \prod_{t=1}^{j} (u_{j}-u_{t-1}-1)^{\ell_{j,t}-\ell_{j,t-1}}\\
&=\sum_{L_2\subseteq B_2, |L_2|=k_2} \prod_{t=1}^{2} (u_{2}-u_{t-1}-1)^{\ell_{2,t}-\ell_{2,t-1}}\\
&=\sum_{L_2\subseteq B_2, |L_2|=k_2} (u_{2}-u_{0}-1)^{\ell_{2,1}-\ell_{2,0}} (u_{2}-u_{1}-1)^{\ell_{2,2}-\ell_{2,1}} \\
&=\sum_{L_2\subseteq B_2, |L_2|=k_2} (j-1)^{\ell_{2,1}} (j-i-1)^{\ell_{2,2}-\ell_{2,1}}.
\end{align*}

Then the summation in \eqref{eq:corollaryupfs} becomes
\begin{align}
\notag
&\underbrace{\sum_{B_2 = \{a\}}\binom{n-1}{k}(i-1)^{n-1-k} \sum_{L_2 = \emptyset}  (j-1)^{\ell_{2,1}} (j-i-1)^{\ell_{2,2}-\ell_{2,1}}}_{k_1=k,k_2=0}\\
&+ \underbrace{\sum_{B_2 = \{a\}}\binom{n-1}{k-1}(i-1)^{n-1-k+1} \sum_{L_2= B_2}  (j-1)^{\ell_{2,1}} (j-i-1)^{\ell_{2,2}-\ell_{2,1}}}_{k_1=k-1,k_2=1}.\label{eq:will simplify}
\end{align}
The sum $\sum_{L_2= B_2} (j-1)^{\ell_{2,1}} (j-i-1)^{\ell_{2,2}-\ell_{2,1}}$ is equal to $1$ since there is only one instance when $L_2 = B_2$ and in this case, $\ell_{j,t}=0$ always since there are no unlucky cars in the set to count. Hence, the two underbraced terms in \eqref{eq:will simplify} become
\begin{align}
&\underbrace{\sum_{B_2 = \{a\}}\binom{n-1}{k}(i-1)^{n-1-k} \sum_{L_2 = \emptyset}  (j-1)^{\ell_{2,1}} (j-i-1)^{\ell_{2,2}-\ell_{2,1}}}_{k_1=k,k_2=0}+ \underbrace{\sum_{B_2 = \{a\}}\binom{n-1}{k-1}(i-1)^{n-k} }_{k_1=k-1,k_2=1} \notag\\
&= \underbrace{\sum_{B_2 = \{a\}}\binom{n-1}{k}(i-1)^{n-1-k} \sum_{L_2 = \emptyset} (j-1)^{\ell_{2,1}} (j-i-1)^{\ell_{2,2}-\ell_{2,1}}}_{k_1=k,k_2=0}+ \underbrace{n\binom{n-1}{k-1}(i-1)^{n-k} }_{k_1=k-1,k_2=1} .\label{eq:left sum}
\end{align}
We break up the left sum in \eqref{eq:left sum}
and arrive at
\begin{align}
    &\sum_{B_2 = \{a\}}\binom{n-1}{k}(i-1)^{n-1-k} \sum_{L_2 = \emptyset}  (j-1)^{\ell_{2,1}} (j-i-1)^{\ell_{2,2}-\ell_{2,1}}\notag\\
&=\binom{n-1}{k}(i-1)^{n-1-k}\left(\sum_{\substack{B_2 = \{a\},\\ 1\leq a\leq n-1}}
 \sum_{L_2 = \emptyset}  (j-1)^{\ell_{2,1}} (j-i-1)^{\ell_{2,2}-\ell_{2,1}}+\sum_{B_2 = \{n\}}\sum_{L_2 = \emptyset}  (j-1)^{\ell_{2,1}} (j-i-1)^{\ell_{2,2}-\ell_{2,1}}\right).\label{eq:left sum again}
\end{align} 
When $L_2 = \emptyset$ we have $\ell_{2,2}=1$. If $B_2 = \{a\}$ for $1\leq a \leq n$ then $\ell_{2,1}=0$, since spot $i$ will not be completely occupied when car $a$ parks. 
Otherwise, if  $B_2 = \{n\}$ then $\ell_{2,1}=1$ since the car in $B_2$ is the last car to park.
The sum in \eqref{eq:left sum again} simplifies to
\begin{align*}
&\binom{n-1}{k}(i-1)^{n-1-k}\left(\sum_{\substack{B_2 = \{a\},\\ 1\leq a\leq n-1}}
 \sum_{L_2 = \emptyset}  (j-1)^{0} (j-i-1)^{1-0}+\sum_{B_2 = \{n\}}\sum_{L_2 = \emptyset}  (j-1)^{1} (j-i-1)^{1-1} \right)\\
 &=\binom{n-1}{k}(i-1)^{n-1-k}\left(\sum_{\substack{B_2 = \{a\},\\ 1\leq a\leq n-1}}
 \sum_{L_2 = \emptyset}  (j-i-1)+\sum_{B_2 = \{n\}}\sum_{L_2 = \emptyset}  (j-1) \right)\\
 &=\binom{n-1}{k}(i-1)^{n-1-k} \left((n-1)  (j-i-1)+ (j-1) \right).
\end{align*} 
In total we obtain
\[\binom{n-1}{k}(i-1)^{n-1-k} \left((n-1)  (j-i-1)+ (j-1) \right)+n\binom{n-1}{k-1}(i-1)^{n-k}
\]
$\bsy{u}$-parking functions with $k$ lucky cars.

\end{proof}

\begin{remark}
    Instead of applying \Cref{thm:countforupfs}, the previous result can be obtained from a purely combinatorial argument found by considering possible cases. The resulting count can be written as 
    \[\binom{n-1}{k}(i-1)^{n-1-k} (n-1)  (j-i-1)+ \binom{n-1}{k}(i-1)^{n-1-k}(j-1) +n\binom{n-1}{k-1}(i-1)^{n-k}.
\]
The first term of the above sum counts the possible $\bsy u$-parking functions when all of the $k$ lucky cars park in spot $i$ and the unlucky car in spot $j$ is not the $n$th car, and the second term counts the possible $\bsy u$-parking functions when all of the $k$ lucky cars park in spot $i$ and the unlucky car in spot $j$ is the $n$th car. Lastly, the third term counts the possible $\bsy u$-parking functions when $k-1$ lucky cars park in spot $i$ and one lucky car parks in spot $j$.

\end{remark}

\begin{remark}
In the case of $\bsy u = (1,2,\ldots, n)$, \Cref{thm:countforupfs} gives the number of classical parking functions with $k$ lucky cars, providing an alternative formula to compute the coefficient on $q^k$ in (\ref{eq:gesselseo}).
\end{remark}

\section{Further directions}\label{sec:future work}
We conclude with some directions for further study.
\begin{itemize}
    \item 
In \Cref{subsec: k cars lucky no rep}, we considered the case where the vector $\bsy u$ has no repetition and enumerated the possible outcomes that have $k$ lucky cars. 
It remains an open problem to count the outcomes with $k$ lucky cars for a general $\bsy u$. A place to start would be to consider cases where the vector $\bsy u$ has some repetitions, and count outcomes with a fixed lucky set or a fixed number of lucky cars.

\item In Section \ref{sec:one unavailable spot}, we considered streets with exactly one unavailable spot and no repetition, and we counted outcomes for a fixed set of lucky cars. 
We give an explicit formula for the case when the first spot is unavailable, and then recursive formulas for the cases when spot 2 or spot 3 is unavailable. Based on those results we remark that for a general unavailable spot $i>3$, it should be possible to obtain a recursive formula for the number of outcomes for a fixed set of lucky cars. It remains an open problem to find a closed formula for this count.

\item In \Cref{sec:uparkingfunctions},  we provide a formula for the number of vector parking functions with a fixed set of lucky cars. It would be of interest to determine closed formulas 
for the results in this section as well as for the specializations presented. 

\end{itemize}

One generalization of parking functions is the set of $(m,n)$-parking functions, where there are $m$ cars and $n$ parking spots, with $m\leq n$, and cars follow the standard parking rule to park on the street. 
It would be of interest to introduce a generalization of vector parking functions in which there is more capacity for cars than cars parking on the street. 
With such a generalization defined, one direction for further study is to characterize and enumerate the outcomes of these parking functions with a fixed set of lucky cars.

Another generalization of parking functions considers cars with various lengths, which are called parking sequences and parking assortments \cites{EhrenborgHapp,assortments}. The difference between sequences and assortments is the parking rule. In parking sequences, cars attempt to park in a contiguous section of the street, and if they fit they park; otherwise they cause a collision and exit the street. In parking assortments, the parking rule allows for the cars to seek forward in the street to find a contiguous segment of parking spots on the street in which the car can park. It would be of interest to define a version of vector parking sequences and vector parking assortments, where spots have a certain capacity and cars have specified lengths and parking preferences. With these generalizations, one could study the lucky statistic on these parking functions, as well as the possible outcomes that arise when a set of lucky cars is specified.

\section*{Acknowledgments}

L.~Martinez gratefully acknowledges William \& Mary for hosting her research visit, and acknowledges support by the National Science Foundation Graduate Research Fellowship Program under Grant No.~2233066. This work was also supported by a grant from the Simons Foundation (Travel Support for Mathematicians, P.~E.~Harris).

\bibliographystyle{plainurl}
\bibliography{references.bib}

\end{document}